\theoremstyle{definition}
\newtheorem{thm}{Theorem}[section]
\newtheorem{definition}[thm]{Definition}
\newtheorem{example}[thm]{Example}
\newtheorem{cor}[thm]{Corollary}
\newtheorem{lem}[thm]{Lemma}
\newtheorem{rem}[thm]{Remark}
\newtheorem{prop}[thm]{Proposition}
\newtheorem{question}[thm]{Question}
\title[Rokhlin dimension for correspondences]{Rokhlin dimension for $C^*$-correspondences}
\author{Nathanial P. Brown, Aaron Tikuisis, and Aleksey M. Zelenberg}
\date{} 
\begin{document}

\begin{abstract} 
We extend the notion of Rokhlin dimension from topological dynamical systems to $C^*$-correspondences. We show that in the presence of finite Rokhlin dimension and a mild quasidiagonal-like condition (which, for example, is automatic for finitely generated projective correspondences), finite nuclear dimension passes from the scalar algebra to the associated Toeplitz--Pimsner and (hence) Cuntz--Pimsner algebras. As a consequence we provide new examples of classifiable $C^*$-algebras: if $A$ is simple, unital, has finite nuclear dimension and satisfies the UCT, then for every finitely generated projective $\mathcal{H}$ with finite Rokhlin dimension, the associated Cuntz--Pimsner algebra $\mathcal{O} (\mathcal{H})$ is classifiable in the sense of Elliott's Program. 
\end{abstract} 

\thanks{N.B.\ and A.Z.\ were partially supported by NSF grant DMS-1201385. A.T.\ was partially supported by an NSERC Postdoctoral Fellowship and EPSRC grant EP/N00874X/1.}

\address{N.B.\ and A.Z.\ can be reached at Department of Mathematics, Penn State University, State College, PA 16802.} 
\address{A.T.\ can be reached at Department of Mathematical Sciences, Fraser Noble Building, Aberdeen, AB24 3UE. }

\maketitle

\section{Introduction}
The topological notion of covering dimension was extended to the noncommutative context by Winter and Zacharias in \cite{Winter-Zacharias1}. Their  \emph{nuclear dimension} has contributed to the most important and broadly applicable advances in the theory of nuclear $C^*$-algebras in the last 40 years. For example, the Toms--Winter conjecture asserts that finite nuclear dimension is often equivalent to structural properties analogous to those exploited by Connes in his proof of uniqueness of the injective II$_1$-factor (\cite{Connes}). In a remarkable breakthrough, this audacious conjecture was confirmed in the unique-trace case in \cite{Sato-White-Winter}, and has now been confirmed for much broader classes (see \cite{Brown-Bosa-Sato-Tikuisis-White-Winter}). As another stunning example, in 2015 finite nuclear dimension led to the completion of Elliott's Classification Program for the cases of most interest (cf.\ \cite{Elliott-Gong-Lin-Niu}, \cite{Tikuisis-White-Winter}). In short, nuclear dimension has revolutionized the field. 

It is thus important to know which examples have finite nuclear dimension. In the influential paper \cite{Matui-Sato} it was shown that all Kirchberg algebras have nuclear dimension at most three. (In fact, they have dimension one; see \cite{Brown-Bosa-Sato-Tikuisis-White-Winter} and \cite{Ruiz-Sims-Sorensen}.) In \cite{Szabo} Szabo proved that if $\mathbb{Z}^n$ acts freely on a compact metric space $X$ of finite covering dimension, then the associated crossed product $C(X) \rtimes \mathbb{Z}^n$ has finite nuclear dimension. His work was inspired by and depended upon \cite{Hirshberg-Winter-Zacharias}, where the classical measure-theoretic Rokhlin property was exported to the realm of topology. This so-called \emph{Rokhlin dimension} makes sense in the noncommutative context too, so we can study it for $C^*$-dynamical systems. There is mounting evidence that Rokhlin dimension is for $C^*$-dynamical systems what nuclear dimension is for $C^*$-algebras: ubiquitous and fundamental.

For example, it is intimately connected with nuclear dimension via the crossed product construction. One of the main results of \cite{Hirshberg-Winter-Zacharias} was that $A\rtimes_{\alpha} \mathbb{Z}$ has finite nuclear dimension whenever $A$ does and the automorphism $\alpha$ has finite Rokhlin dimension. Since $A\rtimes_{\alpha} \mathbb{Z}$ can be realized as a Cuntz--Pimsner algebra, it is natural to seek an extension of Rokhlin dimension to the context of $C^*$-correspondences, then ask whether the associated Toeplitz--Pimnser and/or Cuntz--Pimsner algebras have finite nuclear dimension? 

That is the subject of this paper. Indeed, we generalize Rokhlin dimension to $C^*$-correspondences in Definition \ref{Rokhlin-Dimension-Def}, then prove the following.

\begin{thm}\label{Main-Theorem}
Suppose that $\mathcal{H}$ is a countably generated $C^*$-cor\-res\-pon\-dence over a separable unital $C^*$-algebra $A$, satisfying a technical quasidiagonal-like condition (see Theorem \ref{main_thm}). Then 
$$
(\dim_{\mathrm{nuc}}(\mathcal{T}(\mathcal{H})) + 1) \le 2(\dim_{\mathrm{nuc}}(A)+1)(\dim_{\mathrm{Rok}}(\mathcal{H})+1).
$$
\end{thm}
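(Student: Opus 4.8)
The plan is to prove the nuclear dimension bound for $\mathcal{T}(\mathcal{H})$ by constructing, for a given finite subset $F \subseteq \mathcal{T}(\mathcal{H})$ and tolerance $\varepsilon > 0$, an approximating completely positive system with the required colour count. The natural building blocks are: (1) a completely positive approximation of the coefficient algebra $A$ with at most $\dim_{\mathrm{nuc}}(A)+1$ colours, and (2) the Rokhlin partition of unity coming from $\dim_{\mathrm{Rok}}(\mathcal{H})$, which should provide projections or positive elements in (a suitable corner or multiplier algebra of) $\mathcal{T}(\mathcal{H})$ that are approximately central, approximately orthogonal within each colour, and whose colour count is $\dim_{\mathrm{Rok}}(\mathcal{H})+1$. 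The factor of $2$ in the bound strongly suggests that the Rokhlin elements naturally come in two families — almost certainly a "source/tower" and a "range/remainder" pair, analogous to how in the crossed product $A \rtimes_\alpha \mathbb{Z}$ one uses two sets of Rokhlin towers with heights differing by one, or how in single-automorphism Rokhlin dimension arguments one splits into even and odd levels. I would take this structure as given from Definition \ref{Rokhlin-Dimension-Def}.

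**Main construction**

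First I would observe that $\mathcal{T}(\mathcal{H})$ is generated by $A$ together with the creation operators $T_\xi$ for $\xi \in \mathcal{H}$, so it suffices to approximate elements of the form $a_0 T_{\xi_1} \cdots T_{\xi_k} T_{\eta_\ell}^* \cdots T_{\eta_1}^* b_0$. Using the Rokhlin partition of unity for $\mathcal{H}$, I would conjugate/cut each such element by the Rokhlin elements. The key point — and here the technical quasidiagonal-like condition of Theorem \ref{main_thm} enters — is that cutting $\mathcal{T}(\mathcal{H})$ by the Rokhlin elements should, up to $\varepsilon$, land inside a subalgebra that is isomorphic (or close) to a matrix algebra over $A$, or more precisely to a "finite stage" $\bigoplus_{j=0}^{n} \mathcal{H}^{\otimes j}$ compressed appropriately; this is where a Toeplitz-type algebra collapses to something finite-dimensional-over-$A$. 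On each such piece the dynamics has been "unwound", so one only needs the finite-dimensional approximation of $A$ itself. Composing the $A$-approximation (with $\dim_{\mathrm{nuc}}(A)+1$ colours) with the $2$ families of Rokhlin elements (each contributing $\dim_{\mathrm{Rok}}(\mathcal{H})+1$ colours) and checking that the total system is completely positive, approximately multiplicative on $F$, and approximately contractive, yields a system with at most $2(\dim_{\mathrm{nuc}}(A)+1)(\dim_{\mathrm{Rok}}(\mathcal{H})+1)$ colours, which is the claimed bound after the standard "$+1$" bookkeeping.

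**The main obstacle**

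The hard part will be controlling the interaction between the creation operators and the Rokhlin elements: unlike the case of a single automorphism, where conjugation by the implementing unitary simply shifts the Rokhlin tower, here $T_\xi$ is only an isometry-like object (a non-unital, non-invertible creation operator), so "shifting" a Rokhlin partition of unity across $T_\xi$ produces error terms governed precisely by the quasidiagonal-like hypothesis. Making this rigorous — showing that $T_\xi^* f^{(j)} T_\xi$ and $T_\xi f^{(j)} T_\xi^*$ are, up to $\varepsilon$, expressible back in terms of the Rokhlin elements and elements of $A$, uniformly over the finite set in play — is the technical heart, and it is exactly what forces the "countably generated" and the quasidiagonal-type assumptions. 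A secondary subtlety is verifying that the candidate completely positive maps are genuinely order-zero on each colour (so that the colour count, rather than merely the number of summands, is what enters the nuclear dimension estimate); this should follow from approximate orthogonality of the Rokhlin elements within a colour combined with the order-zero structure of the $A$-approximation, but it requires care in the passage from exact to approximate orthogonality. Once $\mathcal{T}(\mathcal{H})$ is handled, the Cuntz--Pimsner statement in the abstract follows because $\mathcal{O}(\mathcal{H})$ is a quotient of $\mathcal{T}(\mathcal{H})$ and nuclear dimension does not increase under quotients.
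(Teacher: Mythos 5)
Your outline follows the same general blueprint as the paper (outgoing map into a ``finite stage'' algebra, incoming maps built from Rokhlin towers, colour count via Lemma \ref{nucdimtool}, and passing to $\mathcal O(\mathcal H)$ by taking a quotient), but as it stands there are genuine gaps, and in the places where you do commit to specifics you assign the technical hypothesis the wrong job. First, the codomain of the outgoing map is not ``a matrix algebra over $A$, or close to one'': for a general countably generated correspondence the relevant algebra is $D_p(\mathcal H)\subset\mathbb B(\mathcal F_p(\mathcal H))$, and bounding its nuclear dimension by $\dim_{\mathrm{nuc}}(A)$ is itself nontrivial. This is precisely where the quasidiagonal-like condition is used: one writes $D_{p+1}(\mathcal H)$ as an extension of $D_p(\mathcal H)$ by $\mathbb K(\mathcal F_{p+1}(\mathcal H))$ and needs a quasicentral approximate unit of \emph{projections} so that Proposition \ref{propextension} gives $\max$ rather than the sum-plus-one from the general extension estimate (Lemma \ref{quasi_hypothesis}). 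Your proposal instead spends the hypothesis on controlling error terms when ``shifting'' Rokhlin elements across $T_\xi$ --- but no such errors arise: condition (3) of Definition \ref{Rokhlin-Dimension-Def} gives $z\ldotp f_k^l=f_{k+|z|}^l\ldotp z$ exactly in the sequence algebra, so the commutation across creation operators is free and is not what the hypothesis is for. Without the nuclear dimension bound on $D_p(\mathcal H)$, the colour count $2(\dim_{\mathrm{nuc}}(A)+1)(\dim_{\mathrm{Rok}}(\mathcal H)+1)$ does not follow from your construction.

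Second, the incoming maps are the main technical content and your sketch does not construct them. In the paper they are c.p.c.\ order zero maps $D_p(\mathcal H)\to\mathcal T(\mathcal H)_\infty$ obtained by compressing with operators $R_{\mathcal G}=\sum_k g_k[T_\eta]_{W_k}$ built from row isometries over a \emph{free} correspondence (Lemma \ref{Rows-Vectors}, Corollary \ref{techcorone}, Lemma \ref{maps}), and one must pass through Kasparov stabilization (embedding $\mathcal H$ into a free $\mathcal K$, Proposition \ref{sumuniversecuntz}, Lemma \ref{embed}, Corollary \ref{spock_cor}) to make sense of this for a general countably generated $\mathcal H$; order zero and contractivity are then verified in Corollary \ref{lulu}, using orthogonality of the Rokhlin contractions. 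None of this is addressed by ``cutting by the Rokhlin elements.'' Relatedly, the outgoing map is not a cut-down by Rokhlin elements at all: it is compression by a bump-weighted cutoff $\sqrt\Delta P$ onto $\mathcal F_p(\mathcal H)$, and the factor $2$ arises because one needs two families of incoming maps, built from the tower and from its shift by $(p-1)/2$, weighted by the complementary bump functions $d_p(k)$ so that Lemma \ref{bumplemma} cancels the boundary effect of the cutoff --- not from towers of heights $p$ and $p+1$ or an even/odd splitting. So while your strategy points in the right direction, the proof is missing its two central constructions and misidentifies where the standing hypothesis enters.
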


The quasidiagonal-like condition is probably unnecessary, and would be if a certain algebra could be shown directly to have finite nuclear dimension (cf.\ Lemma \ref{quasi_hypothesis}). In any case, it is satisfied in many examples, including all finitely generated projective correspondences (see Example \ref{QD-examples}). 

As an application, we provide new examples of classifiable $C^*$-algebras in the sense of Elliott's Program. Thanks to \cite{Elliott-Gong-Lin-Niu} and \cite{Tikuisis-White-Winter}, this amounts to verifying simplicity, finite nuclear dimension and the Universal Coefficient Theorem (UCT) of  \cite{Rosenberg-Schochet}. 

\begin{cor} (Corollary \ref{application2}) \label{application}
Assume $A$ is simple, unital, satisfies the UCT and has finite nuclear dimension. For every finitely generated projective $\mathcal{H}$ with finite Rokhlin dimension, the associated Cuntz--Pimsner algebra $\mathcal{O}(\mathcal{H})$ is also simple, unital, satisfies the UCT and has finite nuclear dimension. 
\end{cor}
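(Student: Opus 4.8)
The plan is to deduce the four properties of $\mathcal{O}(\mathcal{H})$ one at a time, with the bulk of the work being to feed our hypotheses into Theorem \ref{Main-Theorem}. First, since $\mathcal H$ is finitely generated projective over the separable unital algebra $A$, it is in particular countably generated and, by Example \ref{QD-examples}, satisfies the technical quasidiagonal-like condition needed to invoke Theorem \ref{Main-Theorem}. Finite Rokhlin dimension is assumed, and $\dim_{\mathrm{nuc}}(A)<\infty$ by hypothesis, so the displayed inequality gives $\dim_{\mathrm{nuc}}(\mathcal T(\mathcal H))<\infty$. Then I would pass from the Toeplitz--Pimsner algebra to the Cuntz--Pimsner algebra: $\mathcal O(\mathcal H)$ is a quotient of $\mathcal T(\mathcal H)$ (when $\mathcal H$ is finitely generated projective the relevant ideal is the compacts, and the quotient map realizing $\mathcal O(\mathcal H)$ is the usual Cuntz--Pimsner quotient), and nuclear dimension does not increase under quotients, so $\dim_{\mathrm{nuc}}(\mathcal O(\mathcal H)) \le \dim_{\mathrm{nuc}}(\mathcal T(\mathcal H)) < \infty$.

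For unitality: when $A$ is unital and $\mathcal H$ is finitely generated projective, the left and right actions of $A$ interact with a finite Parseval-type frame so that the unit of $A$ is a unit for $\mathcal O(\mathcal H)$ as well; this is standard and I would just cite it. For the UCT, I would invoke the Pimsner six-term exact sequences relating $K_*(A)$ to $K_*(\mathcal T(\mathcal H))$ and $K_*(\mathcal O(\mathcal H))$ together with the fact that the UCT class is closed under extensions and under the relevant KK-equivalences; since $A$ satisfies the UCT and $\mathcal T(\mathcal H)$ is KK-equivalent to $A$, both $\mathcal T(\mathcal H)$ and $\mathcal O(\mathcal H)$ inherit the UCT. (Alternatively, $\mathcal O(\mathcal H)$ is nuclear and is built from $A$ by operations preserving the bootstrap class.)

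The step I expect to require the most care is simplicity of $\mathcal O(\mathcal H)$. This is not automatic from simplicity of $A$ alone; the standard criterion (Schweizer, and Katsura in the non-injective case) is that $\mathcal O(\mathcal H)$ is simple precisely when $\mathcal H$ is minimal (no nontrivial invariant ideals) and non-periodic. I would argue that finite Rokhlin dimension of $\mathcal H$ forces these two conditions: the tower/partition-of-unity structure witnessing low Rokhlin dimension gives enough room to rule out invariant ideals (any such ideal, being invariant under the correspondence, would have to absorb the Rokhlin elements and hence be everything) and to rule out periodicity (a periodicity relation is incompatible with the almost-orthogonality of the Rokhlin towers). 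Making this precise — essentially showing ``finite Rokhlin dimension $\Rightarrow$ minimal and aperiodic'' for correspondences over a simple algebra — is the main obstacle, and it is the analogue of the classical fact that a Rokhlin-type property for a single automorphism of a simple algebra yields outerness and hence simplicity of the crossed product. Once simplicity is in hand, combining it with unitality, the UCT and finite nuclear dimension and citing \cite{Elliott-Gong-Lin-Niu} and \cite{Tikuisis-White-Winter} yields classifiability, completing the proof.
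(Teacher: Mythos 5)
Your overall route coincides with the paper's: Theorem \ref{Main-Theorem} (whose technical hypothesis holds for finitely generated projective $\mathcal H$ by Example \ref{QD-examples}) gives $\dim_{\mathrm{nuc}}(\mathcal T(\mathcal H))<\infty$, passing to the quotient gives it for $\mathcal O(\mathcal H)$ (Corollary \ref{Main-Theorem_Cor}); the UCT is transferred via Pimsner's $KK$-equivalence of $\mathcal T(\mathcal H)$ with $A$ and the Cuntz--Pimsner extension (the paper's Lemma \ref{classif_lem_1} additionally verifies that the ideal satisfies the UCT, via a Morita equivalence with $A$ --- your sketch leaves this to ``closure under extensions'', which silently assumes the ideal is in the bootstrap class); and simplicity is reduced to Schweizer's criterion (Theorem \ref{thm:schweizer}). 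However, there is a genuine gap at precisely the step you flag as ``the main obstacle'': you never prove that finite Rokhlin dimension rules out periodicity, you only assert that a periodicity relation is ``incompatible with the almost-orthogonality of the Rokhlin towers''. That implication is the one piece of new content in the paper's Section \ref{classifiability} (Lemma \ref{classif_lem_2}), and without it simplicity of $\mathcal O(\mathcal H)$ --- and hence the corollary --- is not established. The argument is short but must be given: if $\mathcal H^{\otimes k}\approx A$ via an adjointable unitary bimodule map $U$, set $v=U^{-1}(1_A)$; since $\dim_{\mathrm{Rok}}(\mathcal H)\le d$ implies $\dim_{\mathrm{Rok}}(\mathcal H^{\otimes k})\le d$, choose Rokhlin contractions $\{f_1^l,f_2^l\}_{l=0,\dots,d}\subset A$ for towers of height $2$ with $\lVert f_1^lf_2^l\rVert<\epsilon$, $\lVert\sum_l(f_1^l+f_2^l)-1_A\rVert<\epsilon$ and $\lVert v\ldotp f_1^l-f_2^l\ldotp v\rVert<\epsilon$; applying the bimodule map $U$ to the last estimate yields $\lVert f_1^l-f_2^l\rVert<\epsilon$, which combined with the first estimate forces $\lVert f_i^l\rVert<\sqrt{2\epsilon}$, contradicting the second estimate once $\epsilon$ is small.

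A smaller inaccuracy: you propose to derive minimality of $\mathcal H$ from the Rokhlin structure (``any invariant ideal would have to absorb the Rokhlin elements''). Minimality concerns only ideals $J$ of $A$ with $\{\langle x,j\ldotp y\rangle : x,y\in\mathcal H,\ j\in J\}\subseteq J$, so it is automatic when $A$ is simple, as the paper notes; no Rokhlin input is needed there, and your heuristic as stated is not an argument. The Rokhlin hypothesis is needed only for nonperiodicity, i.e.\ exactly the step discussed above.
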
 

Note that we've substantially generalized the $C^*$-dynamical system case from \cite[Theorem 4.1]{Hirshberg-Winter-Zacharias} because $A\rtimes_{\alpha} \mathbb{Z}$ is the Cuntz--Pimsner algebra over a singly generated projective correspondence. Also, this corollary holds whenever Theorem \ref{Main-Theorem} does, so it's likely true for arbitrary correspondences of finite Rokhlin dimension (and definitely true when the quasidiagonal-like hypothesis is satisfied).

The proof of Theorem \ref{Main-Theorem} follows \cite{Hirshberg-Winter-Zacharias} very closely, at least in spirit. The main technical innovation is finding suitable replacements for the outgoing and incoming maps used in the proof of Theorem 4.1 in \cite{Hirshberg-Winter-Zacharias}. But we also have to worry about the nuclear dimension of the range of the outgoing maps, an easy task in the crossed-product case.  The majority of the paper is devoted to laying out these issues, and resolving them. Once that is done, the proof of finite nuclear dimension is very similar to \cite{Hirshberg-Winter-Zacharias}.

Here is an outline of what follows. In Section \ref{Preliminaries}, we establish notation and give relevant background information on nuclearity, nuclear dimension, Hilbert $C^*$-modules, $C^*$-correspondences, and Cuntz--Pimsner algebras. Section \ref{Rokhlin dimension for correspondences} is about the definition of Rokhlin dimension for $C^*$-correspondences. The heavy lifting is contained in Section \ref{Heavy lifting}, culminating with the proof of our main result (Theorem \ref{main_thm}). In Section \ref{Amalgamated}, we observe a cute application: in certain circumstances, reduced amalgamated free products have finite nuclear dimension. Finally, in Section \ref{classifiability}, we show how to use work of Schweizer (\cite{Schweizer}) to deduce Corollary \ref{application} from Theorem \ref{Main-Theorem}. 
\section{Preliminaries}\label{Preliminaries}
\subsection{Notation and terminology}

Throughout all that follows, we will use the following conventions, assumptions, and notation. 
When we refer to an \emph{ideal} of a $C^*$-algebra, we mean a closed, two-sided ideal. If $A$ is a $C^*$-algebra, let $A_+$, $\text{Ball}_1(A)$, $\mathcal{M}(A)$, and $\mathcal{Z}(A)$ denote the positive cone, unit ball, multiplier algebra, and center, respectively, of $A$. If $x,y\in A$ are self-adjoint, let $x\approx_{\epsilon}y$, $x\perp y$, and $x\perp_{\epsilon}y$  mean $\lVert x - y\rVert < \epsilon$, $xy = 0$, and $xy \approx_{\epsilon} 0$, respectively. If $C,D\subseteq A$, let $C'$ denote the commutant of $C$ in $A$ and let $C\subset_{\scriptscriptstyle{\epsilon}}D$ mean that for every contraction $c\in C$, there is a $d\in D$ satisfying $c\approx_{\epsilon} d$. 
If $\varphi:A \rightarrow B$ is a linear map to a $C^*$-algebra $B$, we will say that $\varphi$ is $\epsilon$-contractive if $\lVert \varphi\rVert < 1 + \epsilon$. We will use the abbreviation c.p.(c.) to mean completely positive (and contractive).
We write $\mathbb{K}$ for the compact operators on $\ell^2(\mathbb{N})$.
For a $C^*$-algebra $A$, we define
\[ A_\infty = \ell^2(\mathbb N,A) / c_0(\mathbb N,A), \]
and view $A$ as a subalgebra of $A_\infty$ in the canonical way (consisting of elements represented by constant sequences).
\subsection{Order zero maps and nuclear dimension}
Throughout this section let $A$ and $K$ be $C^*$-algebras. 

\begin{definition}\label{Order Zero Maps}
A c.p.\ map $\psi:K\rightarrow A$ is \emph{order zero} if it preserves orthogonality: for every pair of positive elements $x_1,x_2\in K$,
\begin{equation*}
x_1\perp x_2 \Rightarrow \psi(x_1) \perp \psi(x_1). 
\end{equation*}
\end{definition}

%
%
%

\begin{definition}
$A$ has \emph{nuclear dimension} at most $n$, written $\dim_{\mathrm{nuc}}(A) \le n$, if for every finite subset $F\subset A$ and $\epsilon > 0$, there is a finite-dimensional $C^*$-algebra $K = K^{(0)}\oplus\cdots\oplus K^{(n)}$, a c.p.c.\ map $\varphi: A\rightarrow K$, and a c.p.\ map $\psi: K \rightarrow A$ satisfying
\begin{enumerate}
\item $\lVert \psi\circ\varphi(a) - a\rVert < \epsilon$ for every $a\in F$, and
\item for each $i = 0,\ldots,n$, the restriction of $\psi$ to $K^{(i)}$ is contractive and order zero. 
\end{enumerate}
\end{definition}

%
%
The following is well-known and underpins many nuclear dimension computations in the literature.

\begin{lem}
\label{nucdimtool}
Fix $m,n \in \mathbb N$.
Let $A$ be a $C^*$-algebra.
Then $\dim_{\mathrm{nuc}}(A)\leq m(n+1)-1$ if, for every finite set $F \subset A$ and $\epsilon>0$, there exists a $C^*$-algebra $B$ of nuclear dimension at most $n$ and c.p.\ maps
\[ A \stackrel{\scriptstyle{\psi}}\longrightarrow B \stackrel{\scriptstyle{\phi}}\longrightarrow A_\infty \]
such that $\psi$ is c.p.c., $\phi$ is a sum of $m$ c.p.c.\ order zero maps, and $\|\psi\phi(a)-a\|<\epsilon$ for $a \in F$.
\end{lem}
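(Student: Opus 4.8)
The plan is to compose the given two-step approximation $A\xrightarrow{\psi}B\xrightarrow{\phi}A_\infty$ with approximations coming from $\dim_{\mathrm{nuc}}(B)\le n$, and then to keep track of orthogonality so that the composite map out of the resulting finite-dimensional algebra splits into exactly $m(n+1)$ order zero pieces. Fix a finite set $F\subset A$ and $\epsilon>0$. First I would apply the hypothesis with $\epsilon/2$ in place of $\epsilon$, getting a $C^*$-algebra $B$ with $\dim_{\mathrm{nuc}}(B)\le n$, a c.p.c.\ map $\psi\colon A\to B$, and c.p.c.\ order zero maps $\phi_1,\dots,\phi_m\colon B\to A_\infty$ whose sum $\phi$ satisfies $\|\phi\psi(a)-a\|<\epsilon/2$ for $a\in F$. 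Then I would apply the definition of nuclear dimension to $B$, the finite set $\psi(F)\subset B$, and $\delta:=\epsilon/(2m)$, obtaining a finite-dimensional $K=K^{(0)}\oplus\cdots\oplus K^{(n)}$, a c.p.c.\ map $\alpha\colon B\to K$, and a c.p.\ map $\beta\colon K\to B$ with $\beta_i:=\beta|_{K^{(i)}}$ c.p.c.\ order zero for each $i$, such that $\|\beta\alpha\psi(a)-\psi(a)\|<\delta$ for $a\in F$.

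Next comes the reorganization. Set $\tilde K:=\bigoplus_{i=0}^n\bigoplus_{j=1}^m K^{(i)}$, a finite-dimensional $C^*$-algebra with $m(n+1)$ summands, the $(i,j)$-th being a copy of $K^{(i)}$. Let $\iota\colon K\to\tilde K$ be the $*$-homomorphism placing the $i$-th coordinate of $K$ into each $(i,j)$-summand; as an injective $*$-homomorphism it is c.p.c., so $\tilde\varphi:=\iota\circ\alpha\circ\psi\colon A\to\tilde K$ is c.p.c. (The point is that the norm on a finite direct sum is the supremum of the norms of the summands, so this ``diagonal'' duplication does not inflate the norm.) Define $\tilde\psi\colon\tilde K\to A_\infty$ to be $\phi_j\circ\beta_i$ on the $(i,j)$-summand. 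Each such map is c.p.c.\ order zero: it is plainly c.p.\ and contractive, and if $x_1\perp x_2$ are positive in $K^{(i)}$ then $\beta_i(x_1)\perp\beta_i(x_2)$ are positive in $B$, so $\phi_j\beta_i(x_1)\perp\phi_j\beta_i(x_2)$ — i.e.\ a composition of c.p.c.\ order zero maps is again c.p.c.\ order zero. Finally, for $a\in F$,
\[
\tilde\psi\tilde\varphi(a)=\sum_{i=0}^n\sum_{j=1}^m\phi_j\beta_i\bigl((\alpha\psi(a))_i\bigr)=\sum_{j=1}^m\phi_j\bigl(\beta\alpha\psi(a)\bigr)=\phi(\beta\alpha\psi(a)),
\]
which, since $\|\phi\|\le m$, lies within $m\delta=\epsilon/2$ of $\phi\psi(a)$, hence within $\epsilon$ of $a$. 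Thus $\tilde\varphi$ and $\tilde\psi$ satisfy every requirement in the definition of $\dim_{\mathrm{nuc}}(A)\le m(n+1)-1$, except that $\tilde\psi$ maps into $A_\infty$ rather than $A$.

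To finish, I would remove the codomain $A_\infty$ using the standard fact that nuclear dimension is unchanged when the second map is allowed to land in $A_\infty$. Concretely: since each summand $\tilde K^{(i,j)}$ is finite-dimensional, the cone $C_0((0,1])\otimes\tilde K^{(i,j)}$ is projective, so the c.p.c.\ order zero map $\tilde\psi|_{\tilde K^{(i,j)}}\colon\tilde K^{(i,j)}\to A_\infty$ — equivalently, a $*$-homomorphism out of this cone — lifts to a sequence of c.p.c.\ order zero maps $\tilde K^{(i,j)}\to A$. Composing $\tilde\varphi$ with these lifts and using that the norm in $A_\infty$ is the $\limsup$ of the norms of representing sequences, I would obtain, for a large enough index, c.p.c.\ order zero maps $\sigma_{i,j}\colon\tilde K^{(i,j)}\to A$ with $\|\sum_{i,j}\sigma_{i,j}\tilde\varphi(a)-a\|<\epsilon$ for all $a\in F$. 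This verifies $\dim_{\mathrm{nuc}}(A)\le m(n+1)-1$.

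I expect the only genuine ingredient — everything else being composition of maps and the remark that direct-sum norms are suprema — to be this last passage from $A_\infty$ back to $A$, which rests on the liftability of c.p.c.\ order zero maps out of finite-dimensional $C^*$-algebras (projectivity of their cones). This is well known, so in practice one could instead simply cite a sequence-algebra characterization of nuclear dimension and skip the lifting argument entirely.
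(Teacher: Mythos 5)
Your proof is correct and follows essentially the same route as the paper: compose the hypothesized factorization through $B$ with a finite-dimensional approximation of $\mathrm{id}_B$ coming from $\dim_{\mathrm{nuc}}(B)\le n$, and regroup the compositions into $m(n+1)$ c.p.c.\ order zero summands. The only difference is cosmetic: where the paper disposes of the $A_\infty$ codomain by citing \cite[Proposition 2.5]{Tikuisis-Winter} (nuclear dimension of the inclusion $A\hookrightarrow A_\infty$), you reprove that standard reduction directly via projectivity of cones over finite-dimensional algebras, which is a perfectly valid substitute.
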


\begin{proof}
By \cite[Proposition 2.5]{Tikuisis-Winter}, we only need to show that the inclusion of $A$ into $A_\infty$ has nuclear dimension at most $(m+1)n-1$
To this end, let $F \subset A$ be finite and let $\epsilon>0$.
Find $B,\psi$, and $\phi$ as in the hypotheses, for the finite set $F$ and with $\epsilon/2$ in place of $\epsilon$.
Since $B$ has nuclear dimension at most $m$, we can factor the identity map on $B$, up to $\frac\epsilon{2n}$ on $\psi(F)$, as
\[ B \stackrel{\scriptstyle{\beta}}\longrightarrow K \stackrel{\scriptstyle{\alpha}}\longrightarrow B \]
such that $\beta$ is c.p.c.\ and $\alpha$ is a sum of $(m+1)$ c.p.c.\ order zero maps.
Then the inclusion $A \to A_\infty$ factors, up to $\epsilon$ on $F$, as
\[ A \stackrel{\scriptstyle{\beta\psi}}\longrightarrow K \stackrel{\scriptstyle{\phi\alpha}}\longrightarrow A_\infty, \]
where $\beta\psi$ is c.p.c., and $\phi\alpha$ decomposes as a sum of $n(m+1)$ c.p.c.\ order zero maps.
\end{proof}

If $J\unlhd A$ is an ideal, then by \cite[Proposition 2.9]{Winter-Zacharias2} we have 
\begin{equation*}
\dim_{\mathrm{nuc}}(A) \le \dim_{\mathrm{nuc}}(J) + \dim_{\mathrm{nuc}}(A/J) + 1. 
\end{equation*}
However a slight modification of the proof of \cite[Proposition 5.1]{Kirchberg-Winter} implies the following statement. 

\begin{prop}\label{propextension} 
If $J \unlhd A$ has a quasicentral approximate unit consisting of projections, then $\dim_{\mathrm{nuc}}(A) = \max\{\dim_{\mathrm{nuc}}(J),\dim_{\mathrm{nuc}}(A/J)\}$. 
\end{prop}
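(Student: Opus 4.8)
The plan is to establish the two inequalities $\dim_{\mathrm{nuc}}(A) \le \max\{\dim_{\mathrm{nuc}}(J), \dim_{\mathrm{nuc}}(A/J)\}$ and $\dim_{\mathrm{nuc}}(A) \ge \max\{\dim_{\mathrm{nuc}}(J), \dim_{\mathrm{nuc}}(A/J)\}$ separately. The second is immediate and requires no hypothesis on $J$: nuclear dimension does not increase when passing to quotients (apply a c.p.c.\ lift of an approximating map, or cite \cite[Proposition 2.3]{Winter-Zacharias2}) nor when passing to ideals (by the corresponding statement for hereditary subalgebras, again from \cite{Winter-Zacharias2}). So the content is the first inequality, and here is where the hypothesis that $J$ admits a quasicentral approximate unit of projections $(p_\lambda)$ is used, following the modification of \cite[Proposition 5.1]{Kirchberg-Winter}.

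For the upper bound, set $n = \max\{\dim_{\mathrm{nuc}}(J), \dim_{\mathrm{nuc}}(A/J)\}$ and fix a finite set $F \subset A$ and $\epsilon > 0$. First I would choose a projection $p = p_\lambda$ from the quasicentral approximate unit that nearly commutes with all $a \in F$ (so $\|pa - ap\| < \epsilon'$ for a suitably small $\epsilon'$) and such that $paп$ is within $\epsilon'$ of the ``$J$-part'' of $a$ in the sense that $a - (pap + (1-p)a(1-p))$ is small modulo being absorbed by the other corner; concretely $pap \in J$ and $(1-p)a(1-p)$ maps onto the image of $a$ in $A/J$ under the quotient. The key structural point is that because $p$ is a projection in $J$, the corner $pAp$ is contained in $J$ (as $J$ is an ideal and $p \in J$), hence has nuclear dimension at most $n$; and the complementary corner $(1-p)A(1-p)$ surjects onto $A/J$ with kernel $(1-p)J(1-p)$, so one arranges an approximation of the relevant elements of $(1-p)A(1-p)$ through $A/J$. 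Then I would take a c.p.c.\ approximation $\varphi_0 : pAp \to K_0$, $\psi_0 : K_0 \to pAp \subseteq A$ witnessing $\dim_{\mathrm{nuc}}(pAp) \le n$ on the finite set $\{pap : a \in F\}$, and similarly an approximation factoring through $A/J$ with maps into and out of a finite-dimensional $K_1 = K_1^{(0)} \oplus \cdots \oplus K_1^{(n)}$ (using that a c.p.c.\ order zero map out of a finite-dimensional algebra into $A/J$ lifts to a c.p.c.\ order zero map into $A$, by the projectivity of the cone, cf.\ \cite{Winter-Zacharias2}). Assembling these: the direct sum $K = K_0 \oplus K_1$ still has $n+1$ summands (not $2(n+1)$) precisely because the two pieces can be recombined summand-by-summand — the $i$-th summand of the output algebra receives $\psi_0^{(i)}$ composed with compression by $p$ plus $\psi_1^{(i)}$ composed with compression by $1-p$, and since $p \perp (1-p)$ and $p$ is central-ish, the sum of two order zero maps with orthogonal ranges is again order zero and contractive. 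The input map $A \to K$ is $a \mapsto (\varphi_0(pap)) \oplus (\varphi_1(\overline{(1-p)a(1-p)}))$, which is c.p.c.

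The main obstacle I anticipate is the bookkeeping that keeps the number of summands at $n+1$ rather than $2(n+1)$: one must verify that for each index $i$, the map $k \mapsto p\,\psi_0^{(i)}(k_0)\,p + (1-p)\,\psi_1^{(i)}(k_1)\,(1-p)$ is genuinely order zero and contractive, which relies on $p$ being a projection (so the two corners are orthogonal \emph{as subalgebras}) and on $p$ being quasicentral (so that conjugating by $p$ and $1-p$ is approximately multiplicative and does not destroy the order zero relation up to $\epsilon$). A secondary technical point is handling the cross terms $pa(1-p)$ and $(1-p)ap$: these are small because $p$ nearly commutes with $F$, so one absorbs them into the $\epsilon$ error at the very end. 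I would also need to be slightly careful that $\psi_0$ lands in $pAp$ and $\psi_1$'s lift lands close to $(1-p)A(1-p)$ so that the recombined map really does approximately reconstruct $a = pap + pa(1-p) + (1-p)ap + (1-p)a(1-p) \approx pap + (1-p)a(1-p)$ on $F$; this is routine once $p$ is chosen well, but it is the step where the quasicentrality is quantitatively consumed.
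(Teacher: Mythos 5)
Your overall route is the same one the paper intends: it gives no argument of its own but defers to a ``slight modification'' of \cite[Proposition 5.1]{Kirchberg-Winter}, and your skeleton (corner decomposition by a quasicentral projection $p\in J$, heredity of $pAp\subseteq J$ for the ideal colour, order zero liftability from $A/J$, colourwise recombination of the two corners to keep $\max+1$ colours, cross terms absorbed by quasicentrality) is exactly that argument. But as written there is one step that fails: you fix $p$ \emph{first} and only afterwards choose the approximation through $A/J$ and its order zero lift $\tilde\psi_1$. The lift reconstructs $a$ only modulo $J$: one has $\tilde\psi_1\varphi_1(\bar a)-a\approx j_a$ for some $j_a\in J$ depending on the chosen lift, and $(1-p)\,\tilde\psi_1\varphi_1(\bar a)\,(1-p)\approx(1-p)a(1-p)+(1-p)j_a(1-p)$. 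Quasicentrality (approximate commutation) gives no control whatsoever on $\|(1-p)j_a(1-p)\|$; what is needed is the \emph{approximate unit} property $\|(1-p)j_a\|$ small, and since $j_a$ depends on the lift, the projection must be chosen \emph{after} the quotient-side data. The correct ordering is: choose $\varphi_1,\psi_1$ and the order zero lifts first, then $p$ from the net so that it nearly commutes with $F$ and with the (compact) images of the unit balls of the lifted summands and nearly dominates the finitely many elements $j_a$, and only then choose the approximation of $\{pap: a\in F\}$ inside $pAp$. Attributing this step to ``quasicentrality being quantitatively consumed'' misidentifies which part of the hypothesis does the work.

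A secondary point: the maps $k\mapsto(1-p)\tilde\psi_1^{(i)}(k)(1-p)$ are only \emph{approximately} order zero (compression by a positive element does not preserve orthogonality of ranges; one only gets it up to the commutator defect), while the definition of nuclear dimension requires honestly order zero colours. This is repaired by the standard stability result for c.p.c.\ order zero maps with finite-dimensional domain (projectivity of the cone, as in \cite{Winter-Zacharias2}): an almost order zero map is uniformly close to an exactly order zero one, and the perturbation can be folded into $\epsilon$. With the reordering above and this perturbation made explicit, your proof goes through and coincides with the paper's intended argument.
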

\subsection{Correspondences}
Throughout this section let $A$ be a unital $C^*$-algebra. We give a brief overview of $C^*$-modules (all of which are assumed to be over $A$), $C^*$-correspondences, and Cuntz--Pimsner algebras. For comprehensive treatments, see \cite{Brown-Ozawa,Lance,Pimsner}. 

Let $\mathcal{H}$ be a Hilbert $C^*$-module. We say $\mathcal{H}$ is \emph{full} if the set $\langle \mathcal{H},\mathcal{H}\rangle : = \{\langle x,y\rangle \ : \ x,y\in \mathcal{H}\}$ is dense in $A$.We say $\mathcal{H}$ is \emph{free} if it has an orthonormal set of generators; it is \emph{finitely generated projective} if it is an orthogonal direct summand in a finitely generated free module. 

 If $\mathcal{K}$ is another $C^*$-module, we denote by $\mathbb{B}(\mathcal{H,K})$ and $\mathbb{K}(\mathcal{H,K})$ the adjointable and compact operators from $\mathcal{H}$ to $\mathcal{K}$, respectively. 
$\mathbb{K}(\mathcal{H,K})$ is the closed span of operators $e_{x,y}$ (over $x \in \mathcal K, y \in \mathcal H$), where
\begin{equation}
\label{RankOneFormula}
e_{x,y}(z) = x\ldotp \langle y,z\rangle_{\mathcal H}, \quad z \in \mathcal H.
\end{equation}
In the case $\mathcal{H = K}$, we write $\mathbb{B}(\mathcal{H})$ and $\mathbb{K}(\mathcal{H})$. Besides the operator norm topology, there is another natural topology on $\mathbb{B}(\mathcal{H,K})$: a sequence $T_n$ converges \emph{strictly} to $T$ if $T_n(x) \rightarrow T(x)$ and $T_n^*(y) \rightarrow T^*(y)$ for every $x\in \mathcal{H}$ and $y\in \mathcal{K}$. If $\{\mathcal{H}_i\}_{i\in I}$ is a collection of $C^*$-modules, their direct sum $\bigoplus \mathcal{H}_i$, defined as
\[ \{(x_i)_{i \in I} \in \prod_i \mathcal H_i \mid \sum \langle x_i, x_i\rangle \text{ converges in norm}\}, \]
is also a $C^*$-module. If $\mathcal{H}$ is a free $C^*$-module whose orthonormal generators are indexed by a set $I$, then $\mathcal{H}\cong \bigoplus_IA$.

We will need the following important theorem of Kasparov. 

\begin{thm}[Kasparov's stabilization theorem]\label{Kasparov's Stabilization Theorem}
If $\mathcal{H}$ is a countably generated, then there is a countably generated free $C^*$-module $\mathcal{H}'$ satisfying
\begin{equation*}
\mathcal{H}\oplus \mathcal{H}'\cong \mathcal{H}'.
\end{equation*}
\end{thm}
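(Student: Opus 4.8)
The plan is to take $\mathcal{H}'$ to be the \emph{standard module} $\mathcal{H}_A := \bigoplus_{n\in\mathbb{N}} A$. Since $A$ is unital this is a countably generated \emph{free} $C^*$-module --- its canonical basis vectors $e_n$ satisfy $\langle e_n,e_m\rangle = \delta_{nm}1_A$ --- so the statement reduces to the classical form of Kasparov's result, namely $\mathcal{H}\oplus\mathcal{H}_A\cong\mathcal{H}_A$.

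The first step is to reduce this to the assertion that $\mathcal{H}$ is an orthogonal direct summand of $\mathcal{H}_A$, say $\mathcal{H}_A\cong\mathcal{H}\oplus\mathcal{N}$. Granting that, I would finish with an Eilenberg--swindle: using $\mathcal{H}_A\cong\bigoplus_{\mathbb{N}}\mathcal{H}_A$ and $\mathcal{H}_A\cong\mathcal{H}\oplus\mathcal{N}$ one relabels so that
\[
\mathcal{H}\oplus\mathcal{H}_A \;\cong\; \mathcal{H}_{(0)}\oplus\bigoplus_{k\ge1}\bigl(\mathcal{H}_{(k)}\oplus\mathcal{N}_{(k)}\bigr) \;\cong\; \bigoplus_{j\ge0}\bigl(\mathcal{H}_{(j)}\oplus\mathcal{N}_{(j+1)}\bigr) \;\cong\; \bigoplus_{j\ge0}\mathcal{H}_A \;\cong\; \mathcal{H}_A ,
\]
where each $\mathcal{H}_{(k)}\cong\mathcal{H}$, each $\mathcal{N}_{(k)}\cong\mathcal{N}$, and the middle isomorphism merely re-pairs summands (a countable direct sum of Hilbert modules depends only on the multiset of its summands).

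To realize $\mathcal{H}$ as a summand of $\mathcal{H}_A$ it is enough to build an \emph{adjointable isometry} $V\colon\mathcal{H}\to\mathcal{H}_A$, i.e.\ $V^*V=\mathrm{id}_{\mathcal{H}}$ with $V$ adjointable: then $VV^*\in\mathbb{B}(\mathcal{H}_A)$ is a projection, $\mathrm{ran}(V)=VV^*\mathcal{H}_A$ is complemented, and $V$ is an isomorphism onto it, so we may take $\mathcal{N}=(1-VV^*)\mathcal{H}_A$. To produce $V$, I would start from a generating sequence $(\eta_n)_{n\ge1}$ for $\mathcal{H}$ with $\lVert\eta_n\rVert\le1$ in which each member of some countable generating set is repeated infinitely often, and form the map $T\colon\mathcal{H}_A\to\mathcal{H}$, $T\bigl((a_n)_n\bigr)=\sum_n 2^{-n}\eta_n a_n$, which is readily checked to be a well-defined adjointable contraction with $T^*\xi=(2^{-n}\langle\eta_n,\xi\rangle)_n$. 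This $T$ has dense range, and $h:=TT^*=\sum_n 4^{-n}e_{\eta_n,\eta_n}$ is a strictly positive element of $\mathbb{K}(\mathcal{H})$. The isometry $V$ is then assembled from $T$ together with a continuous-functional-calculus ``telescoping'' of $\mathrm{id}_{\mathcal{H}}$ along the spectral scales of $h$ --- this is precisely the argument of \cite[Theorem 6.2]{Lance}, which I would invoke.

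The main obstacle is this last assembly step. Over $\mathbb{C}$ one simply polar-decomposes $T$ and reads off $\mathcal{H}\cong(\ker T)^{\perp}$, a complemented subspace; but Hilbert $C^*$-modules admit neither a general polar decomposition nor an open mapping theorem, so dense range of $T$ does not by itself give a section of $T$ or a complemented range. Supplying the substitute --- the functional-calculus/telescoping device that upgrades the strictly positive $h\in\mathbb{K}(\mathcal{H})$ to the required adjointable isometry --- is the technical heart of Kasparov's theorem, and for the purposes of this paper it is cleanest to cite it from the literature rather than reproduce it.
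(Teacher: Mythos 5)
The paper does not prove this statement at all: it is Kasparov's classical stabilization theorem, quoted as background (the section points to \cite{Brown-Ozawa,Lance,Pimsner} for comprehensive treatments), so there is no in-paper argument to compare yours against, and a citation-level treatment is exactly what the authors intend. Within that framing your write-up is sound as far as it goes: taking $\mathcal H' = \mathcal H_A := \bigoplus_{\mathbb N}A$ is legitimate (free and countably generated since $A$ is unital), the Eilenberg swindle correctly converts a complemented embedding $\mathcal H_A \cong \mathcal H \oplus \mathcal N$ into $\mathcal H \oplus \mathcal H_A \cong \mathcal H_A$, and your formulas for $T$, $T^*$ and $h = TT^*$ are correct. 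Two caveats. First, Lance's Theorem 6.2 \emph{is} the stabilization theorem, so invoking it for the ``assembly'' of the isometry $V$ makes your argument a citation of the statement itself; that matches the paper's own treatment and is fine here, but it means the swindle reduction carries no independent logical weight. Second, if you ever wanted to make the step self-contained, note that your operator $T\colon \mathcal H_A \to \mathcal H$, $(a_n)\mapsto \sum_n 2^{-n}\eta_n\ldotp a_n$, is not the operator used in the standard proof and is not sufficient on its own: Kasparov/Lance work with $T\colon \mathcal H_A \to \mathcal H \oplus \mathcal H_A$, $\epsilon_n \mapsto 2^{-n}\eta_n \oplus 4^{-n}\epsilon_n$, where the extra diagonal component forces $T^*T$ to dominate a strictly positive diagonal operator, so that both $T$ and $|T|$ have dense range and $|T|x \mapsto Tx$ extends to a unitary $\mathcal H_A \to \mathcal H \oplus \mathcal H_A$ directly --- no isometry-plus-swindle step is then needed. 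With your $T$ alone, $|T| \in \mathbb B(\mathcal H_A)$ can fail to have dense range (already for $\mathcal H = A$ with all $\eta_n = 1$ it is a rank-one operator), so the functional-calculus ``telescoping'' you allude to does not literally apply to it.
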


Kasparov's stabilization theorem implies $\mathbb{K}(\mathcal{H})$ is isomorphic to a hereditary subalgebra of $A\otimes \mathbb{K}$. By \cite[Proposition 2.5]{Winter-Zacharias2}, this implies 
\begin{equation}\label{Compact-Dimension}
\dim_{\mathrm{nuc}}(\mathbb{K}(\mathcal{H})) \le \dim_{\mathrm{nuc}}(A). 
\end{equation}

 We say $\mathcal{H}$ is a \emph{$C^*$-correspondence} (or simply a correspondence) if there is a unital injective *-homomorphism $\omega: A\rightarrow \mathbb{B}(\mathcal{H})$. The map $\omega$ is called the left action of $A$ on $\mathcal{H}$ and we will write $\omega_a(x)$ as $a\ldotp x$. The simplest example is the \emph{identity correspondence}, which is the $C^*$-module $A$ (over $A$) with the left action given by left multiplication.  If $\{\mathcal{H}_i\}_{i\in I}$ is a collection of correspondences, their direct sum $\bigoplus \mathcal{H}_i$ is also a correspondence with the left action given by $\omega_{\bigoplus \mathcal{H}_i} = \bigoplus\omega_{\mathcal{H}_i}$. 

We say two correspondences $\mathcal{H}$ and $\mathcal{K}$ are \emph{unitarily equivalent} (and write $\mathcal{H}\approx \mathcal{K}$) if there exists an adjointable map $U:\mathcal{H} \rightarrow \mathcal{K}$ such that 
\begin{enumerate}
\item $U(a\ldotp x) = a\ldotp U(x)$ for every $x\in \mathcal{H}$ and $a\in A$, and 
\item $U^*U = \text{id}_\mathcal{H}$, and $UU^* = \text{id}_{\mathcal{K}}$. 
\end{enumerate}

The algebraic tensor product $\mathcal{H}\odot\mathcal{K}$ of two correspondences $\mathcal{H}$ and $\mathcal{K}$ is naturally a right $A$-module with an $A$-valued semi-inner product given by 
\begin{equation}
\label{TensorIP}
\langle x_1\otimes y_1,x_2\otimes y_2\rangle = \langle y_1, \langle x_1,x_2\rangle\ldotp y_2\rangle. 
\end{equation}
Denote by $\mathcal{H}\otimes \mathcal{K}$ the $C^*$-module obtained from $\mathcal{H}\odot \mathcal{K}$ by separation and completion.
In $\mathcal H \otimes \mathcal K$, the following identity holds
\begin{equation}
\label{TensorBalance}
x \otimes (a\ldotp y) = (x\ldotp a) \otimes y, \quad x \in \mathcal H, y \in \mathcal K, a \in A.
\end{equation}
There is an injective *-homomorphism $\mathbb{B}(\mathcal{H}) \rightarrow \mathbb{B}(\mathcal{H}\otimes \mathcal{K})$ given by $T\mapsto T\otimes 1$, where $(T\otimes 1)(x\otimes y) = Tx\otimes y$ (and a *-homomorphism $A'\cap \mathbb{B}(\mathcal{K}) \rightarrow \mathbb{B}(\mathcal{H}\otimes \mathcal{K})$ given by $T\mapsto 1\otimes T$). In particular $\mathcal{H}\otimes \mathcal{K}$ is a correspondence, called the \emph{interior tensor product} of $\mathcal{H}$ and $\mathcal{K}$. Elements of the form $h\otimes k \in \mathcal{H}\otimes \mathcal{K}$ are called \emph{elementary tensors}.

If $\mathcal H$ is a correspondence over a $C^*$-algebra $A$, define
\[ \mathcal H_\infty = \ell^\infty(\mathbb N, \mathcal H)/c_0(\mathbb N,\mathcal H), \]
and one can easily check that this is a correspondence over the sequence algebra $A_\infty$ in the obvious way.
It contains $\mathcal H$ as the subset consisting of elements with constant sequence representatives.
In particular, the product (in either order) of an element of $A_\infty$ and an element of $\mathcal H$ makes sense as an element of this $\mathcal H_\infty$.

For a correspondence $\mathcal H$ over a $C^*$-algebra $A$, a \emph{representation} of $\mathcal{H}$ on a $C^*$-algebra $B$ is a pair $(\pi,\tau)$ consisting of a *-homomorphism $\pi: A\rightarrow B$ and a linear map $\tau: \mathcal{H} \rightarrow B$ satisfying $\tau(a\ldotp x\ldotp b) = \pi(a)\tau(x)\pi(b)$ and $\tau(x)^*\tau(y) = \pi(\langle x,y\rangle)$ for every $x,y\in \mathcal{H}$ and $a,b\in A$. Denote by $C^*(\pi,\tau)$ the $C^*$-subalgebra of $B$ generated by $\pi(A)$ and $\tau(\mathcal{H})$. 
We say a representation $(\pi,\tau)$ \emph{admits a gauge action} if there is an action $\beta$ of $\mathbb{T} = \{z\in \mathbb{C} \ : \ \lvert z\rvert = 1\}$ on $C^*(\pi,\tau)$ such that $\beta_z(\pi(a)) = \pi(a)$ and $\beta_z(\tau(\xi)) = z\tau(\xi)$ for all $a\in A$ and $\xi\in \mathcal{H}$.
\subsection{Cuntz--Pimsner algebras}
We now briefly review the construction of Cuntz--Pimsner algebras. These were first defined by Pimsner in \cite{Pimsner}. Note that \cite{Pimsner} contains  the proofs of Theorems \ref{Toeplitz-Properties} and \ref{Cuntz-Universal}, and Proposition \ref{useful_cuntz}. See also \cite[Section 4.6]{Brown-Ozawa} 

For a single  correspondence $\mathcal{H}$, set $\mathcal{H}^{\otimes 0} = A$ and $\mathcal{H}^{\otimes k} = \mathcal{H}\otimes \cdots \otimes \mathcal{H}$. The \emph{full Fock space} and \emph{$p^{\text{th}}$-cutoff Fock space} over $\mathcal{H}$ are the correspondences defined by
\begin{equation*}
\mathcal{F}(\mathcal{H}) = \bigoplus_{k = 0}^{\infty}\mathcal{H}^{\otimes k} \ \ \ \text{and} \ \ \ \mathcal{F}_p(\mathcal{H}) = \bigoplus_{k=0}^{p-1}\mathcal{H}^{\otimes k}. 
\end{equation*}
We say an elementary tensor $x = x_1\otimes\cdots\otimes x_k \in \mathcal{H}^{\otimes k}\subset\mathcal{F}(\mathcal{H})$ has \emph{length $k$} and write $\lvert x\rvert = k$.  
When $\mathcal H$ is a free module with orthonormal basis $\{\xi_i\}_{i \in I}$, we set $W$ equal to the set of all elementary tensors in the $\xi_i$, and
\begin{equation}
\label{W_kDef}
 W_k = \{\mu \in W \mid |\mu| = k\}, \quad W_{<p} = \{\mu \in W \mid |\mu| < p\}.
\end{equation}
The left action of $A$ on $\mathcal{F}(\mathcal{H})$ is given by $a\ldotp (x_1\otimes\cdots\otimes x_k) = (a\ldotp x_1)\otimes \cdots \otimes x_k$. Moreover for each $x \in \mathcal{H}$ we define the \emph{creation} operator $T_x\in \mathbb{B}(\mathcal{F}(\mathcal{H}))$ by $T_x(a) = x\ldotp a$ and $T_x(x_1\otimes \cdots\otimes x_n) = x\otimes x_1\otimes\cdots\otimes x_n$. If $a\in\mathcal{H}^{\otimes 0}$ set $T_a = a$, and if $x = x_1\otimes\cdots\otimes x_k\in \mathcal{H}^{\otimes k}$ set $T_x = T_{x_1}\cdots T_{x_k}$. Note that for an elementary tensor $y$,
\[ T_x^*(y) = \begin{cases} \langle x,y'\rangle. y'', \quad &y=y' \otimes y'', |y|=|x|; \\ 0, \quad &|y|<|x|. \end{cases} \]
Using rank-one operators $e_{x,y}\in \mathbb{K}(\mathcal{H}^{\otimes \lvert x\rvert}, \mathcal{H}^{\otimes \lvert y\rvert})$ associated to elementary tensors in $\mathcal{F}(\mathcal{H})$, we have the identity
\begin{equation}\label{Strict-Expansion}
T_xT_y^* = \sum_{k = 0}^{\infty}e_{x,y}\otimes 1_{\mathcal{H}^{\otimes k}},
\end{equation}
where convergence is understood to be strict. 

\begin{definition}\label{Toeplitz-Pimsner Algebra}
Let $\mathcal{H}$ be a correspondence over $A$. The \emph{Toeplitz--Pimsner algebra} $\mathcal{T}(\mathcal{H})$ is the $C^*$-subalgebra of $\mathbb{B}(\mathcal{F}(\mathcal{H}))$ generated by $A$ and $\{T_x \ | \ x\in \mathcal{H}\}$. 
\end{definition}

\begin{thm}\label{Toeplitz-Properties}
Let $\mathcal{T}(\mathcal{H})$ be the Toeplitz--Pimsner algebra of a correspondence $\mathcal{H}$ over $A$. 
\begin{enumerate}
\item For every $\alpha\in \mathbb{C}$, $x,y\in \mathcal{H}$, and $a,b\in A$, the creation operators satisfy
\begin{equation}
\label{Tequations}
T_{\alpha x + y} = \alpha T_x + T_y, \ \ \ T_{a\ldotp x\ldotp b} = aT_xb, \ \ \ T_x^*T_y = \langle x,y \rangle.
\end{equation}
In particular,
\begin{equation}
\label{Tspan}
\mathcal{T}(\mathcal{H}) = \overline{\mathrm{span}\{T_xT_y^* \mid x,y \in \mathcal F(\mathcal H) \text{ elementary tensors}\}}.
\end{equation}
\item (\cite{Fowler-Muhly-Raeburn}) 
The Toeplitz--Pimsner algebra $\mathcal T(\mathcal H)$ is the unique $C^*$-algebra (up to isomorphism) generated by a representation $(\pi,\tau)$ of $\mathcal H$, such that 
\begin{equation*}
\pi(A) \cap \overline{\mathrm{span}\{\tau(x)\tau(y)^* \mid x,y \in \mathcal H\}} = 0,
\end{equation*}
and which admits a gauge action.
\end{enumerate} 
\end{thm}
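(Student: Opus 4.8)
The plan is to handle the two parts by entirely different means: part (1) is a direct computation on the Fock space, while part (2) I would simply attribute to \cite{Fowler-Muhly-Raeburn}. Starting with part (1), the three identities in \eqref{Tequations} unwind straight from the definition of the creation operators. Additivity, $T_{\alpha x+y}=\alpha T_x+T_y$, is immediate from bilinearity of the tensor product and $\mathbb{C}$-linearity of the module maps. For $T_{a\ldotp x\ldotp b}=aT_xb$, evaluate on an elementary tensor $z$ of length $\ge 1$: the right side gives $a\ldotp(x\otimes(b\ldotp z))$, which by the balancing identity \eqref{TensorBalance} equals $a\ldotp((x\ldotp b)\otimes z)=(a\ldotp x\ldotp b)\otimes z$, the left side; the $\mathcal{H}^{\otimes 0}=A$ summand is an even easier direct check. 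For $T_x^*T_y=\langle x,y\rangle$, use the stated formula for $T_x^*$ on elementary tensors: $T_x^*T_y(z)=T_x^*(y\otimes z)=\langle x,y\rangle\ldotp z$ for $z$ of length $\ge 1$, while $T_x^*T_y(a)=T_x^*(y\ldotp a)=\langle x,y\ldotp a\rangle=\langle x,y\rangle a$, so $T_x^*T_y$ is precisely the image of $\langle x,y\rangle\in A$ in $\mathbb B(\mathcal F(\mathcal H))$.

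For the spanning assertion \eqref{Tspan}, since $\mathcal T(\mathcal H)$ is by definition the closed subalgebra of $\mathbb B(\mathcal F(\mathcal H))$ generated by $A=\{T_a:a\in\mathcal H^{\otimes 0}\}$ and the creation operators $T_x$, it suffices to show that $S:=\mathrm{span}\{T_\mu T_\nu^*\mid \mu,\nu\text{ elementary tensors in }\mathcal F(\mathcal H)\}$ is a self-adjoint subalgebra containing these generators; then $\overline S=\mathcal T(\mathcal H)$. Self-adjointness is visible, and $T_x=T_xT_{1_A}^*$ and $a=T_aT_{1_A}^*$ lie in $S$ (with $1_A$ the unit of $A=\mathcal H^{\otimes 0}$). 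Closure under multiplication is the only computation worth writing out: in a product $T_\mu T_\nu^*T_\alpha T_\beta^*$ one rewrites the middle factor $T_\nu^*T_\alpha$ by comparing $|\nu|$ and $|\alpha|$. If $|\nu|\le|\alpha|$, split $\alpha=\alpha'\otimes\alpha''$ with $|\alpha'|=|\nu|$, so that $T_\nu^*T_\alpha=T_\nu^*T_{\alpha'}T_{\alpha''}=\langle\nu,\alpha'\rangle T_{\alpha''}$ (using $T_{\sigma\otimes\tau}=T_\sigma T_\tau$ and the length-$|\nu|$ analogue of the third identity in \eqref{Tequations}, obtained by iteration), which by \eqref{Tequations} is again a single $T$ of an elementary tensor; absorbing it into $T_\mu$ on the left, the whole product collapses to an element of $S$. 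The case $|\nu|>|\alpha|$ is symmetric, absorbing the leftover adjointed creation operators into $T_\beta^*$. The only subtlety is bookkeeping for length-$0$ tensors, where $T_a=a$ and the manipulations degenerate to ordinary products in $A$.

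Part (2) I would obtain by quoting \cite{Fowler-Muhly-Raeburn} directly; for a self-contained argument the standard route is a gauge-invariant uniqueness theorem. Given a representation $(\pi,\tau)$ with the stated properties, the universal property of the Fock representation yields a gauge-equivariant surjection $\mathcal T(\mathcal H)\to C^*(\pi,\tau)$; averaging over the circle action produces faithful conditional expectations onto the fixed-point subalgebras of both algebras, so injectivity reduces to the fixed-point level, and there the hypothesis $\pi(A)\cap\overline{\mathrm{span}}\{\tau(x)\tau(y)^*\mid x,y\in\mathcal H\}=0$ is exactly what drives the inductive identification of the fixed-point algebra with the corresponding corner of $\mathbb B(\mathcal F(\mathcal H))$. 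I expect part (1) to be purely mechanical; the only genuine obstacle is the bookkeeping in part (2) --- matching the Fowler--Muhly--Raeburn hypotheses to the present formulation and verifying that the averaging argument is injective --- which is why I would prefer to cite it.
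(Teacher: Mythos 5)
Your proposal is correct and matches the paper's treatment: the paper gives no proof of this background theorem, simply citing Pimsner (and Fowler--Muhly--Raeburn for part (2)), and your Fock-space verification of \eqref{Tequations} together with the reduction of products $T_\mu T_\nu^* T_\alpha T_\beta^*$ to the spanning form \eqref{Tspan} is the standard argument and is sound. Deferring part (2) to \cite{Fowler-Muhly-Raeburn} is exactly what the paper does; if you ever wanted it self-contained, note that ``the universal property of the Fock representation'' must be routed through the universal Toeplitz algebra and a gauge-invariant uniqueness theorem, since otherwise that phrase is essentially the statement being proved.
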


The Toeplitz--Pimsner algebra is too large for many purposes, so we define the Cuntz--Pimsner algebra $\mathcal{O}(\mathcal{H})$ to be a natural quotient of $\mathcal{T}(\mathcal{H})$. Denote by $J(\mathcal{H})$ the $C^*$-subalgebra of $\mathbb{B}(\mathcal{F}(\mathcal{H}))$ generated by
\begin{equation*}
\bigcup_{n=0}^\infty \mathbb{B}(\bigoplus_{k=0}^n \mathcal{H}^{\otimes k}).
\end{equation*}
The multiplier algebra $\mathcal{M}(J(\mathcal{H}))$ can be identified with all $T\in \mathbb{B}(\mathcal{F}(\mathcal{H}))$ satisfying both $TJ(\mathcal{H}) \subset J(\mathcal{H})$ and $J(\mathcal{H})T\subset J(\mathcal{H})$. In particular, there is there is an inclusion $\mathcal{T}(\mathcal{H}) \subset \mathcal{M}(J(\mathcal{H}))$. (However, note that $\mathcal{T}(\mathcal{H})$ may not contain $J(\mathcal H)$.)

\begin{definition}
\label{CPalgebra}
The \emph{Cuntz--Pimsner algebra} $\mathcal{O}(\mathcal{H})$ is the $C^*$-algebra $Q(\mathcal{T}(\mathcal{H}))$, where $Q:\mathcal{M}(J(\mathcal{H})) \rightarrow \mathcal{M}(J(\mathcal{H}))/J(\mathcal{H})$ is the quotient map. We denote by $S_x$ the image of the creation operator $T_x$ under $Q$. 
\end{definition}

Here is another description of $\mathcal{O}(\mathcal{H})$. Let $I_{\mathcal{H}} = A\cap \mathbb{K}(\mathcal{H}) \subset \mathbb{B}(\mathcal{H})$. Since $I_{\mathcal{H}}$ is an ideal in $A$ and $\mathcal{F}(\mathcal{H})I_{\mathcal{H}}$ is a $\mathbb{B}(\mathcal{F}(\mathcal{H}))$-invariant subcorrespondence of $\mathcal{F}(\mathcal{H})$, we can conclude $\mathbb{K}(\mathcal{F}(\mathcal{H})I_{\mathcal{H}}) = \overline{\mathrm{span}}\{e_{x,y} \ | \ x,y\in \mathcal{F (\mathcal{H})I_{\mathcal{H}}\}}$ is an ideal in $\mathbb{B}(\mathcal{F}(\mathcal{H}))$. 

\begin{prop}\label{useful_cuntz}
$\mathbb{K}(\mathcal{F}(\mathcal{H})I_{\mathcal{H}}) \subset\mathcal{T}(\mathcal{H})$, and in particular $\mathbb{K}(\mathcal{F}(\mathcal{H})I_{\mathcal{H}}) = \ker Q|_{\mathcal{T}(\mathcal{H})}$. In other words, 
\begin{equation*}
\mathcal{O}(\mathcal{H}) \cong \mathcal{T}(\mathcal{H})/\mathbb{K}(\mathcal{F}(\mathcal{H})I_{\mathcal{H}}). 
\end{equation*}
\end{prop}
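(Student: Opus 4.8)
The goal is to prove Proposition \ref{useful_cuntz}: that $\mathbb{K}(\mathcal{F}(\mathcal{H})I_{\mathcal{H}}) \subseteq \mathcal{T}(\mathcal{H})$, and that it coincides with the kernel of $Q|_{\mathcal{T}(\mathcal{H})}$, yielding $\mathcal{O}(\mathcal{H}) \cong \mathcal{T}(\mathcal{H})/\mathbb{K}(\mathcal{F}(\mathcal{H})I_{\mathcal{H}})$.

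My plan is as follows. First I would establish the containment $\mathbb{K}(\mathcal{F}(\mathcal{H})I_{\mathcal{H}}) \subseteq \mathcal{T}(\mathcal{H})$ by showing that a spanning set of $\mathbb{K}(\mathcal{F}(\mathcal{H})I_{\mathcal{H}})$ lies in $\mathcal{T}(\mathcal{H})$. The algebra $\mathbb{K}(\mathcal{F}(\mathcal{H})I_{\mathcal{H}})$ is the closed span of rank-one operators $e_{\xi, \eta}$ with $\xi, \eta \in \mathcal{F}(\mathcal{H})I_{\mathcal{H}}$; since $\mathcal{F}(\mathcal{H})I_{\mathcal{H}} = \overline{\operatorname{span}}\{\zeta \ldotp k : \zeta \in \mathcal{F}(\mathcal{H}), k \in I_{\mathcal{H}}\}$, it suffices to handle $e_{x \ldotp k_1, y \ldotp k_2}$ for elementary tensors $x, y \in \mathcal{F}(\mathcal{H})$ and $k_1, k_2 \in I_{\mathcal{H}} = A \cap \mathbb{K}(\mathcal{H})$. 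The key computation is to relate such an operator to the creation operators. Because $k_1 \in \mathbb{K}(\mathcal{H})$, it is a norm-limit of finite sums of rank-one operators $e_{u,v}$ with $u, v \in \mathcal{H}$, and one has $T_u T_v^* = \sum_{k=0}^{\infty} e_{u,v} \otimes 1_{\mathcal{H}^{\otimes k}}$ strictly by \eqref{Strict-Expansion}. The point is that $e_{x,y} \otimes 1_{\mathcal{H}^{\otimes k}}$ for fixed lengths can be isolated from these strict sums, and by multiplying on the left and right by the appropriate creation operators $T_x, T_y^*$ (and using that $I_{\mathcal{H}}$ acts as compacts), one expresses $e_{x \ldotp k_1, y \ldotp k_2}$ as a genuine norm-convergent expression in elements of $\mathcal{T}(\mathcal{H})$. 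Concretely: $T_x (\text{something in } A \cap \mathbb{K}(\mathcal{H}) \subseteq \mathcal{T}(\mathcal{H})) T_y^*$ should produce the desired rank-one operator on the Fock space, using that the left action embeds $A$ in $\mathcal{T}(\mathcal{H})$ and $I_{\mathcal{H}} \subseteq A$.

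Next I would identify $\mathbb{K}(\mathcal{F}(\mathcal{H})I_{\mathcal{H}})$ with $\ker Q|_{\mathcal{T}(\mathcal{H})}$. Recall $J(\mathcal{H})$ is generated by $\bigcup_n \mathbb{B}(\bigoplus_{k=0}^n \mathcal{H}^{\otimes k})$ and $Q$ is the quotient by $J(\mathcal{H})$; so $\ker Q|_{\mathcal{T}(\mathcal{H})} = \mathcal{T}(\mathcal{H}) \cap J(\mathcal{H})$. One inclusion is immediate from the previous paragraph once one checks $\mathbb{K}(\mathcal{F}(\mathcal{H})I_{\mathcal{H}}) \subseteq J(\mathcal{H})$: indeed $\mathcal{F}(\mathcal{H})I_{\mathcal{H}}$ is built from the $\mathcal{H}^{\otimes k}I_{\mathcal{H}}$, and since $k_1 \in \mathbb{K}(\mathcal{H})$ the rank-one operators $e_{x \ldotp k_1, y \ldotp k_2}$ compress the Fock space to finitely many summands up to norm approximation — more precisely $e_{x \ldotp k_1, y \ldotp k_2}$ maps $\mathcal{H}^{\otimes n}$ into $\mathcal{H}^{\otimes |x|}$ and, because the "$k_1$" factor is compact on the last tensor leg, it is approximable within the finite-summand algebras $\mathbb{B}(\bigoplus_{k=0}^n \mathcal{H}^{\otimes k})$. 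For the reverse inclusion $\mathcal{T}(\mathcal{H}) \cap J(\mathcal{H}) \subseteq \mathbb{K}(\mathcal{F}(\mathcal{H})I_{\mathcal{H}})$, I would use the gauge action: $\mathcal{T}(\mathcal{H})$ is gauge-invariant, $J(\mathcal{H})$ is gauge-invariant, and on the fixed-point-type filtration given by "diagonal" elements $T_x T_y^*$ with $|x| = |y|$, an element of the intersection must, after applying the conditional expectation coming from the gauge action, be supported on finitely many tensor powers; one then checks such a gauge-invariant element of $J(\mathcal{H})$ lying in $\mathcal{T}(\mathcal{H})$ is a compact operator on $\mathcal{F}(\mathcal{H})I_{\mathcal{H}}$, using that the relevant finite-rank-type operators coming from $T_x T_y^*$ land in $\mathbb{K}(\mathcal{F}(\mathcal{H})I_{\mathcal{H}})$ because $I_{\mathcal{H}} = A \cap \mathbb{K}(\mathcal{H})$ is exactly the ideal where the left action becomes compact.

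The main obstacle I anticipate is the second inclusion, $\mathcal{T}(\mathcal{H}) \cap J(\mathcal{H}) \subseteq \mathbb{K}(\mathcal{F}(\mathcal{H})I_{\mathcal{H}})$ — i.e., showing the kernel is no bigger than the claimed ideal. The subtlety is that an element $T$ of $\mathcal{T}(\mathcal{H})$ lying in $J(\mathcal{H})$ need not a priori look like a compact operator on the sub-$C^*$-module $\mathcal{F}(\mathcal{H})I_{\mathcal{H}}$; one must genuinely use that the generator $a$ of $\mathcal{T}(\mathcal{H})$ appearing in $T$ only contributes compact behaviour when $a \in I_{\mathcal{H}}$, and that elements $T_x T_y^*$ with $|x| = |y| > 0$ automatically live in $\mathbb{K}(\mathcal{F}(\mathcal{H}))$ and, more refined, restrict nicely to $\mathcal{F}(\mathcal{H})I_{\mathcal{H}}$. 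Since this is a standard (if somewhat technical) fact about Cuntz–Pimsner algebras, I would, as the excerpt suggests, mostly cite \cite{Pimsner} (and \cite[Section 4.6]{Brown-Ozawa}) for the core of this identification, and only spell out the pieces specific to our normalization — chiefly the explicit containment $\mathbb{K}(\mathcal{F}(\mathcal{H})I_{\mathcal{H}}) \subseteq \mathcal{T}(\mathcal{H})$ via \eqref{Strict-Expansion} — in full detail.
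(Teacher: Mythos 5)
The paper does not actually prove this proposition: it is quoted as background, with the proof attributed to \cite{Pimsner} (see also \cite[Section 4.6]{Brown-Ozawa}), so there is no in-paper argument to compare against, and your plan --- prove the containment $\mathbb{K}(\mathcal{F}(\mathcal{H})I_{\mathcal{H}}) \subset \mathcal{T}(\mathcal{H})$ by hand and defer the identification of $\ker Q|_{\mathcal{T}(\mathcal{H})}$ (the gauge/conditional-expectation argument) to Pimsner and Brown--Ozawa --- is entirely in line with, indeed slightly more detailed than, the paper's treatment; your outline is the standard one.

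One point in your first step needs sharpening. For $a \in I_{\mathcal{H}}$, the operator $T_x a T_y^*$ is \emph{not} the rank-one operator $e_{x\ldotp a, y}$: by \eqref{Strict-Expansion} it is the full strict sum $\sum_{k\ge 0} e_{x\ldotp a, y}\otimes 1_{\mathcal{H}^{\otimes k}}$, so the phrase ``$T_x(\text{something in } A\cap\mathbb{K}(\mathcal{H}))T_y^*$ produces the desired rank-one operator'' is not right as stated. The way the rank-one piece is isolated is the following. Approximate $a$ (viewed in $\mathbb{K}(\mathcal{H})$) in norm by finite sums $\sum_i e_{u_i,v_i}$ with $u_i,v_i\in\mathcal{H}$, and observe that $a - \sum_i T_{u_i}T_{v_i}^*$ agrees with $\bigl(a-\sum_i e_{u_i,v_i}\bigr)\otimes 1_{\mathcal{H}^{\otimes(k-1)}}$ on each $\mathcal{H}^{\otimes k}$ with $k\ge 1$, while it equals multiplication by $a$ on $\mathcal{H}^{\otimes 0}=A$; hence these differences converge in norm to $aP_0$, where $P_0\in\mathbb{B}(\mathcal{F}(\mathcal{H}))$ is the projection onto $\mathcal{H}^{\otimes 0}$. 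Thus $aP_0\in\mathcal{T}(\mathcal{H})$, and a direct computation gives $e_{x\ldotp a, y} = T_x (aP_0) T_y^* \in \mathcal{T}(\mathcal{H})$; since $e_{x\ldotp a, y\ldotp b}=e_{x\ldotp (ab^*),y}$ and $ab^*\in I_{\mathcal{H}}$, such operators span a dense subspace of $\mathbb{K}(\mathcal{F}(\mathcal{H})I_{\mathcal{H}})$. With this correction your step (1) is complete, and citing \cite{Pimsner} for the remaining equality is exactly what the paper itself does.
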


\begin{thm}\label{Cuntz-Universal}
Let $\mathcal{O}(\mathcal{H})$ be the Cuntz--Pimsner algebra of a correspondence $\mathcal{H}$ over $A$. 
For every $\alpha \in \mathbb{C},x,y\in \mathcal{H}$, and $a,b\in A$, the following identity holds:
\begin{equation*}
S_{\alpha x + y} = \alpha S_x + S_y, \ \ \ S_{a\ldotp x\ldotp b} = aS_xb, \ \ \ S_x^*S_y = \langle x,y\rangle. 
\end{equation*}
\end{thm}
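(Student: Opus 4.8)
The plan is to obtain all three relations by pushing the corresponding Toeplitz--Pimsner identities through the quotient map. Recall from Definition \ref{CPalgebra} that $S_x = Q(T_x)$, where $Q\colon \mathcal{M}(J(\mathcal{H})) \to \mathcal{M}(J(\mathcal{H}))/J(\mathcal{H})$ is the quotient $*$-homomorphism and $\mathcal{T}(\mathcal{H}) \subset \mathcal{M}(J(\mathcal{H}))$; hence also $S_x^* = Q(T_x^*)$, and as usual we write $a$ for $Q(a)$ when $a \in A \subset \mathcal{T}(\mathcal{H})$.

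First I would invoke Theorem \ref{Toeplitz-Properties}(1), which supplies the identities $T_{\alpha x + y} = \alpha T_x + T_y$, $T_{a\ldotp x\ldotp b} = aT_xb$, and $T_x^*T_y = \langle x,y\rangle$ inside $\mathcal{T}(\mathcal{H})$. Since $Q$ is linear, multiplicative and $*$-preserving, applying it to these three identities produces exactly $S_{\alpha x+y} = \alpha S_x + S_y$, $S_{a\ldotp x\ldotp b} = aS_xb$, and $S_x^*S_y = Q(T_x^*T_y) = Q(\langle x,y\rangle) = \langle x,y\rangle$, as claimed.

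There is essentially no obstacle here: the content is entirely contained in Theorem \ref{Toeplitz-Properties}(1) (proved in \cite{Pimsner}) together with the functoriality of the quotient construction. The only point worth noting is the harmless identification of $A$ (and $\mathcal{H}$) with its image under $Q$ in $\mathcal{O}(\mathcal{H})$; in particular, one does not need $Q|_A$ to be injective for the stated equalities to make sense, since the right-hand side $\langle x,y\rangle$ is read as its image $Q(\langle x,y\rangle)$.
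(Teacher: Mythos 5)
Your argument is correct: since Definition \ref{CPalgebra} defines $S_x = Q(T_x)$ with $Q$ a $*$-homomorphism, applying $Q$ to the relations of Theorem \ref{Toeplitz-Properties}(1) yields the stated identities, with $a$ and $\langle x,y\rangle$ read as their images under $Q$. The paper itself gives no proof (it cites \cite{Pimsner}), and your quotient-map argument is precisely the standard one intended by that setup, so there is nothing to add.
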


\begin{rem}\label{Cuntz=Pimsner}
If $\mathcal{H}$ is a finitely generated projective  correspondence, then $\mathbb{K}(\mathcal{H}) = \mathbb{B}(\mathcal{H})$ (see \cite{Wegge-Olsen}) and hence $A\cap \mathbb{K}(\mathcal{H}) = A$. Since $A$ is unital, $\mathbb{K}(\mathcal{F}(\mathcal{H})I_{\mathcal{H}}) = \mathbb{K}(\mathcal{F}(\mathcal{H}))$. This shows $\mathcal{T}(\mathcal{H})$ contains all of $\mathbb{K}(\mathcal{F}(\mathcal{H}))$ and $\mathcal O(\mathcal H) \cong \mathcal T(\mathcal H)/\mathbb K(\mathcal F(\mathcal H))$.  At the other extreme, if $A\cap\mathbb{K}(\mathcal{H}) = \{0\}$, the kernel of $Q$ is trivial and hence there is a *-isomorphism $\mathcal{O}(\mathcal{H}) \rightarrow \mathcal{T}(\mathcal{H})$ sending $a$ to $a$ and $S_x$ to $T_x$. 
\end{rem}

%
\section{Rokhlin dimension for $C^*$-correspondences}\label{Rokhlin dimension for correspondences}
Here is our definition of Rokhlin dimension for $C^*$-correspondences.

\begin{definition}\label{Rokhlin-Dimension-Def}
Let $A$ be a separable $C^*$-algebra and let $\mathcal{H}$ be a countably generated correspondence over $A$.
We say that $\mathcal{H}$ has \emph{Rokhlin dimension at most $d$}, $\dim_{\mathrm{Rok}}(\mathcal{H}) \le d$, if any $p\in \mathbb{N}$, there exist positive contractions 
\begin{equation*}
\{f_k^l\}_{l=0,\dots,d;\, k \in \mathbb Z/p} \subset A_\infty \cap A'
\end{equation*}
satisfying 
\begin{enumerate}
\item $f_k^lf_{k'}^l=0$ for all $l$ and all $k \ne k'$,
\item $\sum_{k,l}f_k^l = 1$, and
\item $z\ldotp f_k^l = f_{k+1}^l\ldotp z$ in $\mathcal H_\infty$, for all $k$, $l$, and $z \in \mathcal H$.
\end{enumerate}
\end{definition}

\begin{rem}\label{Rokhlin-Def-Rem}
(i)
One can of course reformulate this definition without using the sequence algebra $A_\infty$ and the sequence correspondence $\mathcal H_\infty$.
Namely, we have $\dim_{\mathrm{Rok}}(\mathcal H) \leq d$ if and only if, for any $\epsilon > 0$, any $p\in \mathbb{N}$, any finite set $F\subset A$, and any finite set $\mathcal{V} \subset \mathcal{H}$, there exist positive contractions 
\begin{equation*}
\{f_k^l\}_{l=0,\dots,d;\, k \in \mathbb Z/p} \subset A
\end{equation*}
satisfying 
\begin{enumerate}
\item $\lVert f_k^lf_{k'}^l\rVert < \epsilon$ when $k \ne k'$ and all $l$. 
\item $\lVert \sum_{k,l}f_k^l - 1\rVert < \epsilon$. 
\item $\lVert z\ldotp f_k^l - f_{k+1}^l\ldotp z\rVert < \epsilon$ for all $k$, $l$, and $z \in \mathcal{V}$.
\item $\lVert [f_k^l,a]\rVert < \epsilon$ for all $k,l$ and $a\in F$. 
\end{enumerate}
(ii)
If $A$ is a $C^*$-algebra and $\alpha \in \text{Aut}(A)$, one may define a correspondence $A^\alpha$ over $A$ as the singly generated $C^*$-module $A$ with with the left action given by $a\ldotp b = \alpha(a)b$.
There is a canonical isomorphism $\mathcal O(A^\alpha) \cong A \rtimes_\alpha \mathbb Z$ (this isomorphism fixes $A$ and sends $S_1$ to the canonical unitary of the crossed product).
Our definition of Rokhlin dimension of $C^*$-correspondences is designed to (almost) coincide with the Rokhlin dimension of $\alpha$ (with single towers) as defined by Hirshberg, Winter, and Zacharias in \cite[Definition 2.3(c)]{Hirshberg-Winter-Zacharias}.
Specifically, we ask for single towers all of height $p+1$ (whereas their definition of $\dim_{\mathrm{Rok}}^s(A,\alpha)\leq d$ asks that each colour has a single tower of height either $p$ or $p+1$).
In \cite[Proposition 2.8 and Remark 2.9]{Hirshberg-Winter-Zacharias}, they show that, up to a possible factor of $2$, their Rokhlin dimension coincides with the version with single towers all of height $p+1$.
The same argument applies to variants on the definition of Rokhlin dimension for correspondences, and as such, we have chosen to work with the simplest version of Rokhlin dimension.

(iii) 
We can simultaneously express condition (3) and the requirement that the Rokhlin contractions commute with $A$ by asking that, for any elementary tensor $z \in \mathcal F(\mathcal H)$,
\[ z\ldotp f_k^l = f_{k+\lvert z\rvert}^l\ldotp z. \]
\end{rem}

\section{Nuclear dimension of Toeplitz--Pimsner and Cuntz--Pimsner algebras}\label{Heavy lifting}
In \cite[Theorem 4.1]{Hirshberg-Winter-Zacharias}, it is shown that for a $C^*$-algebra $A$ with finite nuclear dimension and an automorphism $\alpha:A \to A$ of finite Rokhlin dimension, the crossed product $A \rtimes_\alpha \mathbb Z$ has finite nuclear dimension.
In this section we generalize this result to correspondences of finite Rokhlin dimension (this is truly a generalization, see Remark \ref{Rokhlin-Def-Rem} (i)), subject to a technical condition which is satisfied, for example, by correspondences which are finitely generated and projective as Hilbert $C^*$-modules.

One can recast the argument used to prove \cite[Theorem 4.1]{Hirshberg-Winter-Zacharias} in terms of the Fock-space representation of $\mathcal{T}(A^\alpha)$, where $A^\alpha$ is as in Remark \ref{Rokhlin-Def-Rem} (i).
Specifically, the argument makes use of outgoing maps from $\mathcal T(A^\alpha)$ to a compression of the Fock space representation; the range of these maps land in a subalgebra of $\mathbb B(\mathcal F(A^\alpha))$ which, in this case, is isomorphic to some $M_n(A)$.
It is possible to then define incoming maps $M_n(A) \to \mathcal T(A^\alpha)$, using compressions by row vectors corresponding to Rokhlin towers.
Our argument for general correspondences is based on this outline; however, there are two significant technical differences.
First, the codomain of the outgoing map will generally not be a matrix algebra over $A$, and so further input is needed to get a (uniform) bound on its nuclear dimension.
Second, there need to be suitable replacements for the row vector compressions used to construct the incoming maps.
In this section, we deal with these technicalities.

\subsection{A compressed Fock space representation and nuclear dimension}
Let $\mathcal{H}$ be a countably generated correspondence over $A$. For each $p \in \mathbb{N}$, form
\[ \mathcal{F}_p(\mathcal{H}) = \bigoplus_{k=0}^{p-1}\mathcal{H}^{\otimes k}. \]
Set
\begin{align*}
D_p(\mathcal{H}) = \mathrm{span}\{e_{x,y} \otimes 1_{\mathcal H}^{\otimes k} \mid & x,y \in \mathcal F_p(\mathcal H) \text{ elementary tensors}, \\ &\quad \max\{|x|,|y|\}+k<p\}.
\end{align*}
This is a $C^*$-subalgebra of $\mathbb B(\mathcal F_p(\mathcal H))$.
Evidently, $\mathbb{K}(\mathcal{F}_p(\mathcal{H})) $ is an ideal of $D_p(\mathcal{H})$. 


\begin{lem}\label{quasi_hypothesis}
Suppose that for every $p\in \mathbb{N}$, there is an approximate unit consisting of projections in $\mathbb{K}(\mathcal{F}_p(\mathcal{H}))$ that are quasicentral in $D_p(\mathcal{H})$. Then for every $p\in \mathbb{N}$ we have
\begin{equation*}
\dim_{\mathrm{nuc}}(D_p(\mathcal{H})) \le \dim_{\mathrm{nuc}}(A). 
\end{equation*}
\end{lem}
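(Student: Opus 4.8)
The plan is to identify $D_p(\mathcal H)$ as an extension of $\mathbb K(\mathcal F_p(\mathcal H))$ by a quotient that is itself controlled by $A$, and then to invoke Proposition \ref{propextension}. By hypothesis, $\mathbb K(\mathcal F_p(\mathcal H)) \unlhd D_p(\mathcal H)$ has a quasicentral approximate unit consisting of projections, so Proposition \ref{propextension} gives
\[ \dim_{\mathrm{nuc}}(D_p(\mathcal H)) = \max\{\dim_{\mathrm{nuc}}(\mathbb K(\mathcal F_p(\mathcal H))),\ \dim_{\mathrm{nuc}}(D_p(\mathcal H)/\mathbb K(\mathcal F_p(\mathcal H)))\}. \]
The first term is handled by \eqref{Compact-Dimension}: since $\mathcal F_p(\mathcal H)$ is a finite direct sum of the countably generated modules $\mathcal H^{\otimes k}$, it is countably generated, so $\dim_{\mathrm{nuc}}(\mathbb K(\mathcal F_p(\mathcal H))) \le \dim_{\mathrm{nuc}}(A)$. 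It remains to show the quotient $D_p(\mathcal H)/\mathbb K(\mathcal F_p(\mathcal H))$ has nuclear dimension at most $\dim_{\mathrm{nuc}}(A)$.

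The key structural observation is that the quotient map kills precisely the "diagonal-supported" generators and leaves a direct sum of algebras acting on the top tensor powers. Concretely, filter $D_p(\mathcal H)$ by the ideals $I_m = \mathrm{span}\{e_{x,y}\otimes 1_{\mathcal H}^{\otimes k} : \max\{|x|,|y|\}+k < p,\ |x|+k,|y|+k \le m\}$ (operators landing in $\bigoplus_{j\le m}\mathcal H^{\otimes j}$), so that $I_{p-1} = \mathbb K(\mathcal F_p(\mathcal H))$. I would instead argue more directly: an operator $e_{x,y}\otimes 1_{\mathcal H}^{\otimes k}$ with $|x| = |y| = j$ maps $\mathcal H^{\otimes(j+i)}$ into $\mathcal H^{\otimes(j+i)}$ for each $i\le k$; after quotienting by $\mathbb K(\mathcal F_p(\mathcal H))$, what survives is the part of $D_p(\mathcal H)$ that is "compact away from the top". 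More precisely, I expect $D_p(\mathcal H)/\mathbb K(\mathcal F_p(\mathcal H))$ to be isomorphic to a direct sum, over $j = 1,\dots,p-1$, of hereditary subalgebras of $\mathbb K(\mathcal H^{\otimes j})$ (arising from the action on $\mathcal H^{\otimes(p-1)}$ via $T\otimes 1^{\otimes(p-1-j)}$, modulo lower-order corrections), together with a copy of $A$ from the scalar-length piece. Each such summand is a hereditary subalgebra of $\mathbb K$ of a countably generated module over $A$, hence has nuclear dimension at most $\dim_{\mathrm{nuc}}(A)$ by \eqref{Compact-Dimension}; a finite direct sum of such has the same bound (nuclear dimension of a finite direct sum is the maximum). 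Combining the two terms in the Proposition \ref{propextension} identity yields the claim.

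The main obstacle is pinning down the quotient $D_p(\mathcal H)/\mathbb K(\mathcal F_p(\mathcal H))$ precisely — untangling how the generators $e_{x,y}\otimes 1_{\mathcal H}^{\otimes k}$ of different lengths interact, and verifying that modulo $\mathbb K(\mathcal F_p(\mathcal H))$ the algebra genuinely decomposes as the asserted direct sum of module-compact algebras rather than something with a nontrivial extension structure of its own. An alternative that sidesteps this is to iterate: observe that $D_p(\mathcal H)$ itself has a finite filtration by ideals $\mathbb K(\mathcal F_m(\mathcal H)) \cap D_p(\mathcal H)$ whose successive quotients are (hereditary subalgebras of) module-compact algebras over $A$, each of nuclear dimension $\le \dim_{\mathrm{nuc}}(A)$, and whose inclusions admit quasicentral approximate units of projections (inherited from the hypothesis); then repeated application of Proposition \ref{propextension} gives the bound directly without ever naming the quotient. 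Either route reduces everything to \eqref{Compact-Dimension} and the behaviour of nuclear dimension under the extensions supplied by Proposition \ref{propextension}.
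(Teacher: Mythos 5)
Your frame---apply Proposition \ref{propextension} to the ideal $\mathbb{K}(\mathcal{F}_p(\mathcal{H})) \unlhd D_p(\mathcal{H})$ and handle that ideal via \eqref{Compact-Dimension}---matches the paper's starting point, but the step you leave open (identifying the quotient) is the real content of the lemma, and both structural guesses you offer for it fail. The quotient $D_p(\mathcal{H})/\mathbb{K}(\mathcal{F}_p(\mathcal{H}))$ is not a direct sum of hereditary subalgebras of the $\mathbb{K}(\mathcal{H}^{\otimes j})$ together with a copy of $A$: already for $p=2$ the generators with $k=1$ are the operators $e_{a,b}\otimes 1_{\mathcal{H}}$, which act as the left action $\omega_{ab^*}$ on the length-one summand (and as $0$ on the length-zero summand), so $D_2(\mathcal{H})=C^*\bigl(\mathbb{K}(\mathcal{F}_2(\mathcal{H}))\cup\{0\oplus\omega_c : c\in A\}\bigr)$ and the quotient is $\omega(A)/\bigl(\omega(A)\cap\mathbb{K}(\mathcal{H})\bigr)\cong A/I_{\mathcal{H}}$---every module-compact piece dies in the quotient, and what survives is a quotient of $A$, not compacts plus a copy of $A$. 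For larger $p$ the quotient retains a genuine iterated-extension structure (it is, up to a further quotient, the previous-stage algebra $D_{p-1}(\mathcal{H})$), which is exactly the possibility you flag as the obstacle. Your fallback filtration fails as well: for $m<p$, $\mathbb{K}(\mathcal{F}_m(\mathcal{H}))$ is not an ideal of $D_p(\mathcal{H})$, since a length-raising generator $e_{x,y}\otimes 1_{\mathcal{H}^{\otimes k}}$ with $|x|>|y|$ multiplies it into compact operators whose range lies in $\mathcal{H}^{\otimes(|x|+k)}$ with $|x|+k\geq m$ possible, hence outside $\mathbb{K}(\mathcal{F}_m(\mathcal{H}))$; even granting a filtration, the subquotients are not module-compact algebras (same $p=2$ computation), and the hypothesis only supplies quasicentral projections for $\mathbb{K}(\mathcal{F}_q(\mathcal{H}))$ inside $D_q(\mathcal{H})$, so ``inheriting'' them elsewhere would itself need proof. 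Precision matters here: falling back on the general extension inequality instead of Proposition \ref{propextension} at the lower levels would give a bound growing with $p$, which is useless for Theorem \ref{main_thm}, where $p\to\infty$.

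The missing idea---and the paper's proof---is an induction on $p$ via the level shift $T\mapsto T\otimes 1_{\mathcal{H}}$. Every generator of $D_{p+1}(\mathcal{H})$ with $k=0$ lies in $\mathbb{K}(\mathcal{F}_{p+1}(\mathcal{H}))$, and every generator with $k\geq 1$ lies in $\tilde{D}_p=\overline{\mathrm{span}}\{(e_{x,y}\otimes 1_{\mathcal{H}^{\otimes k}})\otimes 1_{\mathcal{H}}\}$, which is a copy of $D_p(\mathcal{H})$; hence $D_{p+1}(\mathcal{H})$ is generated by the ideal $\mathbb{K}(\mathcal{F}_{p+1}(\mathcal{H}))$ together with $\tilde{D}_p$, and the quotient by the compacts is (a quotient of) $D_p(\mathcal{H})$. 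Starting from $D_1(\mathcal{H})\cong A$, the inductive hypothesis bounds the quotient, \eqref{Compact-Dimension} bounds the ideal, and Proposition \ref{propextension}---invoked at each stage, using the hypothesised quasicentral projection approximate units at that stage---yields $\dim_{\mathrm{nuc}}(D_{p+1}(\mathcal{H}))\le\dim_{\mathrm{nuc}}(A)$. So the correct description of the quotient is not a direct sum of module-compact pieces but (up to a quotient) the previous $D$-algebra, and the argument must be run inductively rather than as a single application of Proposition \ref{propextension}.
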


\begin{proof}
By induction in $p$.
It is not hard to see that $D_1(\mathcal{H}) \cong A$. Suppose the result holds for $D_{i}$ for $i = 1,\ldots,p$. Set 
\begin{equation*}
\tilde{D}_p = \overline{\mathrm{span}}\{(e_{x,y}\otimes 1_{\mathcal{H}^{\otimes k}})\otimes 1_{\mathcal{H}} \ : 0 \le \max\{\lvert x\rvert ,\lvert y\rvert\} + k < p\} \subset D_{p+1}(\mathcal{H}). 
\end{equation*}
It's clear that $\tilde{D}_p\cong D_p(\mathcal{H})$ and that
\begin{equation*}
D_{p+1}(\mathcal{H}) = C\text{*}(\mathbb{K}(\mathcal{F}_{p+1}(\mathcal{H})) \cup \tilde{D}_p) \ \ \ \text{and} \ \ \ \mathbb{K}(\mathcal{F}_{p+1}(\mathcal{H}))\cap\tilde{D}_p = \{0\}.
\end{equation*}
This shows that $D_{p+1}(\mathcal{H})$ is an extension of (an algebra isomorphic to) $D_p(\mathcal{H})$ by the compacts $\mathbb{K}(\mathcal{F}_{p+1}(\mathcal{H}))$.
The result follows by induction, Proposition \ref{propextension}, and 
By the inductive hypothesis and \eqref{Compact-Dimension}, both of these algebras have nuclear dimension at most $\dim_{\mathrm{nuc}}(A)$.
By the hypotheses of this lemma, we may apply Proposition \ref{propextension} to conclude that the nuclear dimension of $D_{p+1}(\mathcal H)$ is also at most $\dim_{\mathrm{nuc}}(A)$.
\end{proof}

\begin{example}
\label{QD-examples}
(i)
If $\mathcal{H}$ is a finitely generated projective correspondence, then so is $\mathcal{H}^{\otimes k}$ for any $k \ge 0$ (see \cite[Proposition 4.7]{Lance}). Hence, $\mathbb{K}(\mathcal{F}_p(\mathcal{H}))$ is unital for any $p\in \mathbb{N}$ and we get 
\begin{equation*}
\mathbb{K}(\mathcal{F}_p(\mathcal{H})) = D_p(\mathcal{H}) = \mathbb{B}(\mathcal{F}_p(\mathcal{H})).
\end{equation*}
Thus, such $\mathcal H$ does satisfy the condition of Lemma \ref{quasi_hypothesis} (i.e., for every $p$, there is an approximate unit of projections in $\mathbb K(\mathcal F_p(\mathcal H))$ which is quasicentral in $D_p(\mathcal H)$).

(ii)
Let $\mathcal{K}$ be countably generated free Hilbert $A$-module with orthonormal basis $(\xi_i)_{i=1}^\infty$.
Let $\{\alpha_i\}_{i=1}^\infty \subset \mathrm{Aut}(A)$ be a sequence of automorphisms of $A$, and define a correspondence by the left action $a\ldotp \xi_i = \xi_i\ldotp \alpha_i(a)$ for $i \in \mathbb N$.
This correspondence also satisfies the hypothesis of Lemma \ref{quasi_hypothesis}.

\begin{proof}
For $\mu = \xi_1\cdots \xi_k \in W_k$, set 
\[ \alpha_{\mu} = \alpha_{k} \circ \cdots \circ \alpha_{1} \in \mathrm{Aut}(A), \]
and note that $a.\mu = \mu.\alpha_\mu(a)$.

Let $W_{<p}^n$ denote the finite subset of $W_{<p}$ consisting of elementary tensors of length $<p$ in $\xi_1,\dots,\xi_n$.
Define the finite rank projection $q_n = \sum_{\zeta \in W_{<p}^n}e_{\zeta,\zeta}$.
This projection is quasicentral in $D_p(\mathcal{K})$, since for $\mu,\nu \in W$ and $k \in \mathbb N$, for sufficiently large $n$ (namely, when $n$ is greater than all indices of basis elements appearing in $\mu$ and $\nu$), we have
\begin{align*}
q_n(e_{\mu\ldotp a,\nu}\otimes 1_{\mathcal{K}^{\otimes k}}) & = 
\sum_{\zeta \in W_{<p}^n}e_{\zeta,\zeta}\sum_{\eta \in W_k} e_{\mu.a\otimes \eta, \nu\otimes \eta} \\
&= \sum_{\zeta \in W_{<p}^n}e_{\zeta,\zeta}\sum_{\eta\in W_k}e_{\mu\otimes \eta\ldotp \alpha_{\eta}(a),\nu\otimes \eta} \\
& = \sum_{\zeta\in W_{<p}^n}\sum_{\eta\in W_k}e_{\zeta\ldotp \langle\zeta, \mu\otimes \eta\rangle\alpha_{\eta}(a), \nu\otimes \eta}\\
& = \sum_{\eta\in W_k^n}e_{\mu\otimes \eta\ldotp \alpha_{\eta}(a),\nu\otimes \eta}\\
& = \sum_{\eta\in W_k}\sum_{\zeta\in W_{<p}^n}e_{\mu\otimes \eta\ldotp \alpha_{\eta}(a), \zeta\ldotp \langle\zeta, \nu\otimes \eta\rangle}\\ 
& = \sum_{\eta\in W_k}e_{\mu\otimes \eta\ldotp \alpha_{\eta}(a), \nu\otimes \eta}\sum_{\zeta\in W_{<p}^n}e_{\zeta,\zeta}\\ 
& = e_{\mu\ldotp a,\nu}\otimes 1_{\mathcal{K}^{\otimes k}}q_n. 
\end{align*}
\end{proof}
\end{example}
\subsection{Incoming maps}

This section works extensively with countably generated free correspondences: first we use the structure of such a correspondence $\mathcal K$ to define incoming maps $D_p(\mathcal K) \to \mathcal T(\mathcal K)$, and then we use Kasparov's stabilization theorem to apply such incoming maps to general countably generated correspondences.
Throughout, $\mathcal K$ will generally be a countably generated free correspondence, while $\mathcal H$ will be an arbitrary (countably generated) Hilbert module or correspondence.

For a countably generated free correspondence $\mathcal K$ over $A$ with orthonormal basis $(\xi_i)_{i=1}^\infty$, recall that $W \subset \mathcal F(\mathcal K)$ denotes the set of elementary tensors in this generating set, while (from \eqref{W_kDef}) $W_k$ and $W_{<p}$ denote the subsets of elementary tensors of length $k$ and $<p$ respectively.
As Hilbert $A$-modules, we have
\begin{equation*}
\mathcal{K}^{\otimes k}\cong \bigoplus_{W_k}A, \ \ \ \mathcal{F}_p(\mathcal{K}) \cong \bigoplus_{W_{<p}}A, \ \ \ \mathcal{F}(\mathcal{K}) \cong \bigoplus_WA.
\end{equation*}

\subsubsection{Free correspondences}\label{technical0}

\begin{lem}\label{i-love-lamp}
Let $A$ be a $C^*$-algebra, let $\mathscr{H}$ be a Hilbert $A$-module, and let $I$ be an index set.
There is an inclusion $\mathbb{B}(\bigoplus_IA) \hookrightarrow \mathbb{B}(\bigoplus_I \mathscr{H})$. More specifically, an operator $S\in \mathbb{B}(\bigoplus_IA)$ acts adjointably on $\bigoplus_I\mathscr{H}$ via
\begin{equation}
\label{i-love-lamp-eq}
(x_i)_{i \in I} \mapsto (\sum_{i\in I} (\langle \xi_j,S\xi_i\rangle \ldotp x_i)_{j \in J}, 
\end{equation}
where $(\xi_i)_{i\in I}$ is the canonical orthonormal basis for $\bigoplus_I A$.
\end{lem}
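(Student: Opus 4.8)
The plan is to verify directly that the formula \eqref{i-love-lamp-eq} defines an adjointable operator on $\bigoplus_I \mathscr{H}$ and that the assignment $S \mapsto \tilde S$ is an injective $*$-homomorphism. First I would check that the formula makes sense, i.e., that for $(x_i)_{i\in I} \in \bigoplus_I \mathscr{H}$ the sums $\sum_{i\in I}\langle \xi_j, S\xi_i\rangle \ldotp x_i$ converge (in norm) for each $j$ and that the resulting family $(y_j)_{j\in I}$ again lies in $\bigoplus_I \mathscr{H}$, that is, $\sum_j \langle y_j, y_j\rangle$ converges. The cleanest way to see this, rather than wrestling with the double sums by hand, is to observe that the map in question is precisely the image of $S$ under the ampliation $\mathbb{B}(\bigoplus_I A) \to \mathbb{B}((\bigoplus_I A) \otimes_A \mathscr{H})$, $S \mapsto S \otimes 1_{\mathscr{H}}$, of Section 2.3, combined with the canonical unitary equivalence $(\bigoplus_I A)\otimes_A \mathscr{H} \cong \bigoplus_I \mathscr{H}$ sending $\xi_i \otimes x \mapsto (\delta_{ij}\, x)_{j}$. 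Once one grants that this unitary equivalence exists, adjointability, complete positivity of the resulting map on positive elements, multiplicativity, $*$-preservation and norm-preservation (hence injectivity) are all immediate from the corresponding properties of the interior-tensor-product ampliation, which were recorded in the Preliminaries.

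So the real content is to produce the unitary $(\bigoplus_I A)\otimes_A \mathscr{H} \xrightarrow{\ \cong\ } \bigoplus_I \mathscr{H}$ and to compute that $S\otimes 1_{\mathscr H}$ transported across it is exactly \eqref{i-love-lamp-eq}. On elementary tensors $\left(\sum_i \xi_i \ldotp a_i\right)\otimes x$, the candidate map sends this to $(a_j \ldotp x)_{j\in I}$; one checks it is isometric using the tensor-product inner product \eqref{TensorIP} together with $\langle \xi_i, \xi_j\rangle = \delta_{ij}$, namely
\[
\Big\langle \Big(\sum_i \xi_i a_i\Big)\otimes x,\ \Big(\sum_i \xi_i a_i\Big)\otimes x\Big\rangle
= \Big\langle x, \Big\langle \sum_i\xi_i a_i, \sum_j \xi_j a_j\Big\rangle x\Big\rangle
= \sum_i \langle a_i \ldotp x, a_i \ldotp x\rangle,
\]
so it extends to an isometry from the completion; surjectivity onto $\bigoplus_I \mathscr{H}$ follows since the image contains all finitely-supported families and is closed. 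Then for $S \in \mathbb{B}(\bigoplus_I A)$ one has $S\xi_i = \sum_j \xi_j \ldotp \langle \xi_j, S\xi_i\rangle$, so $(S\otimes 1)(\xi_i \otimes x) = (\langle \xi_j, S\xi_i\rangle \ldotp x)_{j\in I}$, and extending by linearity/continuity and using \eqref{TensorBalance} gives precisely \eqref{i-love-lamp-eq} on the finitely-supported $(x_i)$, hence everywhere by density.

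The main obstacle, such as it is, is purely bookkeeping: making sure the unitary equivalence $(\bigoplus_I A)\otimes_A \mathscr{H} \cong \bigoplus_I \mathscr{H}$ is set up carefully enough that convergence of the relevant series is automatic (this is where one must be a little careful, since $I$ is an arbitrary, possibly uncountable, index set and the direct sums are the module direct sums with the norm-convergence condition). An alternative, more hands-on route avoids the tensor product entirely: define $\tilde S$ by \eqref{i-love-lamp-eq}, prove boundedness by a Cauchy--Schwarz estimate in $\bigoplus_I A$ together with $\|S\|$-boundedness applied to truncations, and then exhibit $\widetilde{S^*}$ as the adjoint by a direct inner-product computation $\langle \tilde S (x_i), (y_j)\rangle = \langle (x_i), \widetilde{S^*}(y_j)\rangle$. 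Either way the verification is routine; I would present the tensor-product argument since it makes the homomorphism and injectivity claims essentially free.
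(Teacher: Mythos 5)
Your proposal is correct and follows essentially the same route as the paper: both construct the canonical unitary $(\bigoplus_I A)\otimes_A \mathscr{H} \cong \bigoplus_I\mathscr{H}$ by checking inner products on elementary tensors and surjectivity on finitely supported families, and then transport the ampliation $S\mapsto S\otimes 1_{\mathscr H}$ across it to obtain the stated formula. No substantive differences to report.
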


\begin{proof}
The map $a\otimes x \mapsto a\ldotp x$ implements an isomorphism between $A\otimes \mathscr{H}$ and $\mathscr{H}$.
Let us show that the map $m:  (a_i)_{i\in I} \otimes x\mapsto (a_i \ldotp x)_{j \in I}$ extends to an isomorphism between $(\bigoplus_IA)\otimes \mathscr{H}$ and $\bigoplus_I\mathscr{H}$.
We have
\begin{eqnarray*}
\langle (a_i)_i \otimes x, (b_i)_i \otimes y\rangle  
&\stackrel{\eqref{TensorIP}}=& \langle x, \langle (a_i)_i,(b_i)_i\rangle\ldotp y\rangle \\
&=& \langle x, \sum_i a_i^*b_i \ldotp y \rangle \\
&=& \sum_i \langle a_i\ldotp x, b_i\ldotp y \rangle \\
&=& \langle (a_i\ldotp x)_i, (b_i\ldotp y)_i \rangle \\
&=& \langle m((a_i)_i \otimes x), m((b_i)_i \otimes y) \rangle
\end{eqnarray*}
so that $m$ can be extended, as a $\langle \cdot,\cdot\rangle$-preserving linear functional, to the tensor product $(\bigoplus_IA)\otimes \mathscr{H}$. Moreover, the image of $m$ contains the dense set consisting of finitely supported elements in $\bigoplus_I\mathscr{H}$, so that $m$ is a unitary operator.

The result follows since $T\mapsto mTm^{-1}$ is a *-isomorphism between $\mathbb{B}((\bigoplus_IA)\otimes \mathscr{H})$ and $\mathbb{B}(\bigoplus_I\mathscr{H})$, and composing this with the natural embedding $\mathbb{B}(\bigoplus_IA) \hookrightarrow\mathbb{B}((\bigoplus_IA)\otimes \mathscr{H})$ yields a map satisfying \eqref{i-love-lamp-eq}. 
\end{proof}

\begin{cor}\label{techcortwo}
Let $A$ be a $C^*$-algebra, let $\mathcal K$ be a countably generated free correspondence with orthonormal basis $\{\xi_i\}_{i\in I}$, and define $W_{<p}$ by \eqref{W_kDef}.
For each $p \in \mathbb{N}$, there is an inclusion $\mathbb{B}(\mathcal{F}_p(\mathcal{K}))  \hookrightarrow \mathbb{B}(\bigoplus_{W_{<p}}\mathcal{F}(\mathcal{K}))$ that sends $e_{\mu\ldotp a,\nu}\otimes 1_{\mathcal{K}^{\otimes k}}$ (where $\mu,\nu \in W$, $a \in A$, and $\max\{|\mu|,|\nu|\} + k < p$) 
\begin{equation}
\label{techcortwo-eq1}
(x_{\zeta})_{\zeta \in W_{<p}} \mapsto \sum_{\eta,\eta'\in W_k} (\delta_{\zeta',\mu\otimes \eta'} \langle \eta', a\ldotp \eta\rangle \ldotp x_{\nu\otimes \eta})_{\zeta'' \in W_{<p}},
\end{equation}
and $a$ (as an operator in $\mathbb{B}(\mathcal{F}_p(\mathcal{K}))$) to the operator
\begin{equation}
\label{techcortwo-eq2}
(x_{\zeta})_{\zeta \in W_{<p}} \mapsto \sum_{\zeta \in W_{<p}} (\langle \zeta',a\ldotp \zeta \rangle\ldotp x_{\zeta})_{\zeta' \in W_{<p}}.
\end{equation}
\end{cor}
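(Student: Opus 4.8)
The plan is to apply Lemma \ref{i-love-lamp} with the index set $I = W_{<p}$ and the Hilbert $A$-module $\mathscr{H} = \mathcal{F}(\mathcal{K})$, so that $\bigoplus_{W_{<p}} A \cong \mathcal{F}_p(\mathcal{K})$ as Hilbert $A$-modules (via the identification $\mathcal{F}_p(\mathcal{K}) \cong \bigoplus_{W_{<p}} A$ recorded just before \S\ref{technical0}, which sends the tensor $\zeta \in W_{<p}$ to the $\zeta$-th standard basis vector $\xi_\zeta$). Lemma \ref{i-love-lamp} then immediately furnishes an inclusion $\mathbb{B}(\mathcal{F}_p(\mathcal{K})) \hookrightarrow \mathbb{B}(\bigoplus_{W_{<p}} \mathcal{F}(\mathcal{K}))$, and formula \eqref{i-love-lamp-eq} says that an operator $S$ acts on $(x_\zeta)_{\zeta \in W_{<p}}$ by $(x_\zeta)_\zeta \mapsto \big(\sum_{\zeta} \langle \xi_{\zeta'}, S\xi_\zeta\rangle \ldotp x_\zeta\big)_{\zeta'}$. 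So the only thing left to do is compute the ``matrix coefficients'' $\langle \xi_{\zeta'}, S\xi_\zeta\rangle \in A$ for the two types of generators $S$, namely $S = e_{\mu\ldotp a, \nu} \otimes 1_{\mathcal{K}^{\otimes k}}$ and $S = a$.

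For the second (easier) case, $S = a$ acts on $\mathcal{F}_p(\mathcal{K})$ by the left action of $A$, so under the identification $\mathcal{F}_p(\mathcal K) \cong \bigoplus_{W_{<p}} A$ it sends $\xi_\zeta$ (i.e.\ $\zeta \in W_{<p}$, an elementary tensor in the $\xi_i$) to $a\ldotp \zeta$; hence $\langle \xi_{\zeta'}, a\ldotp \zeta\rangle = \langle \zeta', a\ldotp\zeta\rangle$, which is exactly \eqref{techcortwo-eq2}. For the first case, I would use the strict-expansion identity \eqref{Strict-Expansion} (restricted to $\mathcal{F}_p(\mathcal K)$, where the sum is finite), which gives $e_{\mu\ldotp a, \nu}\otimes 1_{\mathcal K^{\otimes k}} = T_{\mu\ldotp a} T_\nu^*$ up to the bookkeeping of lengths; applying $T_\nu^*$ to a basis tensor $\zeta = \nu \otimes \eta$ of length $|\nu| + k$ (with $\eta \in W_k$) yields $\eta$, applying $T_{\mu\ldotp a}$ prepends $\mu\ldotp a$, and then one re-expands $\mu\ldotp a \otimes \eta$ into the basis $W$ by inserting $\langle \eta', a\ldotp\eta\rangle$ coefficients using \eqref{TensorIP}/\eqref{TensorBalance}. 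Taking inner products against $\xi_{\zeta'} = \xi_{\mu\otimes\eta'}$ then produces the $\delta_{\zeta', \mu\otimes\eta'}\langle\eta', a\ldotp\eta\rangle$ terms of \eqref{techcortwo-eq1}. (I would note in passing that \eqref{techcortwo-eq1} as written has the index $\zeta'$ on the right-hand side which is a summation/placeholder for the output coordinate indexed by $\zeta''$ — the point is simply that the output coordinate $\zeta''$ receives $\langle \zeta'', \mu\otimes\eta'\rangle$-weighted contributions, which for $\zeta'' \in W$ collapses to the nonzero term $\zeta'' = \mu\otimes\eta'$.)

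The main obstacle — really the only subtlety — is keeping the length bookkeeping straight: one must check that when $\max\{|\mu|,|\nu|\} + k < p$, every tensor $\nu\otimes\eta$ with $\eta \in W_k$ and every $\mu\otimes\eta'$ with $\eta' \in W_k$ still lies in $W_{<p}$, so that the formula genuinely defines an operator on $\bigoplus_{W_{<p}} \mathcal{F}(\mathcal{K})$ and the identification with the action coming from Lemma \ref{i-love-lamp} is consistent; this is immediate from $|\nu| + k < p$ and $|\mu| + k < p$. Everything else is a routine unwinding of the definitions \eqref{RankOneFormula}, \eqref{TensorIP}, \eqref{TensorBalance}, and \eqref{Strict-Expansion} together with the fact that $m$ in the proof of Lemma \ref{i-love-lamp} is a unitary intertwining the two actions, so I would not belabor the computation.
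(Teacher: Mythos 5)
Your proposal is correct and follows essentially the same route as the paper: apply Lemma \ref{i-love-lamp} with $I=W_{<p}$ and $\mathscr H=\mathcal F(\mathcal K)$, identify $\mathcal F_p(\mathcal K)\cong\bigoplus_{W_{<p}}A$ via $\zeta\mapsto\xi_\zeta$, and compute the matrix coefficients $\langle\xi_{\zeta'},S\xi_\zeta\rangle$ for the two generators. The only cosmetic difference is that you route the computation of $(e_{\mu\ldotp a,\nu}\otimes 1_{\mathcal K^{\otimes k}})(\zeta)$ through \eqref{Strict-Expansion} and the creation operators, whereas the paper computes it directly from \eqref{RankOneFormula} and \eqref{TensorBalance}; both give $\mu\otimes a\ldotp\eta$ for $\zeta=\nu\otimes\eta$, $\eta\in W_k$ (and $0$ otherwise), so the arguments coincide.
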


\begin{proof}
Applying Lemma \ref{i-love-lamp} with $\mathcal H=\mathcal F(\mathcal K)$ and $I=W_{<p}$ yields an embedding $\iota:\mathbb B(\bigoplus_{W_{<p}} A) \to \mathbb B(\bigoplus_{W_{<p}} \mathcal F(\mathcal K))$ such that for $S \in \mathbb B(\bigoplus_{W_{<p}} A)$ and $(x_{\zeta})_{\zeta \in W_{<p}}$,
\[ \iota(S)((x_\zeta)_{\zeta\in W_{<p}}) = \sum_{\zeta \in W_{<p}} (\langle \xi_{\zeta'},S\xi_\zeta \rangle\ldotp x_\zeta)_{\zeta' \in W_{<p}}. \]
We use the canonical identification of $\mathcal F_p(\mathcal K)$ with $\bigoplus_{W_{<p}} A$ to view $\iota$ as an inclusion of $\mathbb(F_p(\mathcal K))$ to $\mathbb B(\bigoplus_{W_{<p}} \mathcal F(\mathcal K))$.
This identification takes an elementary tensor $\zeta \in W_{<p}$ to the orthonormal basis element $\xi_\zeta$.
Since $a \in A$ acts on $\mathcal F_p(\mathcal K)$ by sending $\zeta \in W_{<p}$ to $a.\zeta$, \eqref{techcortwo-eq2} follows immediately.
For \eqref{techcortwo-eq1}, using \eqref{RankOneFormula} and \eqref{TensorBalance}, first note that for $\zeta \in W_{<p}$,
\[ (e_{\mu\ldotp a,\nu} \otimes 1_{\mathcal H^{\otimes k}})(\zeta) = \begin{cases} \mu \otimes a\ldotp \eta, \quad &\zeta=\nu \otimes \eta, |\eta|=k; \\ 0, \quad &\text{otherwise}. \end{cases} \]
For $\zeta = \mu \otimes \eta \in W_{<p}$ where $\eta \in W_k$ and for $\zeta' \in W_{<p}$,
\begin{align*}
\langle \zeta',(e_{\mu\ldotp a,\nu} \otimes 1_{\mathcal H^{\otimes k}})\zeta \rangle
&= \langle \zeta',\mu \otimes a\ldotp \eta \rangle \\
&= \begin{cases} \langle \eta', a\ldotp \eta \rangle, \quad &\zeta' = \mu \otimes \eta', |\eta'| = k; \\ 0, \quad &\text{otherwise}. \end{cases}
\end{align*}
Putting this together, we have
\begin{align*}
\iota(e_{\mu\ldotp a,\nu} \otimes 1_{\mathcal H^{\otimes k}})((x_\zeta)_{\zeta \in W_{<p}})
&= \sum_{\zeta \in W_{<p}} (\langle \zeta',(e_{\mu\ldotp a,\nu} \otimes 1_{\mathcal H^{\otimes k}})\zeta \rangle\ldotp x_\zeta)_{\zeta' \in W_{<p}} \\
&= \sum_{\eta,\eta' \in W_k} (\delta_{\zeta',\mu \otimes \eta'} \langle \eta', a\ldotp \eta\rangle \ldotp x_{\mu \otimes \eta})_{\zeta' \in W_{<p}},
\end{align*}
as required.
\end{proof}

\begin{lem}\label{Rows-Vectors}
Let $A$ be a $C^*$-algebra, let $\mathscr{H}$ be a Hilbert $A$-module, and let $I$ be a countable index set.
Let $\{T_i\}_{i\in I}$ be a collection of isometries in $\mathbb{B}(\mathscr{H})$  with orthogonal ranges such that $\sum_i T_iT_i^*$ converges strictly in $\mathbb{B}(\mathscr{H})$.
Then the map $[T_i]_I:\bigoplus_I\mathscr{H} \rightarrow \mathscr{H}$ given by $(x_i)_{i\in I}\mapsto \sum_{i \in I} T_ix_i$ is an adjointable operator  with adjoint given by 
\begin{equation}
\label{TiAdjoint}
[T_i]_I^*(x) = (T_i^*x)_{i \in I}.
\end{equation}
Moreover, $[T_i]_I$ is an isometry. 
\end{lem}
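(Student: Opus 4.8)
The plan is to verify the three assertions --- that $[T_i]_I$ is a well-defined bounded $A$-linear map, that \eqref{TiAdjoint} gives its adjoint, and that it is an isometry --- in that order, by direct manipulation of $A$-valued inner products. The starting point is that the hypotheses give $T_i^*T_j = \delta_{ij}1$: each $T_i$ being an isometry gives $T_i^*T_i = 1$, while orthogonality of the ranges means exactly that $\langle T_ix,T_jy\rangle = 0$ for $i\ne j$ and all $x,y\in\mathscr H$, i.e.\ $T_i^*T_j = 0$.

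For well-definedness, fix $(x_i)_{i\in I}\in\bigoplus_I\mathscr H$, so that $\sum_i\langle x_i,x_i\rangle$ converges in norm. For finite $F\subseteq I$, the relations above give $\bigl\|\sum_{i\in F}T_ix_i\bigr\|^2 = \bigl\|\sum_{i\in F}\langle x_i,x_i\rangle\bigr\|$, so the net of partial sums $\sum_{i\in F}T_ix_i$ is Cauchy and converges in $\mathscr H$; thus $[T_i]_I$ is well-defined and $A$-linear, and the same identity in the limit gives $\|[T_i]_I((x_i)_i)\|^2 = \|(x_i)_i\|^2$.

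Next I would check that $R\colon x\mapsto (T_i^*x)_{i\in I}$ really takes values in $\bigoplus_I\mathscr H$: the terms are positive, $\langle T_i^*x,T_i^*x\rangle = \langle x,T_iT_i^*x\rangle$, and the hypothesis that $\sum_iT_iT_i^*$ converges strictly makes the net $\sum_{i\in F}T_iT_i^*x$ Cauchy in $\mathscr H$, whence the estimate $\bigl\|\sum_{i\in F'\setminus F}\langle T_i^*x,T_i^*x\rangle\bigr\| = \bigl\|\bigl\langle x,\sum_{i\in F'\setminus F}T_iT_i^*x\bigr\rangle\bigr\| \le \|x\|\,\bigl\|\sum_{i\in F'\setminus F}T_iT_i^*x\bigr\|$ shows $\sum_i\langle T_i^*x,T_i^*x\rangle$ is Cauchy, hence convergent. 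Adjointability then follows from $\langle[T_i]_I((x_i)_i),y\rangle = \sum_i\langle T_ix_i,y\rangle = \sum_i\langle x_i,T_i^*y\rangle = \langle(x_i)_i,R(y)\rangle$ for all $(x_i)_i$ and $y$ --- the first equality by norm-continuity of $\langle\,\cdot\,,y\rangle$ and the last by definition of the inner product on the direct sum --- so $[T_i]_I^* = R$, which is \eqref{TiAdjoint}. Finally, for each $j$ the $j$-th coordinate of $[T_i]_I^*[T_i]_I((x_i)_i)$ equals $T_j^*\bigl(\sum_iT_ix_i\bigr) = \sum_iT_j^*T_ix_i = x_j$ (using norm-continuity of $T_j^*$ and $T_i^*T_j = \delta_{ij}1$), so $[T_i]_I^*[T_i]_I = \operatorname{id}_{\bigoplus_I\mathscr H}$, i.e.\ $[T_i]_I$ is an isometry.

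The only delicate point is convergence. That $(x_i)_i$ lies in the module direct sum, combined with orthogonality, is exactly what makes $\sum_iT_ix_i$ converge; and the strict convergence of $\sum_iT_iT_i^*$ is precisely what is needed for $(T_i^*x)_i$ to define an element of $\bigoplus_I\mathscr H$ (without it, $R$ is merely a formal expression). Everything else is routine once these two points are settled.
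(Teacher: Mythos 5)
Your proof is correct and follows essentially the same route as the paper's: strict convergence of $\sum_i T_iT_i^*$ is what makes the adjoint formula land in $\bigoplus_I\mathscr H$, orthogonality of the ranges plus the direct-sum condition is what makes $\sum_i T_ix_i$ converge and gives the isometry, and the adjoint identity is a direct computation. You simply spell out the Cauchy-net estimates that the paper leaves implicit, and verify $[T_i]_I^*[T_i]_I=\operatorname{id}$ explicitly rather than deducing inner-product preservation from the orthogonal-family fact; no gap either way.
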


\begin{proof}
The hypothesis that $\sum_i T_iT_i^*$ converges strictly implies that the formula for $[T_i]_I^*(x)$ does define an element of $\bigoplus_I \mathcal H$, and therefore this formula produces a well-defined map $\mathcal H \to \bigoplus_I \mathcal H$.
For an indexed family $(x_i)_{i \in I}$ of pairwise orthogonal elements of $\mathcal H$, note that $\sum_i x_i$ converges to an element $x \in \mathcal H$ if and only if $(x_i)_{i \in I}$ represents an element $y$ of $\bigoplus_I \mathcal H$, and in this case, $\langle x,x\rangle_{\mathcal H} = \langle y,y\rangle_{\bigoplus_I \mathcal H}$.
Since the $T_i$ have orthogonal ranges, it follows from these facts that $[T_i]_I$ is a well-defined isometry.
It is an easy calculation to see that the formula for $[T_i]_I^*$ does indeed provide an adjoint to $[T_i]_I$.
\end{proof}

\begin{rem}
The assumption that $\sum_i T_iT_i^*$ converges strictly is not automatic; here is an example.
Let $A=l^\infty(\mathbb C)$ and let $\mathcal H = \bigoplus_{\mathbb N} A$; write an element of $\mathcal H$ as $(x_i^j)$ where for each $j$, $\sum_i |x_i^j|^2$ converges (and is uniformly bounded in $j$).
For each $j \in \mathbb N$, pick an injective map $\theta_j:\mathbb N \times \mathbb N \to \mathbb N$ such that $\theta_j(1,j)=1$.
For each $k \in \mathbb N$, define $T_k:\mathcal H \to \mathcal H$ by $T_k((x_i^j)) = (y_i^j)$ where
\[ y_i^j = \begin{cases} x_{i'}^j, \quad &i=\theta^j(i',k); \\ 0,\quad &\text{otherwise}. \end{cases} \]
Using the fact that the $\theta_j$ are injective, one easily computes
\[ \langle (x_i^j), (x_i^j) \rangle = (\sum_i |x_i^j|^2)_j = \langle T_k((x_i^j)), T_k(x_i^j)) \rangle, \]
so that each $T_k$ is isometric.
Injectivity of the $\theta_j$ also implies that the $T_k$ have pairwise orthogonal ranges.

To see that $\sum_k T_kT_k^*$ does not converge strictly, let us check that $\sum_k T_kT_k^* \xi_1$ does not converge (in $\mathcal H$).
Note that $\xi_1 = (\delta_{i,1})_{i,j}$, and that for $(x_i^j) \in \mathcal H$,
\[ T_kT_k^*(x_i^j) = (\chi_{A_{j,k}}(i)y_i^j), \]
where
\[ A_{j,k} = \{\theta_j(n,k) \mid n \in \mathbb N\}. \]
By our choice of $\theta_j$, we have $1 \in A_{j,k}$ if and only if $j=k$.
Therefore,
\[ \sum_{k=1}^n T_kT_k^*\xi_1 = \xi_1\ldotp \chi_{\{1,\dots,n\}} \]
(viewing $\chi_{\{1,\dots,n\}}$ as an element of $l^\infty(\mathbb N)=A$).
The sequence $(\chi_{\{1,\dots,n\}})_n$ does not converge (in norm) in $A$, so that $\sum_k T_kT_k^* \xi_1$ does not converge in $\mathcal H$.
\end{rem}

\begin{cor}\label{techcorone}
Let $A$ be a $C^*$-algebra, let $\mathcal K$ be a countably generated free correspondence with orthonormal basis $\{\xi_i\}_{i\in I}$, and define $W_{k}$ by \eqref{W_kDef}.
For any $k \ge 0$, there is an isometry $[T_{\eta}]_{W_k}  \in \mathbb{B}(\bigoplus_{W_k}\mathcal{F}(\mathcal{K}), \mathcal{F}(\mathcal{K}))$ given by 
\begin{equation*}
(x_{\eta})_{\eta \in W_k} \mapsto \sum_{\eta\in W_k}T_{\eta}( x_{\eta}).
\end{equation*}
\end{cor}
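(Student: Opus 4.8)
The plan is to deduce this corollary directly from Lemma~\ref{Rows-Vectors}, applied with $\mathscr H = \mathcal F(\mathcal K)$ and $I = W_k$, and with the family of isometries $\{T_\eta\}_{\eta \in W_k}$ given by the creation operators on the full Fock space associated to the elementary tensors $\eta \in W_k$. The formula for $[T_\eta]_{W_k}$ is then exactly the one asserted, and the adjoint formula \eqref{TiAdjoint} specializes to $[T_\eta]_{W_k}^*(x) = (T_\eta^* x)_{\eta \in W_k}$. So everything reduces to verifying that the hypotheses of Lemma~\ref{Rows-Vectors} hold for this particular family.

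First I would check that each $T_\eta$ (for $\eta \in W_k$) is an isometry in $\mathbb B(\mathcal F(\mathcal K))$: this follows from \eqref{Tequations} (or \eqref{Tspan}), since $T_\eta^* T_\eta = \langle \eta, \eta\rangle = 1$ because $\eta$ is an elementary tensor in an orthonormal basis, hence a unit vector with $\langle \eta,\eta\rangle = 1_A$. Second, the ranges of the $T_\eta$ for distinct $\eta, \eta' \in W_k$ are orthogonal: $T_\eta^* T_{\eta'} = \langle \eta, \eta'\rangle = 0$ when $\eta \ne \eta'$, again by orthonormality of the basis and the multiplicativity of the inner product on elementary tensors of the same length. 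Third — and this is the only point requiring genuine care — I must show that $\sum_{\eta \in W_k} T_\eta T_\eta^*$ converges strictly in $\mathbb B(\mathcal F(\mathcal K))$. The key identity is that $T_\eta T_\eta^*$ is the projection onto the closed submodule spanned by elementary tensors beginning with $\eta$, and that $\sum_{\eta \in W_k} T_\eta T_\eta^*$ is the projection onto $\bigoplus_{j \ge k}\mathcal K^{\otimes j} \subseteq \mathcal F(\mathcal K)$. Concretely, on an elementary tensor $\zeta = \zeta_1 \otimes \cdots \otimes \zeta_m$ with $m \ge k$, expanding $\zeta$ in the basis $W$ and using the formula for $T_\eta^*$ shows that $\sum_{\eta \in W_k^n} T_\eta T_\eta^* \zeta$ stabilizes (and equals $\zeta$) once $n$ exceeds all basis indices appearing in $\zeta$, while on elementary tensors of length $<k$ every term is $0$; a similar computation handles the adjoints. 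Since finite partial sums of the $T_\eta T_\eta^*$ are increasing projections bounded by $1$, this pointwise convergence on the dense span of elementary tensors upgrades to strict convergence.

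I would organize the proof as: (1) invoke \eqref{Tequations} to see the $T_\eta$ are isometries with pairwise orthogonal ranges; (2) establish strict convergence of $\sum_{\eta \in W_k} T_\eta T_\eta^*$ via the projection description above (this is essentially a restatement of the strict-convergence identity \eqref{Strict-Expansion} with $x = y = 1$ on each $\mathcal H^{\otimes j}$, summed appropriately, and could be cited from there); (3) apply Lemma~\ref{Rows-Vectors} to conclude. The main obstacle is step (2): one needs to be slightly careful because $W_k$ is infinite, so strict convergence is not automatic (indeed the preceding \textbf{Remark} exhibits a family of isometries with orthogonal ranges for which $\sum_i T_i T_i^*$ fails to converge strictly). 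But here the partial sums are genuine projections with a transparent geometric meaning on the Fock space, and monotonicity of projections makes the verification routine; I expect this to take only a few lines, especially if one leans on \eqref{Strict-Expansion}.
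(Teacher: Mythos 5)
Your proposal is correct and follows essentially the same route as the paper: both verify that $\{T_\eta\}_{\eta\in W_k}$ are isometries with pairwise orthogonal ranges and that $\sum_{\eta\in W_k}T_\eta T_\eta^*$ converges strictly (to $1_{\mathcal F(\mathcal K)}-1_{\mathcal F_k(\mathcal K)}$), and then apply Lemma \ref{Rows-Vectors} with $\mathscr H=\mathcal F(\mathcal K)$ and $I=W_k$. The only difference is that the paper simply asserts these hypotheses while you spell out the verification, which is accurate.
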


\begin{proof}
Note that $(T_\eta)_{\eta \in W_k}$ is a family of isometries with pairwise orthogonal images, and that $\sum_{\eta\in W_k}T_{\eta}T_{\eta}^*$ converges strictly to $1_{\mathcal{F}(\mathcal{K})} - 1_{\mathcal{F}_k(\mathcal{K})}$ in $\mathbb{B}(\mathcal{F}(\mathcal{K}))$.
Thus setting $I=W_k$ and $\mathscr{H} =\mathcal{F}(\mathcal{K})$, the hypotheses of Lemma \ref{Rows-Vectors} are satisfied; this lemma shows that $[T_\eta]_{W_k}$ is a well-defined isometry.
\end{proof}

\begin{rem}
For each $p \in \mathbb{N}$ and $0 \le k < p$, we can regard $[T_{\eta}]_{W_k}$ as being an element in $\mathbb{B}(\bigoplus_{W_{<p}}\mathcal{F}(\mathcal{K}), \mathcal{F}(\mathcal{K}))$ by identifying $\bigoplus_{W_{<p}}\mathcal{F}(\mathcal{K})$ with $\bigoplus_{W_0}\mathcal{F}(\mathcal{K})\oplus\cdots\oplus \bigoplus_{W_{p-1}}\mathcal{F}(\mathcal{K})$, and defining  $[T_{\eta}]_{W_k}$ to be zero on $\bigoplus_{W_{k'}}\mathcal{F}(\mathcal{K})$ for $k' \ne k$. 
\end{rem}

Recall from Definition \ref{CPalgebra} that, for $x \in \mathcal K$, $S_x$ denotes the image of $T_x$ in the Cuntz--Pimsner algebra $\mathcal O(\mathcal K)$.

\begin{lem}\label{maps}
Let $A$ be a $C^*$-algebra, let $\mathcal K$ be a countably generated free correspondence with orthonormal basis $\{\xi_i\}_{i\in I}$, and define $W_{<p}$ by \eqref{W_kDef}.
Let $p \in \mathbb{N}$ and let $\mathcal{G} =  (g_0,\ldots,g_{p-1})$ be an tuple of positive contractions in $A$.
Set 
\begin{equation}
\label{Rdef}
R_{\mathcal G} = \sum_{k=0}^{p-1}g_k[T_{\eta}]_{k} \in \mathbb{B}(\bigoplus_{W_{<p}}\mathcal{F}(\mathcal{K}), \mathcal{F}(\mathcal{K})).
\end{equation}
Define $\sigma_{\mathcal G}:D_p(\mathcal K) \to \mathbb B(\mathcal F(\mathcal K))$ to be the following composition:
\[ 
D_p(\mathcal K) \subset \mathbb B(\mathcal F_p(\mathcal K)) \to \mathbb B(\bigoplus_{W_{<p}}\mathcal F(\mathcal K)) \stackrel{R_{\mathcal G}\cdot R_{\mathcal G}^*}{\longrightarrow} \mathbb B(\mathcal F(\mathcal K)), \]
where the first map is the inclusion given by Corollary \ref{techcortwo}.
Let $Q:\mathcal M(J(\mathcal K)) \to \mathcal M(J(\mathcal K))/J(\mathcal K)$ denote the quotient map.
Then $\sigma_{\mathcal G}(D_p(\mathcal K)) \subset Q^{-1}(\mathcal O(\mathcal K))$, and so there exists a c.p.\ map
\[ \rho_{\scriptscriptstyle{\mathcal{G}}} = Q \circ \sigma_{\mathcal G}: D_p(\mathcal{K}) \rightarrow \mathcal{O}(\mathcal{K}). \]
This map satisfies
\begin{equation*}
\label{rhoMapDef}
\rho_{\scriptscriptstyle{\mathcal G}}(e_{x,y}\otimes 1_{\mathcal{K}^{\otimes k}}) = g_{k +\lvert x\rvert}S_xS_y^*g_{k + \lvert y\rvert}
\end{equation*}
for elementary tensors $x,y$ in $\mathcal{F}_p(\mathcal{K})$ and $k \in \mathbb N$ such that $\max\{|x|,|y|\}+k<p$.
\end{lem}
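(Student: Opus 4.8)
The plan is to verify the claimed properties of $\sigma_{\mathcal G}$ and $\rho_{\mathcal G}$ by explicit computation on the spanning elements $e_{x,y}\otimes 1_{\mathcal K^{\otimes k}}$ of $D_p(\mathcal K)$, using the concrete formulas for the inclusion $\mathbb B(\mathcal F_p(\mathcal K)) \hookrightarrow \mathbb B(\bigoplus_{W_{<p}}\mathcal F(\mathcal K))$ from Corollary \ref{techcortwo} and for the row isometries $[T_\eta]_{W_k}$ from Corollary \ref{techcorone}. First I would record that $\sigma_{\mathcal G}$ is completely positive, being the composition of the inclusion (a $*$-homomorphism) with the compression $T \mapsto R_{\mathcal G} T R_{\mathcal G}^*$; contractivity of $R_{\mathcal G}$ is not needed at this stage, only adjointability, which follows from Corollary \ref{techcorone} and boundedness of the $g_k$. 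The key is then a direct calculation: for an elementary tensor $x = \mu \in W$, $y = \nu \in W$ (after absorbing the scalar entries, one reduces to basis tensors by linearity and the $A$-bimodule relations), and $k$ with $\max\{|\mu|,|\nu|\}+k < p$, I want to compute $R_{\mathcal G}\,\iota(e_{\mu,\nu}\otimes 1_{\mathcal K^{\otimes k}})\,R_{\mathcal G}^*$ as an operator on $\mathcal F(\mathcal K)$. Feeding \eqref{techcortwo-eq1} into the sandwich and using \eqref{TiAdjoint}, the $W_k$-indexed sums collapse via the orthonormality relations $\langle \eta', \eta\rangle = \delta_{\eta',\eta}1_A$ and the identity $T_\mu T_\eta = T_{\mu\otimes\eta}$, and one is left with $g_{k+|\mu|} T_\mu T_\nu^* g_{k+|\nu|}$ up to the strict-convergence bookkeeping in \eqref{Strict-Expansion}; I would be careful to note that $R_{\mathcal G}[T_\eta]_{W_k}$ only picks up the $k$-th block, which is precisely why the grading index $k + |x|$ (resp.\ $k+|y|$) appears rather than $k$ alone.

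Next I would address the assertion $\sigma_{\mathcal G}(D_p(\mathcal K)) \subset Q^{-1}(\mathcal O(\mathcal K))$, i.e.\ that $\sigma_{\mathcal G}(D_p(\mathcal K)) \subset \mathcal M(J(\mathcal K))$ and, modulo $J(\mathcal K)$, lands in the image of $\mathcal T(\mathcal K)$. From the computation above, $\sigma_{\mathcal G}(e_{x,y}\otimes 1_{\mathcal K^{\otimes k}})$ differs from $g_{k+|x|}T_x T_y^* g_{k+|y|} \in \mathcal T(\mathcal K)$ by at most a finite-rank correction supported on low-degree Fock space components, hence by an element of $J(\mathcal K)$ (recall $J(\mathcal K)$ is generated by $\bigcup_n \mathbb B(\bigoplus_{k\le n}\mathcal K^{\otimes k})$, and $\mathcal T(\mathcal K) \subset \mathcal M(J(\mathcal K))$ by the remarks preceding Definition \ref{CPalgebra}). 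Actually I expect the cleaner route is to observe that $R_{\mathcal G}$ intertwines in a way making $R_{\mathcal G}\cdot R_{\mathcal G}^*$ carry the image of the inclusion into $\mathcal M(J(\mathcal K))$ directly, but either way the point is that $Q\circ\sigma_{\mathcal G}$ kills the $J(\mathcal K)$-valued discrepancy, so $\rho_{\mathcal G} := Q\circ\sigma_{\mathcal G}$ is a well-defined c.p.\ map into $\mathcal O(\mathcal K)$ with $\rho_{\mathcal G}(e_{x,y}\otimes 1_{\mathcal K^{\otimes k}}) = g_{k+|x|}S_x S_y^* g_{k+|y|}$, using $Q(T_x) = S_x$ and $Q(a) = a$.

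The main obstacle I anticipate is the bookkeeping in the core computation: correctly tracking how the double sum over $\eta,\eta' \in W_k$ in \eqref{techcortwo-eq1}, when conjugated by $R_{\mathcal G} = \sum_{k'} g_{k'}[T_\eta]_{W_{k'}}$, forces $k' = k$ on both sides and reassembles into $T_\mu T_\eta = T_{\mu\otimes\eta}$, and making sure the strict-convergence issues (the sum $\sum_\eta T_\eta T_\eta^*$ only converges strictly, per Corollary \ref{techcorone}) do not cause trouble — here the finiteness $|\mu|,|\nu|,k < p$ is what saves us, since the relevant operators act nontrivially only on finitely many Fock components relative to any fixed vector after the compression. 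A secondary subtlety is that elements of $D_p(\mathcal K)$ of the form $e_{\mu.a,\nu}\otimes 1$ involve a general $a \in A$, so one must either carry $a$ through the computation (using $\langle \eta', a.\eta\rangle = \delta_{\eta',\eta} \alpha\text{-twisted scalars}$ in the free case, where orthonormality gives $\langle\eta',a.\eta\rangle = \delta_{\eta',\eta}a$ since the basis is orthonormal and $a$ acts on the left) or reduce to $a = 1$ by the bimodule relations $T_{\mu.a} = T_\mu a$; I would do the latter for transparency.
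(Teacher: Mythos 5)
Your skeleton is the paper's argument: sandwich the inclusion of Corollary \ref{techcortwo} by $R_{\mathcal G}$, compute on spanning elements of $D_p(\mathcal K)$, note that the outcome differs from $g_{k+|x|}T_xT_y^*g_{k+|y|}$ only by an operator supported on a finite cutoff of the Fock space, hence by an element of $J(\mathcal K)$, and apply $Q$ to get $g_{k+|x|}S_xS_y^*g_{k+|y|}$. That part is sound (though ``finite-rank'' is the wrong word for the correction --- it lies in some $\mathbb B(\mathcal F_n(\mathcal K))\subset J(\mathcal K)$ but need not be compact; your stated reason for membership in $J(\mathcal K)$ is the right one). The genuine gap is your treatment of the $A$-coefficients. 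A general spanning element of $D_p(\mathcal K)$ is $e_{\mu\ldotp a,\nu}\otimes 1_{\mathcal K^{\otimes k}}$ with $a \in A$, and you cannot ``reduce to $a=1$'': the operators $e_{\mu,\nu}\otimes 1_{\mathcal K^{\otimes k}}$ with $\mu,\nu \in W_{<p}$ do not span $D_p(\mathcal K)$ densely (already for $p=1$ one has $D_1(\mathcal K)\cong A$ acting by left multiplication, while the $a=1$ operators span only $\mathbb C\cdot 1$), and since $\rho_{\mathcal G}$ is not multiplicative, the relation $T_{\mu\ldotp a}=T_\mu a$ does not transfer the verified formula from $e_{\mu,\nu}$ to $e_{\mu\ldotp a,\nu}$. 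Your fallback, carrying $a$ through via $\langle \eta',a\ldotp\eta\rangle=\delta_{\eta',\eta}a$, is also false: orthonormality of the basis concerns the right inner product only, whereas the left action is an arbitrary injective unital $*$-homomorphism into $\mathbb B(\mathcal K)$ and need not be diagonal or untwisted with respect to the basis (in Example \ref{QD-examples}(ii) one gets $\delta_{\eta',\eta}\alpha_\eta(a)$, and in general there are off-diagonal matrix coefficients). With this identity the double sum over $\eta,\eta'$ does not collapse the way you describe.

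The repair, which is exactly what the paper does, is to keep the matrix coefficients $\langle\eta',a\ldotp\eta\rangle$ and use the basis reconstruction formula $\sum_{\eta'\in W_k}\eta'\ldotp\langle\eta',w\rangle=w$ for $w\in\mathcal K^{\otimes k}$ to obtain
\[
\sum_{\eta,\eta'\in W_k}T_{\eta'}\langle\eta',a\ldotp\eta\rangle T_\eta^{*}\;=\;a\,\bigl(1_{\mathcal F(\mathcal K)}-1_{\mathcal F_k(\mathcal K)}\bigr),
\]
so that $\sigma_{\mathcal G}(e_{\mu\ldotp a,\nu}\otimes 1_{\mathcal K^{\otimes k}})=g_{k+|\mu|}T_\mu\, a\,(1_{\mathcal F(\mathcal K)}-1_{\mathcal F_k(\mathcal K)})\,T_\nu^{*}g_{k+|\nu|}$. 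The correction term $g_{k+|\mu|}T_\mu a\,1_{\mathcal F_k(\mathcal K)}T_\nu^{*}g_{k+|\nu|}$ lies in $J(\mathcal K)$, giving $\rho_{\mathcal G}(e_{\mu\ldotp a,\nu}\otimes 1_{\mathcal K^{\otimes k}})=g_{k+|\mu|}S_{\mu\ldotp a}S_\nu^{*}g_{k+|\nu|}$, and the formula for arbitrary elementary tensors $x,y$ then follows by linearity and continuity because elements of the form $\mu\ldotp a$ are total in $\mathcal F_p(\mathcal K)$. With that adjustment your proof coincides with the paper's.
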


\begin{proof}
For $a\in A$, $\mu,\nu \in W_{<p}$, and $k \ge 0$ such that $e_{\mu\ldotp a,\nu}\otimes 1_{\mathcal{K}^{\otimes k}} \in D_p(\mathcal{K})$, and $z\in \mathcal{F}(\mathcal{K})$, we have
\begin{eqnarray*}
(R_{\mathcal G}e_{\mu\ldotp a,\nu}\otimes 1_{\mathcal{K}^{\otimes k}}R_{\mathcal G}^*)(z)
&\stackrel{\eqref{TiAdjoint}}=& R_{\mathcal G}(e_{\mu\ldotp a,\nu}\otimes 1_{\mathcal{K}^{\otimes k}})(T_{\zeta}^*g_{\lvert \zeta\rvert}\ldotp z)_{\zeta \in W_{<p}} \\ 
&\stackrel{\eqref{techcortwo-eq1}}=& R_{\mathcal G}(\sum_{\eta,\eta'\in W_k} \delta_{\zeta',\mu\otimes \eta'} \langle \eta',a\ldotp \eta\rangle \ldotp T_{\nu \otimes \eta}^* g_{\lvert \nu\otimes \eta\rvert}\ldotp z)_{\zeta'\in W_{<p}} \\
&=& \sum_{\eta,\eta'\in W_{k}} g_{\lvert \mu\otimes\eta'\rvert} T_{\mu\otimes\eta'} \langle \eta',a\ldotp \eta\rangle\ldotp T_{\nu\otimes\eta}^* g_{\lvert \nu\otimes\eta \rvert}\ldotp z \\
&=& g_{\lvert \mu\rvert + k} T_\mu \left(\sum_{\eta,\eta'\in W_{k}} T_{\eta'}\langle \eta',a\ldotp \eta\rangle T_\eta^*\right) T_\nu^* g_{\lvert \nu\rvert+k}\ldotp z.
\end{eqnarray*}
Note that for a basis element $\xi \in W$,
\begin{align*}
\sum_{\eta,\eta' \in W_k} T_{\eta'}\langle \eta',a\ldotp \eta\rangle T_\eta^* \xi
&= \begin{cases} \sum_{\eta'\in W_k} T_{\eta'} \langle \eta',a\ldotp \eta\rangle \xi', \quad &\xi=\eta\otimes \xi', \eta\in W_k; \\ 0,\quad &|\xi|<k  \end{cases} \\
&= \begin{cases} a\ldotp \eta\otimes \xi', \quad &\xi=\eta\otimes \xi', \eta\in W_k; \\ 0,\quad &|\xi|<k  \end{cases} \\
&= (1_{\mathcal F(\mathcal K)}-1_{\mathcal F_k(\mathcal K)})\xi,
\end{align*}
and by linearity, this formula continues to hold for all $\xi \in \mathcal F(\mathcal K)$.
Putting these together, we find
\[
(R_{\mathcal G}e_{\mu\ldotp a,\nu}\otimes 1_{\mathcal{K}^{\otimes k}}R_{\mathcal G}^*)(z) = (g_{\lvert\mu\rvert+k}T_{\mu}a(1_{\mathcal{F}(\mathcal{K})} - 1_{\mathcal{F}_k(\mathcal{K})})T_{\nu}^*g_{\lvert \nu\rvert}+k)(z).
\]
Define $\rho_{\mathcal G}=Q\circ\sigma_{\mathcal G}:D_p(\mathcal K) \to \mathcal{M}(J(\mathcal{K}))$, and we see that \eqref{rhoMapDef} holds for $x=\mu.a, y=\nu$ where $\mu,\nu \in W_{<p}$.
By density, \eqref{rhoMapDef} holds for all elementary tensors $x,y$, and the image of $\rho_{\mathcal G}$ is contained in $\mathcal O(\mathcal K)$ (i.e., the image of $\sigma_{\mathcal G}$ is contained in $Q^{-1}(\mathcal O(\mathcal K))$).
\end{proof}

\begin{lem}\label{moremaps}
Let $A$ be a $C^*$-algebra, let $\mathcal K$ be a countably generated free correspondence with orthonormal basis $\{\xi_i\}_{i\in I}$, and define $W_{<p}$ by \eqref{W_kDef}.
Let $\mathcal{G} = (g_0,\ldots,g_{p-1})$, let $\delta$ be the maximum value of $\lVert g_ig_j\rVert$, $i\ne j$, and define $R_{\mathcal G}$ by \eqref{Rdef}.
\begin{enumerate}
\item Regarding $g_k^2|_{\mathcal{K}^{\otimes k}}$ as an element of $\mathbb{B}(\bigoplus_{W_{<p}}\mathcal{F}(\mathcal{K}))$, we have 
\begin{equation*}
R_{\mathcal G}^*R_{\mathcal G} \approx_{p^2\delta} g_0^2|_{\mathcal{K}^{\otimes 0}} + \cdots + g_{p-1}^2|_{\mathcal{K}^{\otimes p-1}}.
\end{equation*}  
\item Let $w$ and $z$ be elementary tensors in $\mathcal{F}_p(\mathcal{K})$ satisfying $w\ldotp g_l \approx_{\delta} g_{l + \lvert w\rvert}\ldotp w$ and $z\ldotp g_l\approx_{\delta} g_{l + \lvert z\rvert}\ldotp z$. If $k \in \mathbb N$ is such that $\max\{\lvert w\rvert,\lvert z\rvert\} + k < p$, then 
\begin{equation*}
\lVert [R_{\mathcal G}^*R_{\mathcal G}, e_{w,z}\otimes 1_{\mathcal{K}^{\otimes k}}]\rVert < 2(p^2 + 2)\delta. 
\end{equation*}
\end{enumerate}
\end{lem}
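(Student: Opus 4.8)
The plan is to expand $R_{\mathcal G}^*R_{\mathcal G}$ directly from \eqref{Rdef}; throughout, write $\Delta_{\mathcal G} := g_0^2|_{\mathcal K^{\otimes 0}} + \cdots + g_{p-1}^2|_{\mathcal K^{\otimes p-1}} \in \mathbb B(\bigoplus_{W_{<p}}\mathcal F(\mathcal K))$ for the operator on the right-hand side of (1). Since each $g_k \in A$ acts self-adjointly on $\mathcal F(\mathcal K)$, we have $R_{\mathcal G}^* = \sum_k [T_\eta]_k^* g_k$, so
\[ R_{\mathcal G}^*R_{\mathcal G} \;=\; \sum_{k=0}^{p-1}[T_\eta]_k^* g_k^2 [T_\eta]_k \;+\; \sum_{k \ne k'}[T_\eta]_{k'}^* g_{k'}g_k [T_\eta]_k. \]
Each $[T_\eta]_k$ is an isometry (Corollary \ref{techcorone}), so each of the $p(p-1)$ terms in the second sum has norm $\le \|g_{k'}g_k\| \le \delta$, and the second sum has norm $\le p(p-1)\delta < p^2\delta$. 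For the first sum I would use \eqref{TiAdjoint}, the explicit action of $T_\eta^*$ on elementary tensors, and the convention that $[T_\eta]_k$ vanishes on $\bigoplus_{W_{k'}}\mathcal F(\mathcal K)$ for $k' \ne k$, to compute that $[T_\eta]_k^* g_k^2 [T_\eta]_k$ sends $(x_\eta)_{\eta \in W_k} \mapsto (\sum_\eta \langle \eta', g_k^2\ldotp \eta\rangle \ldotp x_\eta)_{\eta' \in W_k}$ and kills the other blocks; matching this against \eqref{i-love-lamp-eq} and the identification $\mathcal F_p(\mathcal K) \cong \bigoplus_{W_{<p}} A$ underlying Corollary \ref{techcortwo} shows it is precisely the image of $g_k^2|_{\mathcal K^{\otimes k}}$ in $\mathbb B(\bigoplus_{W_{<p}}\mathcal F(\mathcal K))$. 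Summing over $k$, the first sum equals $\Delta_{\mathcal G}$, so $R_{\mathcal G}^*R_{\mathcal G} \approx_{p^2\delta} \Delta_{\mathcal G}$, which is (1).

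For part (2), set $e := e_{w,z}\otimes 1_{\mathcal K^{\otimes k}}$, a contraction since $w$ and $z$ are unit vectors. By part (1), $\|[R_{\mathcal G}^*R_{\mathcal G} - \Delta_{\mathcal G}, e]\| \le 2\|e\|\,\|R_{\mathcal G}^*R_{\mathcal G} - \Delta_{\mathcal G}\| < 2p^2\delta$, so it suffices to show $\|[\Delta_{\mathcal G}, e]\| \le 4\delta$; this gives $\|[R_{\mathcal G}^*R_{\mathcal G}, e]\| < 2p^2\delta + 4\delta = 2(p^2+2)\delta$. Now $e$ maps $\mathcal K^{\otimes(|z|+k)}$ into $\mathcal K^{\otimes(|w|+k)}$ and annihilates every other summand of $\mathcal F_p(\mathcal K)$, so only the $l = |w|+k$ and $l = |z|+k$ terms of $\Delta_{\mathcal G}$ contribute and $[\Delta_{\mathcal G}, e] = g_{|w|+k}^2\ldotp e - e\ldotp g_{|z|+k}^2$. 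Introducing $\theta_x \colon A \to \mathcal K^{\otimes|x|}$, $\theta_x(b) = x\ldotp b$, so that $e_{x,y} = \theta_x\theta_y^*$ and $a\ldotp\theta_x = \theta_{a\ldotp x}$, this rewrites as $e_{g_{|w|+k}^2\ldotp w,\,z}\otimes 1_{\mathcal K^{\otimes k}} - e_{w,\,g_{|z|+k}^2\ldotp z}\otimes 1_{\mathcal K^{\otimes k}}$. Applying the intertwining hypotheses with $l = k$ twice yields $g_{|w|+k}^2\ldotp w \approx_{2\delta} w\ldotp g_k^2$ and $g_{|z|+k}^2\ldotp z \approx_{2\delta} z\ldotp g_k^2$; since the identity $e_{x\ldotp c,y} = e_{x,\,y\ldotp c^*}$ (with $c = g_k^2$) makes the resulting main terms $e_{w\ldotp g_k^2,\,z}$ and $e_{w,\,z\ldotp g_k^2}$ coincide, what is left is $[\Delta_{\mathcal G}, e] = e_{a,z}\otimes 1_{\mathcal K^{\otimes k}} - e_{w,b}\otimes 1_{\mathcal K^{\otimes k}}$ with $\|a\|, \|b\| \le 2\delta$, so $\|[\Delta_{\mathcal G}, e]\| \le \|a\|\|z\| + \|w\|\|b\| \le 4\delta$.

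The step I expect to be the main obstacle is the bookkeeping in the diagonal computation of part (1): one must chase the identification $\mathcal F_p(\mathcal K) \cong \bigoplus_{W_{<p}} A$, the embeddings of Lemma \ref{i-love-lamp} and Corollary \ref{techcortwo}, and the extend-by-zero convention for $[T_\eta]_k$ carefully enough to recognize $[T_\eta]_k^* g_k^2 [T_\eta]_k$ as \emph{equal} to the image of $g_k^2|_{\mathcal K^{\otimes k}}$, not merely something resembling it. Part (2) is comparatively routine once the two surviving blocks of $\Delta_{\mathcal G}$ are isolated: passing to $e_{x,y} = \theta_x\theta_y^*$ absorbs the left $A$-action into the legs of the rank-one operators, the main terms then cancel via $e_{x\ldotp c,y} = e_{x,\,y\ldotp c^*}$, and the approximate intertwining relations supply the $4\delta$ error directly.
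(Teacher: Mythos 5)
Your proposal is correct and follows essentially the same route as the paper: in (1) both arguments expand $R_{\mathcal G}^*R_{\mathcal G}=\sum_{k,k'}[T_\eta]_{W_k}^*g_kg_{k'}[T_\eta]_{W_{k'}}$, identify the diagonal part with $\sum_k g_k^2|_{\mathcal K^{\otimes k}}$, and bound the $p(p-1)$ cross terms by $\delta$ each. For (2) you merely repackage the paper's chain of estimates (which passes from $e\,R_{\mathcal G}^*R_{\mathcal G}$ to $R_{\mathcal G}^*R_{\mathcal G}\,e$ through $e_{w,z\ldotp g_k^2}=e_{w\ldotp g_k^2,z}$ and the intertwining relations) as ``replace $R_{\mathcal G}^*R_{\mathcal G}$ by $\Delta_{\mathcal G}$ in the commutator, then bound $\|[\Delta_{\mathcal G},e]\|\le 4\delta$,'' giving the identical error $2(p^2+2)\delta$ from the identical ingredients.
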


\begin{proof}
(1):
For $(x_\zeta)_{\zeta\in W_{<p}} \in \bigoplus_{W_{<p}} \mathcal F(\mathcal K)$,
\begin{align*}
[T_\eta]_{W_k}^*g_k^2[T_\eta]_{W_k}(x_\zeta)_{\zeta\in W_{<p}}
&= [T_\eta]_{W_k}^*\sum_{\zeta \in W_k} g_k\cdot \zeta \otimes x_\zeta \\
&= (\chi_{W_k}(\zeta) g_k^2\cdot x_\zeta)_{\zeta \in W_{<p}},
\end{align*}
and therefore,
\begin{align*}
g_0^2|_{\mathcal{K}^{\otimes 0}} + \cdots + g_{p-1}^2|_{\mathcal{K}^{\otimes p-1}}
&= \sum_k [T_\eta]_{W_k}^*g_k^2[T_\eta]_{W_k} \\
&\approx_{p^2\delta} \sum_{k,k'} [T_\eta]_{W_k}^*g_kg_{k'}[T_\eta]_{W_{k'}} \\
&= R_{\mathcal G}^*R_{\mathcal G}.
\end{align*}
where $\zeta\in W_k$ and $x\in \mathcal{F}(\mathcal{K})$. 

(2):
Using (1) we have
\begin{align*}
(e_{w,z}\otimes 1_{\mathcal{K}^{\otimes k}})(R_{\mathcal G}^*R_{\mathcal G}) & \approx_{p^2\delta}(e_{w,z}\otimes 1_{\mathcal{K}^{\otimes k}})(g_0^2|_{\mathcal{K}^{\otimes 0}} + \cdots + g_{p-1}^2|_{\mathcal{K}^{\otimes p-1}}) \\ 
& = (e_{w,z}\otimes 1_{\mathcal{K}^{\otimes k}})(g^2_{k + \lvert z\rvert}|_{\mathcal{K}^{\otimes k + \lvert z\rvert}}) \\
& = e_{w,g^2_{k + \lvert z\rvert}\ldotp z}\otimes 1_{\mathcal{K}^{\otimes k}} \\ 
& \approx_{2\delta} e_{w,z\ldotp g_k^2}\otimes 1_{\mathcal{K}^{\otimes k}} \\ 
& = e_{w\ldotp g^2_k,z}\otimes 1_{\mathcal{K}^{\otimes k}} \\ 
&  \approx_{2\delta} e_{g^2_{k + \lvert w\rvert}\ldotp w,z}\otimes 1_{\mathcal{K}^{\otimes k}} \\ 
& = (g^2_{k + \lvert w\rvert}|_{\mathcal{K}^{\otimes k + \lvert w\rvert}})(e_{w,z}\otimes 1_{\mathcal{K}^{\otimes k}}) \\ 
& = (g^2_0|_{\mathcal{K}^{\otimes 0}} + \cdots + g^2_{p-1}|_{\mathcal{K}^{\otimes p-1}})e_{w,z}\otimes 1_{\mathcal{K}^{\otimes k}} \\ 
& \approx_{p^2\delta} (R_{\mathcal G}^*R_{\mathcal G})(e_{w,z}\otimes 1_{\mathcal{K}^{\otimes k}}). 
\end{align*}
\end{proof}
\subsubsection{Countably generated correspondences}\label{technical}

Let $\mathcal{H}$ be a countably generated correspondence over $A$. By Kasparov's Stabilization Theorem (Theorem \ref{Kasparov's Stabilization Theorem}), there is a countably generated free Hilbert $A$-module $\mathcal{H}'$ such that their direct sum $\mathcal{K} = \mathcal{H}\oplus\mathcal{H}'$ is free. Choose a left action of $A$ on $\mathcal{H}'$ such that $A\cap\mathbb{K}(\mathcal{H}') = \{0\}$ (one may take, for example, the canonical left action of $A$ on $\bigoplus_{\mathbb N} A$). The diagonal action of $A$ turns $\mathcal{K}$ into a correspondence. The orthogonal projection $P_{\mathcal{H}} \in \mathbb{B}(\mathcal{K})$ onto $\mathcal{H}\oplus 0$ commutes with the image of the left action of $A$ on $\mathcal{K}$, so for $k > 0$ the map 
\begin{equation*}
P_{\mathcal{H}}^{\otimes k} : x_1\otimes\cdots\otimes x_k \mapsto P_{\mathcal{H}}x_1\otimes \cdots \otimes P_{\mathcal{H}}x_k
\end{equation*}
is an orthogonal projection in $\mathbb{B}(\mathcal{K}^{\otimes k})$. 

\begin{lem}
Let $A$ be a $C^*$-algebra, let $\mathcal H$ be a countably generated correspondence over $A$, and let $\mathcal K$ and $P_{\mathcal H}$ be as above.
Allowing $P_{\mathcal{H}}^{\otimes 0}$ to mean the identity map on $A$, we have
\begin{enumerate}
\item $\mathcal{H}^{\otimes k} = P_{\mathcal{H}}^{\otimes k}(\mathcal{K}^{\otimes k})$,
\item $\mathcal{F}_p(\mathcal{H}) = (\sum_{k=0}^{p-1}P_{\mathcal{H}}^{\otimes k})(\mathcal{F}_p(\mathcal{K}))$, and 
\item $\mathcal{F}(\mathcal{H}) = (\sum_{k\ge 0}P_{\mathcal{H}}^{\otimes k})(\mathcal{F}(\mathcal{K}))$. 
\end{enumerate}
\end{lem}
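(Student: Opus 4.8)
The plan is to prove part (1) directly and then obtain (2) and (3) by forming direct sums. For (1), I would first record that the canonical map $\iota\colon\mathcal H\to\mathcal K$, $x\mapsto x\oplus 0$, is an isometric morphism of correspondences: it preserves the $A$-valued inner product, and it intertwines the left actions because the left action on $\mathcal K$ is the diagonal one. Since the $A$-valued inner product on an interior tensor product is built out of inner products and the left action via \eqref{TensorIP}, the map $x_1\otimes\cdots\otimes x_k\mapsto\iota(x_1)\otimes\cdots\otimes\iota(x_k)$ preserves the relevant semi-inner product and hence extends to an isometric morphism of correspondences $\iota^{\otimes k}\colon\mathcal H^{\otimes k}\to\mathcal K^{\otimes k}$; we use $\iota^{\otimes k}$ to identify $\mathcal H^{\otimes k}$ with a closed submodule of $\mathcal K^{\otimes k}$ (for $k=0$ this identification is just $A=A$, with $P_{\mathcal H}^{\otimes 0}=\mathrm{id}_A$).

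Next I would compute the range of the projection $P_{\mathcal H}^{\otimes k}$. Since $P_{\mathcal H}$ acts as the identity on $\mathcal H\subseteq\mathcal K$ and maps $\mathcal K$ onto $\mathcal H$, the defining formula $P_{\mathcal H}^{\otimes k}(x_1\otimes\cdots\otimes x_k)=P_{\mathcal H}x_1\otimes\cdots\otimes P_{\mathcal H}x_k$ shows that $P_{\mathcal H}^{\otimes k}$ fixes every elementary tensor with entries in $\mathcal H$ and carries every elementary tensor of $\mathcal K^{\otimes k}$ to one with entries in $\mathcal H$. As elementary tensors span a dense subspace of $\mathcal K^{\otimes k}$, as $P_{\mathcal H}^{\otimes k}$ is continuous, and as the range of a projection on a Hilbert module is closed, the range of $P_{\mathcal H}^{\otimes k}$ equals the closure of $\mathrm{span}\{y_1\otimes\cdots\otimes y_k\mid y_i\in\mathcal H\}$, which is precisely $\mathcal H^{\otimes k}$. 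This proves (1).

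For (2), one has $\mathcal F_p(\mathcal H)=\bigoplus_{k=0}^{p-1}\mathcal H^{\otimes k}$ and $\mathcal F_p(\mathcal K)=\bigoplus_{k=0}^{p-1}\mathcal K^{\otimes k}$, and with respect to these decompositions $\sum_{k=0}^{p-1}P_{\mathcal H}^{\otimes k}$ is the block-diagonal operator whose $k$-th block is $P_{\mathcal H}^{\otimes k}$; being a finite sum of projections supported on mutually orthogonal summands it is itself a projection, and its range is $\bigoplus_{k=0}^{p-1}P_{\mathcal H}^{\otimes k}(\mathcal K^{\otimes k})=\bigoplus_{k=0}^{p-1}\mathcal H^{\otimes k}=\mathcal F_p(\mathcal H)$ by (1). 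Item (3) is the same argument with $p=\infty$: since the projections $P_{\mathcal H}^{\otimes k}$ are supported on the mutually orthogonal direct summands $\mathcal K^{\otimes k}$ of $\mathcal F(\mathcal K)$, the partial sums of $\sum_{k\ge 0}P_{\mathcal H}^{\otimes k}$ converge strictly in $\mathbb B(\mathcal F(\mathcal K))$ to a projection whose range is $\bigoplus_{k\ge 0}\mathcal H^{\otimes k}=\mathcal F(\mathcal H)$. I do not expect a genuine obstacle here; the only points requiring a little care are the compatibility of the inner products under $\mathcal H^{\otimes k}\hookrightarrow\mathcal K^{\otimes k}$, which is what makes the asserted equalities literal equalities of submodules of $\mathcal K^{\otimes k}$, and the strict convergence of the infinite sum in (3).
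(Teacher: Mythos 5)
Your proposal is correct and follows essentially the same route as the paper: the paper's proof also uses the map $x_1\otimes\cdots\otimes x_k\mapsto (x_1,0)\otimes\cdots\otimes(x_k,0)$ to identify $\mathcal H^{\otimes k}$ with the range of $P_{\mathcal H}^{\otimes k}$, deduces (2) from the finiteness of the sum, and handles (3) via strict convergence of $\sum_{k\geq 0}P_{\mathcal H}^{\otimes k}$. You simply spell out the inner-product compatibility and the density/closed-range argument that the paper leaves implicit.
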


\begin{proof}
For the first part, the map $V_k: x_1\otimes \cdots \otimes x_k \mapsto (x_1, 0)\otimes \cdots\otimes (x_k, 0)$ implements an isomorphism between the $k$-fold tensor product $\mathcal{H}^{\otimes k}$ and the image of $P_{\mathcal{H}}^{\otimes k}$ in $\mathcal{K}^{\otimes k}$. The second part follows easily from the first since the sums are finite. Lastly, it is clear that the series $\sum_{k\ge 0}P_{\mathcal{H}}^{\otimes k}$ converges strictly to a projection in $\mathbb{B}(\mathcal{F}(\mathcal{K}))$ with the desired property. 
\end{proof}

Denote the projections $\sum_{k=0}^{p-1}P_{\mathcal{H}}^{\otimes k}$ and $\sum_{k\ge 0}P_{\mathcal{H}}^{\otimes k}$ given above by $P_{\mathcal{F}_p(\mathcal{H})}$ and $P_{\mathcal{F}(\mathcal{H})}$, respectively. If $z = (x_1,y_1)\otimes\cdots\otimes (x_k,y_k)\in \mathcal{K}^{\otimes k}$ is an elementary tensor, we say that $z$ is an \emph{$\mathcal{H}$-elementary tensor} if $y_1 = \cdots = y_k = 0$. Equivalently, $z$ is $\mathcal{H}$-elementary if $P_{\mathcal{H}}^{\otimes k}(z) = z$. 

\begin{prop}\label{sumuniversecuntz}
Let $(\pi,\tau)$ be the representation of $\mathcal{H}$ in $\mathcal{O}(\mathcal{K})$ given by $\pi: a\mapsto a$ and $\tau: x\mapsto S_{(x,0)}$. Then there is a *-isomorphism $\theta: C^*(\pi,\tau) \rightarrow \mathcal{T}(\mathcal{H})$ sending $a$ to $a$ and $S_{(x,0)}$ to $T_x$. 
\end{prop}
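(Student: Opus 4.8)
The plan is to apply the uniqueness part of Theorem \ref{Toeplitz-Properties}(2): to show that $C^*(\pi,\tau)$ is isomorphic to $\mathcal T(\mathcal H)$ via a map fixing $A$ and sending $S_{(x,0)}$ to $T_x$, it suffices to check that $(\pi,\tau)$ is a representation of $\mathcal H$ on $\mathcal O(\mathcal K)$, that it admits a gauge action, and that the covariance-failure condition $\pi(A) \cap \overline{\mathrm{span}}\{\tau(x)\tau(y)^* \mid x,y \in \mathcal H\} = 0$ holds. The first point is essentially formal: $\pi$ is a unital $*$-homomorphism (the quotient map $Q$ is injective on $A$ because $I_{\mathcal K} = A \cap \mathbb K(\mathcal K) = 0$ by the choice of left action on $\mathcal H'$, via Remark \ref{Cuntz=Pimsner} and Proposition \ref{useful_cuntz}), and $\tau(x) = S_{(x,0)}$ is linear in $x$ with $\tau(a\ldotp x\ldotp b) = \pi(a)\tau(x)\pi(b)$ and $\tau(x)^*\tau(y) = S_{(x,0)}^*S_{(y,0)} = \langle (x,0),(y,0)\rangle_{\mathcal K} = \langle x,y\rangle_{\mathcal H}$, all immediate from Theorem \ref{Cuntz-Universal}. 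For the gauge action: $\mathcal O(\mathcal K)$ itself carries a gauge action $\gamma$ (with $\gamma_z(a) = a$, $\gamma_z(S_w) = zS_w$), and its restriction to the subalgebra $C^*(\pi,\tau)$ — which is visibly $\gamma$-invariant since it is generated by $A$ and the $S_{(x,0)}$ — gives the required action.

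The crux is the covariance-failure condition. Here I would exploit the compression projection $P_{\mathcal F(\mathcal H)} \in \mathbb B(\mathcal F(\mathcal K))$ from the lemma preceding the statement. The key observation is that compression by $P := P_{\mathcal F(\mathcal H)}$ implements the Fock representation of $\mathcal T(\mathcal H)$ sitting inside $\mathbb B(\mathcal F(\mathcal K))$: for $x \in \mathcal H$ one has $P T_{(x,0)} P = T_{(x,0)} P$ acting as the creation operator by $(x,0)$ on the invariant submodule $\mathcal F(\mathcal H) = P(\mathcal F(\mathcal K))$, which under the identification $V_k$ is just $T_x$ on $\mathcal F(\mathcal H)$; and $PaP = aP$ acts as left multiplication by $a$ on $\mathcal F(\mathcal H)$. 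Thus there is a $*$-homomorphism $\Phi: \mathcal T(\mathcal H) \to \mathbb B(\mathcal F(\mathcal K))$, $\Phi(S) = $ (the extension by $0$ of the Fock action of $S$ on $\mathcal F(\mathcal H)$), with $\Phi(T_x) = P T_{(x,0)} P$ and $\Phi(a) = aP$, and $\Phi$ is injective since it is essentially the defining Fock representation of $\mathcal T(\mathcal H)$. Composing with $Q$, or rather comparing with the putative inverse map, one checks that $a \mapsto a$, $S_{(x,0)} \mapsto T_x$ does extend to a $*$-homomorphism $\theta: C^*(\pi,\tau) \to \mathcal T(\mathcal H)$; the point is to see it is a bijection.

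Concretely, I would verify the covariance-failure condition as follows. An element of $\overline{\mathrm{span}}\{\tau(x)\tau(y)^* \mid x,y \in \mathcal H\} = \overline{\mathrm{span}}\{S_{(x,0)}S_{(y,0)}^*\}$ that also lies in $\pi(A) = A$ must, after lifting to $\mathcal T(\mathcal K)$ modulo $\mathbb K(\mathcal F(\mathcal K)I_{\mathcal K}) = \mathbb K(\mathcal F(\mathcal K) \cdot 0) = 0$ (again using $I_{\mathcal K}=0$), be a genuine element of $\mathcal T(\mathcal K) \subset \mathbb B(\mathcal F(\mathcal K))$ lying in $A \cap \overline{\mathrm{span}}\{T_{(x,0)}T_{(y,0)}^*\}$. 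Using \eqref{Strict-Expansion} one has $T_{(x,0)}T_{(y,0)}^* = \sum_k e_{(x,0),(y,0)} \otimes 1_{\mathcal K^{\otimes k}}$, which shifts length down by $|y| \geq 1$ on length-$|y|$ tensors and annihilates shorter ones, so it maps $\mathcal K^{\otimes 0} = A$ into $\{0\}$; hence any norm-limit of sums of such terms kills $A \subset \mathcal F(\mathcal K)$, whereas a nonzero element of $A$ acting by left multiplication does not kill $A$ (as $A$ is unital). So the intersection is $\{0\}$. With all hypotheses of Theorem \ref{Toeplitz-Properties}(2) verified, the isomorphism $\theta$ with the stated behaviour on generators follows. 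The step I expect to be the main obstacle is making the identification $PT_{(x,0)}P \leftrightarrow T_x$ and the lifting-to-$\mathcal T(\mathcal K)$ argument fully rigorous — i.e., carefully tracking the roles of $Q$, $P$, and the isomorphisms $V_k$ so that one genuinely lands in $\mathcal T(\mathcal H)$ and not just some abstract algebra, and confirming $A\cap\mathbb K(\mathcal H')=0$ is the right hypothesis to make the $\mathcal O(\mathcal K) \cong \mathcal T(\mathcal K)$ identification available.
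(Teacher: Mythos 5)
Your proposal is correct and follows essentially the same route as the paper: both rest on checking $A\cap\mathbb{K}(\mathcal{K})=\{0\}$ (from the choice of left action on $\mathcal{H}'$) so that Remark \ref{Cuntz=Pimsner} identifies $\mathcal{O}(\mathcal{K})$ with $\mathcal{T}(\mathcal{K})$, verifying the condition $\pi(A)\cap\overline{\mathrm{span}}\{\tau(x)\tau(y)^*\}=\{0\}$ on the Fock space, and invoking Theorem \ref{Toeplitz-Properties}(2) together with the restricted gauge action (the paper merely assembles these in the Toeplitz picture first and then composes with $\bar{\theta}$). Your middle paragraph about the compression by $P_{\mathcal{F}(\mathcal{H})}$ is superfluous once the uniqueness theorem is invoked, but it does no harm.
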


\begin{proof}
Consider first the representation $(\tilde{\pi},\tilde{\tau})$ of $\mathcal{H}$ in $\mathcal{T}(\mathcal{K})$ given by $\tilde{\pi}: a\mapsto a$ and $\tilde{\tau}: x\mapsto T_{(x,0)}$. It is clear that 
\begin{equation*}
\tilde{\pi}(A)\cap \overline{\mathrm{span}}\{\tilde{\tau}(x)\tilde{\tau}(y)^* \ : \ x,y\in \mathcal{H}\} = \{0\}.
\end{equation*}
If we restrict the gauge action on $\mathcal{T}(\mathcal{K})$ to $C^*(\pi,\tau)$, Theorem \ref{Toeplitz-Properties} (2) gives a *-isomorphism $\tilde{\theta}: C^*(\tilde{\pi},\tilde{\tau}) \rightarrow \mathcal{T}(\mathcal{H})$ sending $a$ to $a$ and $T_{(x,0)}$ to $T_x$. Now, since the left action of $A$ on $\mathcal{H}'$ was defined so that $A\cap \mathbb{K}(\mathcal{H}') = \{0\}$, each $a\in A\subset \mathbb{B}(\mathcal{K})$ differs from any finite sum of rank-one operators by at least $\lVert a\rVert$. Therefore, $A\cap \mathbb{K}(\mathcal{K}) = \{0\}$ and so by Remark \ref{Cuntz=Pimsner} there is a *-isomorphism $\bar{\theta}: \mathcal{O}(\mathcal{K}) \rightarrow \mathcal{T}(\mathcal{K})$ sending $a$ to $a$ and $S_{(x,y)}$ to $T_{(x,y)}$. Moreover, it's clear that $\bar{\theta}(C^*(\pi,\tau)) = C^*(\tilde{\pi},\tilde{\tau})$ so taking $\theta = \tilde{\theta}\circ \bar{\theta}|_{\scriptscriptstyle{C^*(\pi,\tau)}}$ completes the proof. Here is a diagram illustrating the maps involved.

\begin{center}
\begin{tikzpicture}[descr/.style={fill=white,inner sep=2.5pt}]
\matrix (m) [matrix of math nodes, row sep=4em, column sep=9em]
{
(A,\mathcal{H})  & C^*(\pi,\tau) \subset \mathcal{O}(\mathcal{K}) \\
\mathcal{T}(\mathcal{H}) & C^*(\tilde{\pi},\tilde{\tau})\subset \mathcal{T}(\mathcal{K})\\ 
};
\path[->]
(m-1-1) edge node[auto] {$ (\pi,\tau)$} (m-1-2)
(m-1-1) edge node[auto,swap] {$(a\mapsto a,x\mapsto T_x)$} (m-2-1)
(m-1-2) edge node[auto] {$\bar{\theta}|_{\scriptscriptstyle{C^*(\pi,\tau)}}$} (m-2-2)
(m-2-2) edge node[auto] {$\tilde{\theta}$} (m-2-1)
(m-1-1) edge node[descr] {$ (\tilde{\pi},\tilde{\tau}) $} (m-2-2);
\end{tikzpicture}
\end{center}
\end{proof}

\begin{lem}\label{embed}
Let $A$ be a $C^*$-algebra, let $\mathcal H$ be a countably generated correspondence over $A$, and let $\mathcal K$ be as above.
Let $D_p^0(\mathcal{K})$ be the $C^*$-subalgebra of $D_p(\mathcal{K})$ generated by elements of the form $e_{x,y}\otimes 1_{\mathcal{K}^{\otimes k}}$ where $x$ and $y$ are $\mathcal{H}$-elementary tensors. Then the map
\begin{equation*}
\gamma: e_{x_1\otimes\cdots\otimes x_k,y_1\otimes\cdots\otimes y_l}\mapsto e_{(x_1,0)\otimes\cdots\otimes (x_k,0), (y_1,0)\otimes\cdots\otimes(y_l,0)}
\end{equation*}
extends to an isomorphism $\gamma: D_p(\mathcal{H}) \rightarrow D_p^0(\mathcal{K})$. 
In particular if for each $p$, there is an approximate unit of projections in $\mathbb K(\mathcal F_p(\mathcal H))$ which is quasicentral in $D_p(\mathcal H)$, then $\dim_{\mathrm{nuc}}(D_p^0(\mathcal{K})) \le \dim_{\mathrm{nuc}}(A)$. 
\end{lem}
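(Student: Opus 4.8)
The plan is to prove this in two steps: first, that $\gamma$ as defined extends to a well-defined $*$-isomorphism $D_p(\mathcal{H}) \to D_p^0(\mathcal{K})$, and second, that under the quasidiagonal-like hypothesis this isomorphism transports the nuclear dimension bound from Lemma \ref{quasi_hypothesis} (or rather, a version of it relative to $D_p^0(\mathcal K)$). For the first step, I would proceed by unwinding the definitions. The isometry $V_k: \mathcal{H}^{\otimes k} \to \mathcal{K}^{\otimes k}$, $x_1\otimes\cdots\otimes x_k \mapsto (x_1,0)\otimes\cdots\otimes (x_k,0)$, appearing in the proof of the Lemma preceding Proposition \ref{sumuniversecuntz}, identifies $\mathcal{H}^{\otimes k}$ with the range of $P_{\mathcal{H}}^{\otimes k}$. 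Assembling these gives an isometry $V = \bigoplus_k V_k$ from $\mathcal F(\mathcal H)$ onto $P_{\mathcal F(\mathcal H)}\mathcal F(\mathcal K)$, and hence a map $\operatorname{Ad}V: \mathbb{B}(\mathcal F(\mathcal H)) \to P_{\mathcal F(\mathcal H)}\mathbb{B}(\mathcal F(\mathcal K))P_{\mathcal F(\mathcal H)}$, $T \mapsto VTV^*$. One checks directly from \eqref{RankOneFormula} that this carries $e_{x,y}$ (for elementary tensors $x,y$ in $\mathcal F_p(\mathcal H)$) to $e_{\gamma(x),\gamma(y)}$ and, more importantly, commutes with the operation $(\,\cdot\,)\otimes 1_{\mathcal{K}^{\otimes k}}$ in the sense that $V(e_{x,y}\otimes 1_{\mathcal{H}^{\otimes k}})V^* = e_{\gamma(x),\gamma(y)}\otimes 1_{\mathcal{K}^{\otimes k}}$ — this is because $P_{\mathcal{H}}^{\otimes k}$ commutes with the left $A$-action and the amplifications $(\,\cdot\,)\otimes 1$ are compatible with cutting down by the $P_{\mathcal{H}}^{\otimes k}$. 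It follows that $\operatorname{Ad}V$ restricts to a $*$-isomorphism from $D_p(\mathcal{H})$ onto the $C^*$-algebra generated by the $e_{\gamma(x),\gamma(y)}\otimes 1_{\mathcal{K}^{\otimes k}}$, which is exactly $D_p^0(\mathcal{K})$ by definition, and on generators it agrees with $\gamma$; hence $\gamma$ is well-defined and is this isomorphism.

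For the ``in particular'' clause, I would argue that the hypothesis about approximate units of projections in $\mathbb{K}(\mathcal F_p(\mathcal H))$ which are quasicentral in $D_p(\mathcal H)$ is exactly the hypothesis of Lemma \ref{quasi_hypothesis}, which therefore gives $\dim_{\mathrm{nuc}}(D_p(\mathcal H)) \le \dim_{\mathrm{nuc}}(A)$; since $D_p^0(\mathcal{K}) \cong D_p(\mathcal{H})$ via $\gamma$ and nuclear dimension is a $*$-isomorphism invariant, we conclude $\dim_{\mathrm{nuc}}(D_p^0(\mathcal{K})) \le \dim_{\mathrm{nuc}}(A)$.

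The step I expect to be the main obstacle is verifying that $\gamma$ genuinely \emph{extends} to a $*$-homomorphism, i.e., checking that the assignment on the spanning rank-one-times-amplification generators is compatible with the $C^*$-algebra structure — concretely, that it respects products of the form $(e_{x,y}\otimes 1_{\mathcal{K}^{\otimes k}})(e_{x',y'}\otimes 1_{\mathcal{K}^{\otimes k'}})$, which involve the somewhat delicate bookkeeping of \eqref{Strict-Expansion} and the balancing relation \eqref{TensorBalance} for how $e_{x,y}\otimes 1$ acts on longer tensors. Realizing $\gamma$ as a spatially implemented map $\operatorname{Ad}V$ sidesteps this entirely: a spatial map is automatically a $*$-homomorphism, so one only needs to identify its behaviour on generators and its image, both of which are routine from the formulas. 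The one genuinely substantive point is that $V$ intertwines the two families of amplified rank-one operators, which reduces to the fact that $P_{\mathcal F(\mathcal H)}$ (equivalently each $P_{\mathcal H}^{\otimes k}$) lies in the commutant of the left $A$-action and interacts correctly with the maps $T \mapsto T\otimes 1_{\mathcal{K}}$; this was already observed in the text preceding Proposition \ref{sumuniversecuntz}. Everything else is bookkeeping with the identifications $\mathcal{H}^{\otimes k} = P_{\mathcal{H}}^{\otimes k}(\mathcal{K}^{\otimes k})$ from the Lemma above.
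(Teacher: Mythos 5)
Your proposal is correct and takes essentially the same route as the paper: there, too, $\gamma$ is realized spatially, as conjugation by the isometry $\bar V=\sum_{k=0}^{p-1}V_k$, verified on the generators $e_{x,y}\otimes 1$, and the nuclear dimension bound then follows from Lemma \ref{quasi_hypothesis} together with isomorphism invariance. The only cosmetic difference is that the paper implements the conjugation on the cutoff Fock space $\mathcal{F}_p(\mathcal{H})$ (on which $D_p(\mathcal{H})$ acts by definition) rather than on the full Fock space $\mathcal{F}(\mathcal{H})$ as you do, but the argument is unchanged after this adjustment.
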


\begin{proof}
The map $\bar{V} = \sum_{k=0}^{p-1}V_k$ sending $a$ to $a$ and $x_1\otimes\cdots\otimes x_k$ to $(x_1,0)\otimes \cdots\otimes (x_k,0)$ extends to a unitary from $\mathcal{F}_p(\mathcal{H})$ to $P_{\mathcal{F}_p(\mathcal{H})}(\mathcal{F}_p(\mathcal{K}))$. Let $l,m,k \ge 0$ satisfy $0 \le \max\{l,m\} + k < p$ and let $x,y,w,z \in\mathcal{F}_p(\mathcal{H})$ be elementary tensors of lengths $l,m,m,$ and $k$, respectively. We have 
\begin{align*}
\bar{V}^*e_{\bar{V}x,\bar{V}y}\otimes 1_{\mathcal{K}^{\otimes k}}\bar{V}(w\otimes z) & = \bar{V}^*(\bar{V}x\ldotp \langle \bar{V}y,\bar{V}w \rangle\otimes \bar{V}z)\\
& = \bar{V}^*(\bar{V}x\ldotp \langle y,w\rangle \otimes \bar{V}z) \\ 
& = e_{x,y}\otimes 1_{\mathcal{H}^{\otimes k}}(w\otimes z),
\end{align*}
so $\gamma$ is implemented by unitary conjugation and the result follows. 
\end{proof}

\begin{cor}\label{spock_cor}
Let $A$ be a $C^*$-algebra, let $\mathcal H$ be a countably generated correspondence over $A$, and let $\mathcal K$ be as above.
Let $\mathcal{G} = (g_0,\ldots,g_{p-1})$ be a tuple of positive contractions in $A$.
There is a c.p.\ map $\bar\rho_{\mathcal G}:D_p(\mathcal{H}) \rightarrow \mathcal{T}(\mathcal{H})$ such that, for elementary tensors $x,y$ in $\mathcal{F}_p(\mathcal{H})$ and $k$ such that $\max\{\lvert x\rvert,\lvert y\rvert\}+k<p$,
\begin{equation}
\label{barrhoeq}
\bar\rho_{\mathcal G}(e_{x,y}\otimes 1_{\mathcal{H}^{\otimes k}}) = g_{k + \lvert x\rvert}T_xT_y^*g_{k+\lvert y\rvert},
\end{equation}
and the following commutes
\begin{center}
\begin{tikzpicture}
\matrix (m) [matrix of math nodes, row sep=2em,column sep=8em, text height=1.5ex, text depth=0.25ex]
{
D_p(\mathcal{H}) & D_p^0(\mathcal{K})\subset D_p(\mathcal{K}) \\
\mathcal{T}(\mathcal{H}) & C^*(\pi,\tau)\subset \mathcal{O}(\mathcal{K}) \\
};
\path[->,font=\small]
(m-1-1) edgenode[auto] {$\gamma$ (Lemma \ref{embed})} (m-1-2)
(m-1-1) edgenode[auto] {$\bar{\rho}_{\scriptscriptstyle{\mathcal{G}}}$} (m-2-1)
(m-1-2) edgenode[auto] {$\rho_{\scriptscriptstyle{\mathcal{G}}}$ (Lemma \ref{maps})} (m-2-2)
(m-2-2) edgenode[auto] {$\theta$ (Prop.\ \ref{sumuniversecuntz})} (m-2-1);
\end{tikzpicture}
\end{center}
\end{cor}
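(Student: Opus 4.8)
The plan is to define $\bar\rho_{\mathcal G}$ to be the composite around the left and bottom edges of the square, namely
\[ \bar\rho_{\mathcal G} := \theta \circ \rho_{\mathcal G}|_{D_p^0(\mathcal K)} \circ \gamma, \]
where $\gamma$ is the isomorphism of Lemma \ref{embed}, $\rho_{\mathcal G}$ is the c.p.\ map of Lemma \ref{maps}, and $\theta$ is the $*$-isomorphism of Proposition \ref{sumuniversecuntz}. With this definition the diagram commutes by construction, so it only remains to see that the composite makes sense, is completely positive, and has the claimed effect on generators.

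For well-definedness I must check that $\rho_{\mathcal G}$ carries $D_p^0(\mathcal K)$ into the domain $C^*(\pi,\tau)$ of $\theta$. Under the identifications of Proposition \ref{sumuniversecuntz} one has $S_{(x,0)} = \tau(x)$ for $x \in \mathcal H$, so for an $\mathcal H$-elementary tensor $x = (x_1,0)\otimes\cdots\otimes(x_l,0)$ one gets $S_x = \tau(x_1)\cdots\tau(x_l) \in C^*(\pi,\tau)$; combining this with the formula of Lemma \ref{maps} and the fact that $g_{k+|x|}, g_{k+|y|} \in \pi(A) \subseteq C^*(\pi,\tau)$ shows that $\rho_{\mathcal G}$ sends every generator $e_{x,y}\otimes 1_{\mathcal K^{\otimes k}}$ of $D_p^0(\mathcal K)$ (with $x,y$ $\mathcal H$-elementary) into $C^*(\pi,\tau)$. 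Since $\rho_{\mathcal G}$ is bounded and linear and $D_p^0(\mathcal K)$ is the closed linear span of these generators (Lemma \ref{embed}), the same density argument as in the proof of Lemma \ref{maps} gives $\rho_{\mathcal G}(D_p^0(\mathcal K)) \subseteq C^*(\pi,\tau)$; hence $\bar\rho_{\mathcal G}$ is defined and is c.p.\ as a composite of c.p.\ maps ($\gamma$ and $\theta$ being $*$-isomorphisms, $\rho_{\mathcal G}$ being c.p.). To verify \eqref{barrhoeq} I would chase a generator $e_{x,y}\otimes 1_{\mathcal H^{\otimes k}}$ with $x = x_1\otimes\cdots\otimes x_l$: by Lemma \ref{embed}, $\gamma$ sends it to $e_{x',y'}\otimes 1_{\mathcal K^{\otimes k}}$ with $x' = (x_1,0)\otimes\cdots\otimes(x_l,0)$ and $y'$ defined analogously (so $|x'|=|x|$ and $|y'|=|y|$); Lemma \ref{maps} gives $\rho_{\mathcal G}(e_{x',y'}\otimes 1_{\mathcal K^{\otimes k}}) = g_{k+|x|}S_{x'}S_{y'}^* g_{k+|y|}$; and $\theta$ fixes $A$ and sends $S_{(x_i,0)}$ to $T_{x_i}$, so $\theta(S_{x'}) = T_{x_1}\cdots T_{x_l} = T_x$ and likewise $\theta(S_{y'}^*) = T_y^*$, whence $\bar\rho_{\mathcal G}(e_{x,y}\otimes 1_{\mathcal H^{\otimes k}}) = g_{k+|x|}T_xT_y^*g_{k+|y|}$, which is \eqref{barrhoeq}.

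I do not expect any serious obstacle: the corollary is essentially a repackaging of maps already constructed in Lemmas \ref{maps} and \ref{embed} and Proposition \ref{sumuniversecuntz}. The one step that deserves genuine care — isolated in the previous paragraph — is the well-definedness, i.e.\ confirming that $\rho_{\mathcal G}$ restricted to $D_p^0(\mathcal K)$ really lands in the subalgebra $C^*(\pi,\tau)$ on which $\theta$ is defined, and this comes down to recognising that $S_{(x,0)} = \tau(x)$ and invoking continuity of $\rho_{\mathcal G}$.
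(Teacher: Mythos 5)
Your proposal is correct and follows essentially the same route as the paper: define $\bar\rho_{\mathcal G} = \theta\circ\rho_{\mathcal G}\circ\gamma$, check on generators (using the formula of Lemma \ref{maps} and $S_{(x,0)}=\tau(x)$) that $\rho_{\mathcal G}\circ\gamma$ lands in $C^*(\pi,\tau)$, and then apply $\theta$ to obtain \eqref{barrhoeq}. The well-definedness point you isolate is exactly the step the paper's proof handles by the same density/continuity observation.
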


\begin{proof}
Define $\rho_{\mathcal G}:D_p(\mathcal K) \to \mathcal O(\mathcal K)$ by Lemma \ref{maps} and $\gamma:D_p(\mathcal H) \to D_p^0(\mathcal K)$ from Lemma \ref{embed}.
Let $x=x_1 \otimes \cdots \otimes x_l$ and $y=y_1\otimes \cdots \otimes y_m$.
We compute
\begin{align*}
\rho_{\mathcal G}(\gamma(e_{x,y} \otimes 1_{\mathcal H^{\otimes k}}))
&= \rho_{\mathcal G}(\gamma(e_{x_1\otimes\cdots\otimes x_l,y_1\otimes\cdots\otimes y_m} \otimes 1_{\mathcal H^{\otimes k}})) \\
&= \rho_{\mathcal G}(e_{(x_1,0)\otimes\cdots\otimes (x_l,0),(y_1,0)\otimes\cdots\otimes (y_m,0)} \otimes 1_{\mathcal K^{\otimes k}}) \\
&= g_{k + l}S_{(x_1,0)\otimes\cdots\otimes (x_l,0)}S_{(y_1,0)\otimes\cdots\otimes(y_m,0)}^*g_{k + m},
\end{align*}
and we see that the result is in $C^*(\pi,\tau)$ where $(\pi,\tau)$ is as in Proposition \ref{sumuniversecuntz}.
Hence, the image of $\rho_{\mathcal G} \circ\gamma$ is contained in $C^*(\pi,\tau)$ and using $\theta:C^*(\pi,\tau) \to \mathcal T(\mathcal H)$ defined by Proposition \ref{sumuniversecuntz}, we may define 
\[ \bar\rho_{\mathcal G} = \theta \circ \rho_{\mathcal G} \circ \gamma:D_p{\mathcal H} \to \mathcal T(\mathcal H). \]
We further compute
\begin{align*}
\bar\rho_{\mathcal G}(\gamma(e_{x,y} \otimes 1_{\mathcal H^{\otimes k}}))
&= \theta(g_{k + l}S_{(x_1,0)\otimes\cdots\otimes (x_l,0)}S_{(y_1,0)\otimes\cdots\otimes(y_m,0)}^*g_{k + m}) \\
&= g_{k + l}T_{x_1\otimes \cdots\otimes x_l}T_{y_1\otimes\cdots\otimes y_m}^*g_{k + m},
\end{align*}
as required.
\end{proof}

Let $A$ be a $C^*$-algebra, let $\mathcal H$ be a countably generated correspondence over $A$.
Consider a tuple $\mathcal G=(g_0,\dots,g_{p-1})$ of positive contractions from $A_\infty$.
We may lift this to a sequence $(\mathcal G_i)$ of tuples of positive contractions from $A$.
Using this lift and Corollary \ref{spock_cor}, we define
\begin{equation}
\label{barrhoDef}
\bar\rho_{\mathcal G} = (\rho_{\mathcal G_i}):D_p(\mathcal H) \to \mathcal O(\mathcal H)_\infty.
\end{equation}
Note that this is independent of the choice of the lift $(\mathcal G_i)$.

\begin{cor}\label{lulu}
Let $A$ be a separable $C^*$-algebra, let $\mathcal H$ be a countably generated correspondence over $A$, and let $p \in \mathbb N$.
If $\mathcal{G} = (g_0,\ldots,g_{p-1})$ is a tuple of orthogonal positive contractions in $A_\infty$ satisfying $z \ldotp g_k = g_{k + \lvert z\rvert}\ldotp z$ for all $k$ and all elementary tensors $z\in \mathcal F_p(\mathcal H)$
then the c.p.\ map $\bar{\rho}_{\scriptscriptstyle{\mathcal{G}}}$ from \eqref{barrhoDef} is contractive and order zero.
\end{cor}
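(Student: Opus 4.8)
The plan is to work with an explicit lift of $\mathcal{G}$ and to exploit the factorization of $\bar\rho_{\mathcal{G}}$ through compression maps that underlies \eqref{barrhoDef}. Being a sequence of compositions of c.p.\ maps (inclusions, compressions, the quotient $Q$, and the isomorphisms $\gamma$, $\theta$), $\bar\rho_{\mathcal{G}}$ is automatically c.p., so only contractivity and the order zero property remain. Fix a lift $(\mathcal{G}_i)$, $\mathcal{G}_i = (g_0^{(i)},\dots,g_{p-1}^{(i)})$, of $\mathcal{G}$ (by the remark following \eqref{barrhoDef} the resulting map is independent of this choice). Two numerical inputs will be used throughout: since $g_jg_k = 0$ in $A_\infty$ for $j\ne k$, we have $\delta_i := \max_{j\ne k}\lVert g_j^{(i)}g_k^{(i)}\rVert \to 0$; and since $z\ldotp g_k = g_{k+\lvert z\rvert}\ldotp z$ holds in $\mathcal{H}_\infty$ for every elementary tensor $z\in\mathcal{F}_p(\mathcal{H})$ — hence also in $\mathcal{K}_\infty$ for the corresponding $\mathcal{H}$-elementary tensor in $\mathcal{K}$, because the embedding $\mathcal{H}^{\otimes\bullet}\hookrightarrow\mathcal{K}^{\otimes\bullet}$ is a morphism of correspondences — the relevant covariance defects $\lVert z'\ldotp g_l^{(i)} - g_{l+\lvert z\rvert}^{(i)}\ldotp z'\rVert$ tend to $0$ for each such $z$ (here $z'$ denotes the image of $z$).

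By Corollary \ref{spock_cor}, Lemma \ref{maps} and Corollary \ref{techcortwo}, each $\bar\rho_{\mathcal{G}_i}$ (post-composed with $\mathcal{T}(\mathcal{H})\to\mathcal{O}(\mathcal{H})$) factors as $\psi\circ\Phi_i\circ\phi$, where $\phi$ is the fixed injective $*$-homomorphism obtained by following $\gamma$ of Lemma \ref{embed} with the inclusion of Corollary \ref{techcortwo}, $\psi$ is the fixed $*$-homomorphism obtained by following $Q$ with $\theta$ of Proposition \ref{sumuniversecuntz} and then $\mathcal{T}(\mathcal{H})\to\mathcal{O}(\mathcal{H})$, and $\Phi_i$ is the compression $T\mapsto R_{\mathcal{G}_i}TR_{\mathcal{G}_i}^*$ with $R_{\mathcal{G}_i}$ as in \eqref{Rdef}. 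Since $*$-homomorphisms are contractive and preserve orthogonality of positive elements, it suffices to control the compressions $\Phi_i$ on $\phi(D_p(\mathcal{H}))$. By Lemma \ref{moremaps}(1) and the obvious estimate $\lVert g_0^2|_{\mathcal{K}^{\otimes 0}} + \cdots + g_{p-1}^2|_{\mathcal{K}^{\otimes p-1}}\rVert \le 1$, we get $\lVert R_{\mathcal{G}_i}\rVert^2 = \lVert R_{\mathcal{G}_i}^*R_{\mathcal{G}_i}\rVert \le 1 + p^2\delta_i$, hence $\lVert \Phi_i\rVert \le 1 + p^2\delta_i \to 1$, which gives that $\bar\rho_{\mathcal{G}}$ is contractive.

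For the order zero property, let $d_1,d_2\in D_p(\mathcal{H})_+$ with $d_1d_2 = 0$, and put $c_j = \phi(d_j)$, so $c_1,c_2\ge 0$ and $c_1c_2 = 0$. Writing $B_i = R_{\mathcal{G}_i}^*R_{\mathcal{G}_i}$ and using $c_1B_ic_2 = c_1[B_i,c_2]$ (because $c_1c_2 = 0$), and that $\psi$ is contractive,
\[ \lVert \bar\rho_{\mathcal{G}_i}(d_1)\bar\rho_{\mathcal{G}_i}(d_2)\rVert \le \lVert \Phi_i(c_1)\Phi_i(c_2)\rVert = \lVert R_{\mathcal{G}_i}\,c_1[B_i,c_2]\,R_{\mathcal{G}_i}^*\rVert \le (1+p^2\delta_i)\,\lVert d_1\rVert\,\lVert [B_i,c_2]\rVert, \]
so it remains to show $\lVert [B_i,c_2]\rVert\to 0$. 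Since $D_p(\mathcal{H})$ is the closed linear span of the generators $e_{x,y}\otimes 1_{\mathcal{H}^{\otimes k}}$, and $\phi$ carries such a generator to an element of the form $\iota(e_{x',y'}\otimes 1_{\mathcal{K}^{\otimes k}})$ with $x',y'$ the corresponding $\mathcal{H}$-elementary tensors in $\mathcal{K}$, Lemma \ref{moremaps}(2) applies to each such image — with an error parameter that absorbs both the overlaps $\lVert g_j^{(i)}g_k^{(i)}\rVert$ and the finitely many covariance defects relevant to a given finite approximant of $d_2$, all of which tend to $0$ by the first paragraph. Approximating $d_2$ by finite linear combinations of generators and combining with $\lVert B_i\rVert\le 1+p^2\delta_i$ then yields $\limsup_i\lVert [B_i,c_2]\rVert = 0$, whence $\limsup_i\lVert \bar\rho_{\mathcal{G}_i}(d_1)\bar\rho_{\mathcal{G}_i}(d_2)\rVert = 0$, i.e.\ $\bar\rho_{\mathcal{G}}(d_1)\bar\rho_{\mathcal{G}}(d_2) = 0$ in $\mathcal{O}(\mathcal{H})_\infty$, as required.

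The arguments here are not deep; the main effort is organizational. One must correctly assemble the factorization of $\bar\rho_{\mathcal{G}}$ through the compression picture over the free module $\mathcal{K}$ supplied by the earlier lemmas, verify that the single hypothesis of Corollary \ref{lulu} delivers precisely the approximate orthogonality and approximate covariance that Lemma \ref{moremaps} requires (after transporting them to the $\mathcal{H}$-elementary tensors of $\mathcal{K}$), and then push the resulting error bounds through the limit in the sequence algebra via a routine density argument — taking a little care that a single error parameter majorizes both the overlaps and the finitely many covariance defects in play.
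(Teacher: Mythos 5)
Your proof is correct and takes essentially the same route as the paper: you factor $\bar\rho_{\mathcal G_i}$ through the compression by $R_{\mathcal G_i}$, use Lemma \ref{moremaps}(1) to get $\|R_{\mathcal G_i}^*R_{\mathcal G_i}\|\le 1+p^2\delta_i\to 1$ for contractivity, and use Lemma \ref{moremaps}(2) (with a parameter absorbing both overlaps and covariance defects) plus density to get asymptotic commutation of $R_{\mathcal G_i}^*R_{\mathcal G_i}$ with the image of $D_p(\mathcal H)$, whence order zero via $c_1B_ic_2=c_1[B_i,c_2]$ --- exactly the argument the paper gives, only with the last step made explicit. Since all your estimates are established at the level of the compressions, before applying the $*$-homomorphisms, they also prove the version with codomain $\mathcal T(\mathcal H)_\infty$ (which is what \eqref{barrhoDef} is intended to define and what is used in Theorem \ref{main_thm}).
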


\begin{proof}
Analogously to \eqref{barrhoDef}, define $\rho_{\mathcal G} = (\rho_{\mathcal G_i}):D_p(\mathcal K) \to \mathcal O(\mathcal K)_\infty$ and $\sigma_{\mathcal G} = (\sigma_{\mathcal G_i}):D_p(\mathcal K) \to \mathbb B(\mathcal F(\mathcal K))_\infty$.
We have
\begin{align*}
\bar\rho_{\mathcal G} &= \theta_\infty \circ \rho_{\mathcal G} \circ \gamma \text{ and} \\
\rho_{\mathcal G} &= Q_\infty \circ \sigma_{\mathcal G}.
\end{align*}
By Lemma \ref{moremaps} (1) and the hypothesis,
\begin{equation*}
\lVert [R_{\mathcal G_i}^*R_{\mathcal G_i}, e_{w,z}\otimes 1_{\mathcal{K}^{\otimes k}}]\rVert \to 0
\end{equation*}
for all $\mathcal H$-elementary tensors $w,z\in \mathcal F(\mathcal{K})$. By the definition of $\gamma$ (from Lemma \ref{embed}), this implies that
\[ \lVert [R_{\mathcal G_i}^*R_{\mathcal G_i}, \gamma(x)] \rVert \to 0 \]
for all $x \in D_p(\mathcal H)$.
By the formula for $\sigma_{\mathcal G_i}$ (from Lemma \ref{maps}), it follows that $\sigma_{\mathcal G}$ is order zero.
Consequently, $\rho_{\mathcal G} \circ \gamma$, and therefore also $\bar\rho_{\mathcal G}$, is order zero.

To see that $\bar\rho_{\mathcal G}$ is contractive, we have by Lemma \ref{moremaps} (2) that
\[ \|R_{\mathcal G_i}^*R_{\mathcal G_i} - \sum_k g_{k,i}^2|_{\mathcal K^{\otimes k}}\| \to 0, \]
where $\mathcal G_i = (g_{0,i},\dots,g_{p-1,i})$.
Since the $g_i$ are orthogonal and contractive, $\sum_i g_i^2$ is contractive.
It follows from the formula for $\sigma_{\mathcal G_i}$ that $\sigma_{\mathcal G}$ is contractive.
Hence, so is $\bar\rho_{\mathcal G}$.
\end{proof}

\subsection{Proof of the main theorem}\label{Proof}
We have one technical lemma before the main result.
For $p \in \mathbb N$ and $k =0,\dots,p-1$, set
\begin{equation}
\label{d_kDef}
 d_p(k) = 1 - \frac{\lvert p-1-2k\rvert}{p-1} \in [0,1]
\end{equation}
\\
\begin{center}
\includegraphics[scale=.4]{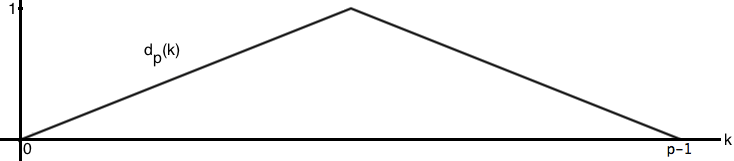}.
\end{center}

\begin{lem}\label{bumplemma}
Let $p$ be an odd integer.
If $f_0,\ldots f_{p-1}$ are positive contractions in a $C^*$-algebra $A$, then for any $0 \le N\le p-1$,
\begin{equation}\label{bump}
\sum_{k = N}^{p-1}d_p(k).\left(f_k + f_{\frac{p-1}{2} + k\ (\mathrm{mod}\ p)}\right) \ \approx_{\frac{4N^2}{p-1}} \sum_{k=0}^{p-1}f_k. 
\end{equation}
\end{lem}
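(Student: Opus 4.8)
The plan is to reduce \eqref{bump} to a scalar estimate on coefficients. Set $s=(p-1)/2$, an integer since $p$ is odd, and let $\sigma$ be the shift $k\mapsto k+s$ on $\mathbb Z/p$. Expanding the double sum and collecting, for each $j\in\{0,\dots,p-1\}$, the total coefficient of $f_j$, one finds that the left-hand side of \eqref{bump} equals $\sum_{j=0}^{p-1}c_jf_j$, where
\[ c_j=d_p(j)\,\mathbf 1_{\{j\ge N\}}+d_p(\sigma^{-1}(j))\,\mathbf 1_{\{\sigma^{-1}(j)\ge N\}}; \]
the first term is contributed by the ``$f_k$'' part of the sum and the second by the ``$f_{\sigma(k)}$'' part, with $\sigma^{-1}(j)=j-s\bmod p$. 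Since the right-hand side is $\sum_jf_j$, we have $\mathrm{LHS}-\mathrm{RHS}=\sum_j(c_j-1)f_j$; the $f_j$ being positive contractions, this has norm at most $\sum_j|c_j-1|$, and when they are mutually orthogonal --- the case relevant to Rokhlin partitions --- at most $\max_j|c_j-1|$. So the task is to bound how far the $c_j$ are from $1$.

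The key input is a tent-function identity. From the piecewise description $d_p(m)=m/s$ for $0\le m\le s$ and $d_p(m)=(2s-m)/s$ for $s\le m\le 2s$, a direct computation gives $d_p(j)+d_p(j-s)=1$ for every $j\in\{s,\dots,p-1\}$ (there is no wraparound here, so $j-s\in\{0,\dots,s\}$), whereas for $j\in\{0,\dots,s-1\}$ one has $\sigma^{-1}(j)=j+s+1$ and $d_p(j)+d_p(j+s+1)=1-\tfrac1s$. In other words the two shifted copies of the tent overlap to the constant $1$ on the ``upper half'' of indices and to $1-\tfrac2{p-1}$ on the ``lower half''; the $\tfrac2{p-1}$ discrepancy is exactly what $p$ odd forces. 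Granting this, the rest is bookkeeping: at $N=0$ all indicators are present, so $|c_j-1|\le\tfrac2{p-1}$ for every $j$; and for $N\ge1$, truncating the sum can only \emph{remove} a weight $d_p(j)$ or $d_p(\sigma^{-1}(j))$ from $c_j$, and only when the offending index lies in $\{0,\dots,N-1\}$, where $d_p$ is at most $\tfrac{N-1}{s}$. Equivalently,
\[ \sum_{k=0}^{p-1}d_p(k)(f_k+f_{\sigma(k)})-\sum_{k=N}^{p-1}d_p(k)(f_k+f_{\sigma(k)})=\sum_{k=0}^{N-1}d_p(k)(f_k+f_{\sigma(k)}), \]
which (for $N\le s$) has norm at most $2\sum_{k=0}^{N-1}\tfrac ks=\tfrac{2N(N-1)}{p-1}$; adding the $N=0$ contribution gives $\|\mathrm{LHS}-\mathrm{RHS}\|\le\tfrac{2N(N-1)}{p-1}+\tfrac2{p-1}\le\tfrac{4N^2}{p-1}$ for $N\ge1$, and for $N>s$ the right-hand bound is large enough that \eqref{bump} holds by cruder estimates.

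I expect the only real difficulty to be the odd-$p$ tent identity: one has to track the mod-$p$ wraparound carefully, since it carries a lower-half index $j$ not to the ``antipode'' $j+s$ but to $j+s+1$, and it is this off-by-one that produces the $\tfrac2{p-1}$ correction --- which one then checks is dominated by $\tfrac{4N^2}{p-1}$ once $N\ge1$. Everything else --- summing the finite arithmetic series $\sum_{k<N}d_p(k)$ and the closing numerical comparison --- is routine.
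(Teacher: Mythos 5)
Your strategy is the same as the paper's: split off the initial segment $\sum_{k<N}d_p(k)\bigl(f_k+f_{\frac{p-1}{2}+k}\bigr)$, bound it by $2\sum_{k<N}d_p(k)\le\frac{4N^2}{p-1}$, and compare the full ($N=0$) sum with $\sum_k f_k$ via a ``two shifted tents'' identity. The difference lies in that identity: the paper's proof asserts $d_p(k)+d_p(\tfrac{p-1}{2}+k\ (\mathrm{mod}\ p))=1$ for \emph{all} $k=0,\dots,p-1$, hence exact equality of the full sum with $\sum_k f_k$, whereas your wraparound computation gives $1$ only on the upper half of indices and $1-\tfrac{2}{p-1}$ on the lower half. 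Your computation is the correct one: for $p=5$ the full sum is $\tfrac12 f_0+\tfrac12 f_1+f_2+f_3+f_4$, not $\sum_k f_k$. Consequently the lemma as literally stated is slightly too strong: it fails at $N=0$, and for non-orthogonal contractions it can fail even for $N\ge 1$ (take $f_k=1$ for all $k$, $N=1$, $p=7$: the discrepancy is $1>\tfrac{4}{p-1}=\tfrac23$).

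Your repair --- using the estimate $\lVert\sum_j(c_j-1)f_j\rVert\le\max_j\lvert c_j-1\rvert$, valid when the $f_j$ are pairwise orthogonal, and restricting to $N\ge1$ so that the extra $\tfrac{2}{p-1}$ is absorbed by $\tfrac{4N^2}{p-1}$ --- is correct and is exactly what the application in Theorem \ref{main_thm} requires: there the lemma is applied to one colour of a Rokhlin tower, whose elements are orthogonal, and the extra $O(1/p)$ term would in any case disappear by enlarging $p$. Do note, however, that orthogonality (or a substitute such as $\lVert\sum_k f_k\rVert\le 1$) is an assumption your argument uses that the statement does not grant; without it, the $N=0$ defect is $\tfrac{2}{p-1}\lVert\sum_{j<(p-1)/2}f_j\rVert$, which can be of order $1$. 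So either add orthogonality to the hypotheses (harmless for the paper) or enlarge the error term accordingly. With that caveat, your bookkeeping --- the arithmetic-series bound $2\sum_{k<N}d_p(k)\le\tfrac{2N(N-1)}{p-1}$, the comparison $\tfrac{2N(N-1)}{p-1}+\tfrac{2}{p-1}\le\tfrac{4N^2}{p-1}$ for $N\ge1$, and the crude estimate for $N>\tfrac{p-1}{2}$ --- checks out, and your treatment of the mod-$p$ off-by-one is more accurate than the paper's own proof.
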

\begin{proof}
It's easy to verify that 
\begin{equation*}
d_p(k) + d_p(\tfrac{p-1}{2} + k\ (\mathrm{mod}\ p)) = 1 \ \text{for}\ k =0,\dots,p-1
\end{equation*}
so that
\begin{equation*}
\sum_{k=0}^{p-1} d_p(k)\left(f_k + f_{\frac{p-1}{2} + k (\mathrm{mod}\ p)}\right) 
= \sum_{k=0}^{p-1}f_k.
\end{equation*}
Moreover, 
\begin{equation*}
\lVert\sum_{k=0}^{N-1}d_p(k)\left(f_k + f_{\frac{p-1}{2} + k}\right)\rVert \le 2\sum_{k=0}^{N-1}d_p(k) \le 2N\left(1 - \frac{p-1-2N}{p-1}\right) = \frac{4N^2}{p-1},
\end{equation*}
so the result follows. 
\end{proof}

We now have the ingredients to prove the main result. 

\begin{thm}\label{main_thm}
Suppose that $\mathcal{H}$ is a countably generated $C^*$-correspondence over a separable unital $C^*$-algebra $A$. Assume further that for every $p\in \mathbb{N}$, there is an approximate unit consisting of projections in $\mathbb{K}(\mathcal{F}_p(\mathcal{H}))$ that are quasicentral in $D_p(\mathcal{H})$. Then 
\begin{equation*}
(\dim_{\mathrm{nuc}}(\mathcal{T}(\mathcal{H})) + 1) \le 2(\dim_{\mathrm{nuc}}(A)+1)(\dim_{\mathrm{Rok}}(\mathcal{H})+1).
\end{equation*}
\end{thm}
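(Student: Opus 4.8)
The plan is to apply Lemma \ref{nucdimtool} with $m = 2(\dim_{\mathrm{Rok}}(\mathcal H)+1)$ and $n = \dim_{\mathrm{nuc}}(A)$, so the factorizing algebra $B$ will be $D_p(\mathcal H)$ for a suitably large odd $p$, and the outgoing map $\psi : \mathcal T(\mathcal H) \to D_p(\mathcal H)$ will be a compression of the Fock space representation to $\mathcal F_p(\mathcal H)$. More precisely, I would fix a finite set $F \subset \mathcal T(\mathcal H)$ consisting of words $T_xT_y^*$ with $x,y$ elementary tensors of controlled length, fix $\epsilon > 0$, and choose $p$ large compared to the lengths appearing in $F$ and to $1/\epsilon$. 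Let $P_p \in \mathbb B(\mathcal F(\mathcal H))$ be the projection onto $\mathcal F_p(\mathcal H)$; the outgoing map is $\psi(a) = P_p a P_p$, which lands in $D_p(\mathcal H)$ (this is where the definition of $D_p(\mathcal H)$ as the span of the $e_{x,y}\otimes 1_{\mathcal H^{\otimes k}}$ with $\max\{|x|,|y|\}+k < p$ is used: compressing $T_xT_y^*$, expanded via \eqref{Strict-Expansion}, to $\mathcal F_p(\mathcal H)$ gives exactly such a sum). The map $\psi$ is c.p.c., and for $T_xT_y^* \in F$ one checks $\psi(T_xT_y^*) = \sum_{k : |x|+k, |y|+k < p} e_{x,y}\otimes 1_{\mathcal H^{\otimes k}}$, which is ``most of'' $T_xT_y^*$.

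The incoming map $\phi : D_p(\mathcal H) \to \mathcal T(\mathcal H)_\infty$ is built from the Rokhlin contractions. Given $\dim_{\mathrm{Rok}}(\mathcal H) \le d$, for the chosen $p$ we get positive contractions $\{f_k^l\}_{l=0,\dots,d;\, k \in \mathbb Z/p} \subset A_\infty \cap A'$ satisfying conditions (1)--(3) of Definition \ref{Rokhlin-Dimension-Def}, hence also $z\ldotp f_k^l = f^l_{k+|z|}\ldotp z$ for elementary tensors $z \in \mathcal F(\mathcal H)$ as in Remark \ref{Rokhlin-Def-Rem}(iii). Using the ``bump'' weights $d_p(k)$ from \eqref{d_kDef}, set $g_k^l = d_p(k)^{1/2} f_k^l$ (or a variant spreading the tower into two halves as in Lemma \ref{bumplemma}), and for each colour $l$ and each of the two half-shifts apply Corollary \ref{lulu} to the tuple $\mathcal G^{l,\pm} = (g^{l,\pm}_0,\dots,g^{l,\pm}_{p-1})$ of orthogonal positive contractions satisfying the balance condition $z\ldotp g^{l,\pm}_k = g^{l,\pm}_{k+|z|}\ldotp z$; this yields a c.p.c.\ order zero map $\bar\rho_{\mathcal G^{l,\pm}} : D_p(\mathcal H) \to \mathcal T(\mathcal H)_\infty$ with $\bar\rho_{\mathcal G^{l,\pm}}(e_{x,y}\otimes 1_{\mathcal H^{\otimes k}}) = g^{l,\pm}_{k+|x|} T_x T_y^* g^{l,\pm}_{k+|y|}$. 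Summing over the $d+1$ colours and the two halves gives $\phi = \sum_{l,\pm} \bar\rho_{\mathcal G^{l,\pm}}$, a sum of $2(d+1)$ c.p.c.\ order zero maps into $\mathcal T(\mathcal H)_\infty$. Here I need $\mathcal O(\mathcal H)$ versus $\mathcal T(\mathcal H)$: Corollary \ref{spock_cor} actually produces maps into $\mathcal T(\mathcal H)$ directly, so I would use that form of the incoming map, with the diagram of Corollary \ref{spock_cor} keeping track of the passage through $\mathcal O(\mathcal K)$.

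The composite $\psi\phi : \mathcal T(\mathcal H) \to \mathcal T(\mathcal H)_\infty$ is then computed on $T_xT_y^* \in F$: since the $f_k^l$ commute with $A$ and satisfy the shift relation, $\bar\rho_{\mathcal G^{l,\pm}}(\psi(T_xT_y^*)) = \sum_k g^{l,\pm}_{k+|x|} T_xT_y^* g^{l,\pm}_{k+|y|}$, and using $x\ldotp g^{l,\pm}_{k} = g^{l,\pm}_{k+|x|}\ldotp x$ this rewrites as $T_x T_y^* \cdot (g^{l,\pm}_{k+|y|})^2$ summed over the relevant $k$ (absorbing one $g$ through the word, with $|x|=|y|$ forced for $T_xT_y^*$ to contribute after compression, or handling the general case by noting the relevant terms match up in index). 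Summing over $l, \pm$ and $k$, the weights $d_p(k)$ were chosen via Lemma \ref{bumplemma} precisely so that $\sum_{l,k,\pm} (g^{l,\pm}_{k+|y|})^2 \approx \sum_{k,l} f_k^l = 1$ up to an error of order $4N^2/(p-1)$ where $N$ bounds the lengths in $F$, plus the truncation error from $\psi$ dropping the finitely many tail terms; both are $< \epsilon$ once $p$ is large enough. Hence $\|\psi\phi(a) - a\| < \epsilon$ for $a \in F$. Applying Lemma \ref{nucdimtool} with $B = D_p(\mathcal H)$, $\dim_{\mathrm{nuc}}(B) \le \dim_{\mathrm{nuc}}(A)$ by Lemma \ref{quasi_hypothesis} (using the quasidiagonal-like hypothesis), $m = 2(d+1)$, $n = \dim_{\mathrm{nuc}}(A)$ gives $\dim_{\mathrm{nuc}}(\mathcal T(\mathcal H)) \le 2(d+1)(\dim_{\mathrm{nuc}}(A)+1) - 1$, which rearranges to the claimed inequality.

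The main obstacle I anticipate is the bookkeeping in the final estimate $\|\psi\phi(a) - a\| < \epsilon$: one must simultaneously control (i) the truncation error from compressing to $\mathcal F_p(\mathcal H)$, which kills only finitely many terms of each $T_xT_y^*$ and is negligible for $p$ large, (ii) the error of order $N^2/p$ from replacing the sum of bumped towers by the genuine partition of unity $\sum f_k^l = 1$ via Lemma \ref{bumplemma}, which is why $p$ must grow faster than $N^2$, and (iii) the errors from the approximate commutation and shift relations of the $f_k^l$ (in $A_\infty$ these are exact, which is the reason for working in the sequence algebra and only at the end invoking Lemma \ref{nucdimtool}, which already accounts for the $A_\infty$ reindexing). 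Making sure the two half-shifts $k \mapsto k$ and $k \mapsto \frac{p-1}{2}+k \pmod p$ combine to give the full partition of unity — rather than double-counting or leaving gaps — and that the resulting maps remain c.p.c.\ order zero (which is exactly the content of Corollary \ref{lulu}, provided the bumped tuples are still orthogonal and satisfy the balance relation) is the delicate point; everything else is a direct assembly of the lemmas already proved.
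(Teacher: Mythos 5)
Your overall skeleton (Lemma \ref{nucdimtool} with $B=D_p(\mathcal H)$, a Fock-space compression as the outgoing map, incoming maps $\bar\rho_{\mathcal G}$ from Rokhlin towers, and the bump function $d_p$ with two half-shifted towers to recover the unit) is the paper's, but there is a genuine gap in where you place the weights $d_p(k)^{1/2}$. You fold them into the incoming data, setting $g_k^l=d_p(k)^{1/2}f_k^l$ (or its square-root variant), and then invoke Corollary \ref{lulu}, asserting that these bumped tuples still satisfy the balance condition $z\ldotp g_k^{l}=g^{l}_{k+|z|}\ldotp z$. They do not: since the scalars $d_p(k)$ depend on $k$, one has $z\ldotp g_k^l=d_p(k)^{1/2}f^l_{k+|z|}\ldotp z$ whereas $g^l_{k+|z|}\ldotp z=d_p(k+|z|)^{1/2}f^l_{k+|z|}\ldotp z$, and $|d_p(k)-d_p(k+|z|)|$ can be as large as $1$ for elementary tensors $z\in\mathcal F_p(\mathcal H)$ of length comparable to $p$. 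So the hypothesis of Corollary \ref{lulu} fails exactly, not just approximately, and the failure is not repairable by the error estimates of Lemma \ref{moremaps}: with weighted rows, $R_{\mathcal G}^*R_{\mathcal G}\approx\sum_k d_p(k)\,(f_k^l)|_{\mathcal K^{\otimes k}}$, and its commutator with $e_{w,z}\otimes 1_{\mathcal K^{\otimes j}}$ has norm of order $|d_p(j+|w|)-d_p(j+|z|)|$, which does not tend to $0$. Consequently your incoming maps are genuinely not order zero on $D_p(\mathcal H)$ (already for $A=\mathcal H=\mathbb C$, $D_p\cong M_p(\mathbb C)$, the map $e_{i,j}\mapsto c_iT^i(T^*)^jc_j$ with unequal scalar weights $c_i$ fails to preserve orthogonality of rank-one projections mixing degrees), and Lemma \ref{nucdimtool} requires honest c.p.c.\ order zero summands, so the final step does not go through as written.

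The fix, which is what the paper does, is to put the weights into the \emph{outgoing} map, where only complete positivity and contractivity are needed: take $\phi$ to be compression by $\sqrt\Delta P$ with $\Delta=\mathrm{diag}(d_p(0),\dots,d_p(p-1))$, so $\phi(T_xT_y^*)=\sum_k\sqrt{d_p(k+|x|)d_p(k+|y|)}\,e_{x,y}\otimes1_{\mathcal H^{\otimes k}}$, and keep the incoming maps built from the unweighted tuples $((f_k^l)^{1/2})_k$ and their half-shifts, which satisfy the exact relation \eqref{Rokhlinsqrt} in $A_\infty$ so that Corollary \ref{lulu} applies verbatim. Note also that you cannot instead drop the weights altogether and use the plain compression $P_p\cdot P_p$: the composite would then be $\bigl(\sum_{l}\sum_{k\ge|x|}f_k^l\bigr)T_xT_y^*$, and the missing mass $\sum_{l}\sum_{k<|x|}f_k^l$ need not be small, which is precisely what the $d_p$-weighting combined with Lemma \ref{bumplemma} is designed to cure. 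Your error bookkeeping (truncation, the $4N^2/(p-1)$ term, exactness in $A_\infty$) is otherwise in line with the paper, but the order-zero issue above is essential and must be resolved by relocating the weights.
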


\begin{rem}
Example \ref{QD-examples} gives two important examples of classes of correspondences which satisfy the technical hypothesis of this theorem, namely, that for every $p$, there is an approximate unit of projections in $\mathbb K(\mathcal F_p(\mathcal H))$ which is quasicentral in $D_p(\mathcal H)$.
\end{rem}

\begin{proof}
If either $\dim_{\mathrm{nuc}}(A)$ or $\dim_{\mathrm{Rok}}(\mathcal{H},A) = \infty$ there is nothing to show. Otherwise, let $\dim_{\mathrm{nuc}}(A) = n$ and $\dim_{\mathrm{Rok}}(\mathcal{H}) = d$. We will use Lemma \ref{nucdimtool} with $m=d+1$.
Therefore let $F$ be a finite subset of $\mathcal{T}(\mathcal{H})$ and fix $\epsilon > 0$.

Noting \eqref{Tspan}, we may assume without loss of generality that $F$ consists of elements of the form $T_xT_y^*$ where $x,y$ are elementary tensors.
Pick $N$ such that every element of $F$ is $T_xT_y^*$ for some elementary tensors $x,y \in \mathcal F_{N+1}(\mathcal H)$.
Let $p \in \mathbb{N}$ be large enough so that 
\begin{equation}\label{pestimate}
(d+1)\left(2\sqrt{\frac{2N}{p-1}} + \frac{4N^2}{p-1}\right) < \epsilon
\end{equation}
and let $P\in \mathbb{B}(\mathcal{F}(\mathcal{H}))$ be the projection onto the $p^{\text{th}}$-cutoff Fock space $\mathcal{F}_p(\mathcal{H})$. 

For $k=0,\dots,p-1$, use $d_p(k)$ as defined in \eqref{d_kDef}; for convenience, we set $d_p(k)=0$ for $k \geq p$.
Using these, set $\Delta = \text{diag}(d_p(0),\ldots,d_p(p-1))$ and let $\phi: \mathcal{T}(\mathcal{H}) \rightarrow  \mathbb B(\mathcal F_p(\mathcal H))$ be compression by $\sqrt{\Delta} P$; that is, for elementary tensors $\zeta,x,y$,
\[
\phi(T_xT_y^*)(\zeta) =
\begin{cases}
d_p(|\zeta|)d_p(|\zeta|+|x|-|y|) x \otimes \langle y,\zeta_1\rangle \ldotp \zeta_2, \quad &\zeta=\zeta_1\otimes \zeta_2, |\zeta_1|=|y|; \\ 0, \quad &|\zeta|<|y| \end{cases}
\]
(here it is convenient that $d_p(k)=0$ for $k \geq p$).
It is not hard to see that $\phi(\mathcal T(\mathcal H))$ is contained in $D_p(\mathcal H)$.

Find Rokhlin contractions $\{f_k^l\}_{l = 0,\ldots,d;\ k \in \mathbb Z/p}$ in $A_\infty$ satisfying 
\begin{enumerate}
\item $f_k^lf_{k'}^l=0$ for all $l$ and $k\ne k'$,
\item $\sum_{k,l}f_k^l =1$,
\item $z\ldotp f_k^l = f_{k + \lvert z\rvert}^l\ldotp z$ for all $k,l$, and all elementary tensors $z \in \mathcal F(\mathcal H)$. 
\end{enumerate}
Note that (3) implies that $z\ldotp (f_k^l)^m = (f_{k + \lvert z\rvert}^l)^m\ldotp z$ for all $m \in \mathbb N$, which in turn implies that $z\ldotp g(f_k^l) = g(f_{k + \lvert z\rvert}^l)\ldotp z$ for any continuous function $g \in C([0,1])$.
In particular,
\begin{equation}
\label{Rokhlinsqrt}
z\ldotp (f_k^l)^{1/2} = (f_{k + \lvert z\rvert}^l)^{1/2}\ldotp z
\end{equation}
For $l=0,\dots,d$, set 
\begin{equation*}
\mathcal{G}^l = ((f_0^l)^{1/2},\ldots,(f_{p-1}^l)^{1/2}) \ \ \ \text{and} \ \ \ \hat{\mathcal{G}}^l = ((f_{\frac{p-1}{2} + 0}^l)^{1/2},\ldots,(f_{\frac{p-1}{2} + p-1}^l)^{1/2}),
\end{equation*}
two tuples of orthogonal positive elements in $A_\infty$.
Define $\bar{\rho}_{\scriptscriptstyle{\mathcal{G}^l}}$ and $\bar{\rho}_{\scriptscriptstyle{\hat{\mathcal{G}}^l}}$ as in Corollary \ref{spock_cor}; using \eqref{Rokhlinsqrt}, Corollary \ref{lulu} tells us that these maps are c.p.c.\ and order zero.

To simplify notation, write $\rho^l$ and $\hat{\rho}^l$ for $\bar{\rho}_{\scriptscriptstyle{\mathcal{G}^l}}$ and $\bar{\rho}_{\scriptscriptstyle{\hat{\mathcal{G}}^l}}$, respectively.
Thus we have the following diagram.

\begin{center}
\begin{tikzpicture}
\matrix (m) [matrix of math nodes, row sep=2em,column sep=8em, text height=1.5ex, text depth=0.25ex]
{
\mathcal T(\mathcal H) & & \mathcal T(\mathcal H)_\infty \\
& D_p(\mathcal H) & \\
};
\path[->,font=\small]
(m-1-1) edgenode[auto] {} (m-1-3)
(m-1-1) edgenode[auto,swap] {$\phi$} (m-2-2)
(m-2-2) edgenode[auto,swap] {$\sum_l (\rho_l + \hat\rho_l)$} (m-1-3);
\end{tikzpicture}
\end{center}

By Lemma \ref{quasi_hypothesis}, $D_p(\mathcal H)$ has nuclear dimension at most $n$.
We shall show that this diagram commutes, up to $\epsilon$ on $F$.
As $\phi$ is c.p.c.\ and each $\sum_l (\rho_l+\hat\rho_l)$ is overtly a sum of $2(d+1)$ c.p.c.\ order zero maps, this will finish verifying the hypotheses of Lemma \ref{nucdimtool}, and thus finish the proof.

Consider an element of $F$, necessarily of the form $T_xT_y^*$, $x,y \in \mathcal F_{N+1}(\mathcal H)$.
For the moment, assume that $\lvert x\rvert \ge \lvert y\rvert$.  Applying $\phi$ to $T_xT_y^*$,  using \eqref{Strict-Expansion}, and noting that  $\sqrt{d_p(k +\lvert x\rvert)}$ is within $\sqrt{\frac{2N}{p-1}}$ of $\sqrt{d_p(k + \lvert y\rvert)}$ for $k \in \mathbb Z/p$, we have
\begin{align}
\phi(T_xT_y^*) &= \sum_{k = 0}^{p-1- \lvert x\rvert}\sqrt{d_p(k + \lvert x\rvert)d_p(k + \lvert y\rvert)}e_{x,y}\otimes 1_{\mathcal{H}^{\otimes k}}\notag\\
\label{equationcutdown}
& \approx_{\sqrt{\frac{2N}{p-1}}} \sum_{k=0}^{p-1-\lvert x\rvert}d_p(k+\lvert x\rvert)e_{x,y}\otimes 1_{\mathcal{H}^{\otimes k}},
\end{align}
where the error estimate in the last line is a maximum (rather than a sum) because of orthogonality.

Next, applying $\rho^l$ to $\phi(T_xT_y^*)$, we get 
\begin{align}
\notag
\rho^{l}\circ\phi(T_xT_y^*) 
&\stackrel{\eqref{equationcutdown}}\approx_{\sqrt{\frac{2N}{p-1}}} \sum_{k=0}^{p-1-\lvert x\rvert} d_p(k + \lvert x\rvert) \rho^l(e_{x,y} \otimes 1_{\mathcal H^{\otimes k}}) \\
\notag
&\stackrel{\eqref{barrhoeq}}= \sum_{k=0}^{p-1-\lvert x\rvert}d_p(k + \lvert x\rvert) (f_{k + \lvert x\rvert}^l)^{1/2}T_xT_y^*(f_{k + \lvert y\rvert}^l)^{1/2} \\
\notag
&= \sum_{k=\lvert x\rvert}^{p-1}d_p(k) (f_{k}^l)^{1/2}T_xT_y^*(f_{k - \lvert x\rvert+ \lvert y\rvert}^l)^{1/2} \\
\notag
&= \sum_{k=\lvert x\rvert}^{p-1}d_p(k) (f_{k}^l)^{1/2}T_x(f_{k - \lvert x\rvert}^l)^{1/2}T_y^* \\
\label{rhol-eq1}
&=  (\sum_{k=\lvert x\rvert}^{p-1}d_p(k)f_k^l) T_xT_y^*,
\end{align}
and likewise,
\begin{equation}
\label{rhol-eq2}
\hat{\rho}^l\circ\phi(T_xT_y^*) \approx_{\sqrt{\frac{2N}{p-1}}} (\sum_{k=\lvert x\rvert}^{p-1}d_p(k)f_{\frac{p-1}{2} + k}^l) T_xT_y^*.
\end{equation}
Summing these terms, we obtain
\begin{eqnarray*}
\sum_{l=0}^d (\rho^l + \hat{\rho}^l)\circ \phi(T_xT_y^*) \hspace*{-3em}\,
&\stackrel{\eqref{rhol-eq1},\eqref{rhol-eq2}}{_{\phantom{2(d+1)\sqrt{\frac{2N}{p-1}}}}\approx_{2(d+1)\sqrt{\frac{2N}{p-1}}}}& \sum_{l=0}^d \sum_{k=|x|}^{p-1} d_p(k) (f_k^l + f_{\frac{p-1}2+k}^l)T_xT_y^* \\
&\stackrel{\text{Lemma \ref{bumplemma}}}{_{\phantom{{(d+1)\frac{4N^2}{p-1}}}}\approx_{(d+1)\frac{4N^2}{p-1}}}& \sum_{l=0}^d \sum_{k=0}^{p-1} f_k^l T_xT_y^* \\
&=& T_xT_y^*.
\end{eqnarray*}
By \eqref{pestimate}, this yields
\[ \sum_{l=0}^d (\rho^l + \hat{\rho}^l)\circ \phi(T_xT_y^*) \approx_\epsilon T_xT_y^*, \]
as required.
A nearly identical argument shows the same estimate in the case $\lvert x\rvert < \lvert y\rvert$. 
\end{proof}

\begin{cor}\label{Main-Theorem_Cor}
Under the same hypotheses as the previous theorem,
\begin{equation*}
(\dim_{\mathrm{nuc}}(\mathcal{O}(\mathcal{H})) + 1) \le 2(\dim_{\mathrm{nuc}}(A)+1)(\dim_{\mathrm{Rok}}(\mathcal{H})+1).
\end{equation*}
\end{cor}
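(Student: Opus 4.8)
The plan is to obtain Corollary \ref{Main-Theorem_Cor} directly from Theorem \ref{main_thm}, using that $\mathcal{O}(\mathcal{H})$ is a quotient of $\mathcal{T}(\mathcal{H})$ — namely $\mathcal{O}(\mathcal{H}) \cong \mathcal{T}(\mathcal{H})/\mathbb{K}(\mathcal{F}(\mathcal{H})I_{\mathcal{H}})$ by Proposition \ref{useful_cuntz} — together with the principle that nuclear dimension does not increase when passing to a quotient. If $\dim_{\mathrm{nuc}}(A) = \infty$ or $\dim_{\mathrm{Rok}}(\mathcal{H}) = \infty$ there is nothing to prove, so one may assume both are finite; Theorem \ref{main_thm} then already yields $\dim_{\mathrm{nuc}}(\mathcal{T}(\mathcal{H})) < \infty$, so $\mathcal{T}(\mathcal{H})$ (being separable, as $A$ is separable and $\mathcal{H}$ countably generated) and hence its quotient $\mathcal{O}(\mathcal{H})$ are nuclear.

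To realize the quotient estimate concretely, I would invoke the Choi--Effros lifting theorem: since $\mathcal{O}(\mathcal{H})$ is separable and nuclear, the quotient map $q:\mathcal{T}(\mathcal{H}) \to \mathcal{O}(\mathcal{H})$ admits a c.c.p.\ splitting $s:\mathcal{O}(\mathcal{H}) \to \mathcal{T}(\mathcal{H})$ with $q\circ s = \mathrm{id}$. Given a finite set $F \subset \mathcal{O}(\mathcal{H})$ and $\epsilon > 0$, apply the definition of nuclear dimension to $s(F) \subset \mathcal{T}(\mathcal{H})$ and to $\epsilon$: since $\dim_{\mathrm{nuc}}(\mathcal{T}(\mathcal{H})) \le r := 2(\dim_{\mathrm{nuc}}(A)+1)(\dim_{\mathrm{Rok}}(\mathcal{H})+1) - 1$ by Theorem \ref{main_thm}, this produces a finite-dimensional $K = K^{(0)} \oplus \cdots \oplus K^{(r)}$ and c.p.\ maps $\mathcal{T}(\mathcal{H}) \stackrel{\varphi}{\longrightarrow} K \stackrel{\psi}{\longrightarrow} \mathcal{T}(\mathcal{H})$ with $\varphi$ c.c.p., each $\psi|_{K^{(i)}}$ contractive order zero, and $\|\psi\varphi(b) - b\| < \epsilon$ for $b \in s(F)$. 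Then
\[ \mathcal{O}(\mathcal{H}) \stackrel{\varphi\circ s}{\longrightarrow} K \stackrel{q\circ\psi}{\longrightarrow} \mathcal{O}(\mathcal{H}) \]
is the required approximation: $\varphi\circ s$ is c.c.p.; each $(q\circ\psi)|_{K^{(i)}} = q\circ(\psi|_{K^{(i)}})$ is c.p., contractive, and order zero (a $*$-homomorphism sends orthogonal positive elements to orthogonal positive elements, so post-composing an order zero map with $q$ preserves order zero); and for $a \in F$,
\[ \|q\psi\varphi s(a) - a\| = \|q\big(\psi\varphi(s(a)) - s(a)\big)\| \le \|\psi\varphi(s(a)) - s(a)\| < \epsilon. \]

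This gives $\dim_{\mathrm{nuc}}(\mathcal{O}(\mathcal{H})) \le r$, and adding $1$ yields exactly the claimed inequality (in fact one obtains the sharper $\dim_{\mathrm{nuc}}(\mathcal{O}(\mathcal{H})) \le \dim_{\mathrm{nuc}}(\mathcal{T}(\mathcal{H}))$). I do not expect a genuine obstacle here; the only point requiring care is the existence of the c.c.p.\ lift $s$, which is precisely where nuclearity of $\mathcal{O}(\mathcal{H})$ enters. One could alternatively avoid the lift and simply quote the standard permanence of nuclear dimension under quotients — this can be proved, as in the proof of \cite[Proposition 5.1]{Kirchberg-Winter}, by correcting an approximation using a quasicentral approximate unit for $\mathbb{K}(\mathcal{F}(\mathcal{H})I_{\mathcal{H}})$ — after which the bound again follows immediately from Theorem \ref{main_thm}.
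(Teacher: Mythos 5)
Your argument is correct and follows essentially the same route as the paper: identify $\mathcal{O}(\mathcal{H})$ as the quotient $\mathcal{T}(\mathcal{H})/\mathbb{K}(\mathcal{F}(\mathcal{H})I_{\mathcal{H}})$ (Proposition \ref{useful_cuntz}) and invoke the fact that nuclear dimension does not increase under quotients, which combined with Theorem \ref{main_thm} gives the bound. The paper simply cites \cite[Proposition 2.3]{Winter-Zacharias2} for the quotient permanence, whereas you additionally supply a valid proof of that step via a Choi--Effros splitting (legitimate here since finite nuclear dimension forces $\mathcal{T}(\mathcal{H})$, hence $\mathcal{O}(\mathcal{H})$, to be nuclear and separable); this is a harmless elaboration rather than a different approach.
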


\begin{proof}
This follows from the fact that $\mathcal{O}(\mathcal{H})$ is a quotient of $\mathcal{T}(\mathcal{H})$ by the ideal $\mathbb{K}(\mathcal{F}(\mathcal{H})I_{\mathcal{H}})$, and by \cite[Proposition 2.3]{Winter-Zacharias2}.
\end{proof}
\section{Nuclear dimension of certain free products}\label{Amalgamated}
We now use the results given in the previous sections to deduce finite nuclear dimension of a certain class of reduced amalgamated free products. For a more comprehensive treatment of free products, see the monograph \cite{Voiculescu-Dykema-Nica}. We start with a well-known example. 

\begin{example}\label{free-group}
There is a *-isomorphism 
\begin{equation*}
C(\mathbb{T})\text{\large{\textasteriskcentered}} C(\mathbb{T}) \cong C^*_r(\mathbb{F}_2).
\end{equation*}
Here the free product is being taken with respect to the state $f\mapsto \int_{\mathbb{T}}f(z)dz$ on $C(\mathbb{T})$. 
\end{example}

The free group $\mathbb{F}_2$ on two generators is not amenable, so the reduced group $C^*$-algebra is not nuclear and in particular has infinite nuclear dimension.  What this shows is that unlike other canonical $C^*$-constructions, finite nuclear dimension is not in general preserved under the reduced amalgamated free product construction, even in the abelian case. However, the next result by Speicher from \cite{Speicher} shows that there are exceptions.  

\begin{prop}\label{Speicher}
Let $\mathcal{H}_i$ be correspondences over $A$. Then 
\begin{equation*}
(\mathcal{T}(\bigoplus_I\mathcal{H}_i), E_{\bigoplus \mathcal{H}_i})  \cong \text{\Large{\textasteriskcentered}}_A(\mathcal{T}(\mathcal{H}_i), E_{\mathcal{H}_i}). 
\end{equation*}
\end{prop}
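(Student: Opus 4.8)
The plan is to realise both sides of the asserted isomorphism concretely on Fock spaces and to exhibit an explicit unitary conjugacy. Write $(\mathcal{B},E) = \text{\Large{\textasteriskcentered}}_A(\mathcal{T}(\mathcal{H}_i),E_{\mathcal{H}_i})$ for the reduced amalgamated free product. The first step is to identify, for each $i$, the ``GNS'' Hilbert $A$-module of $(\mathcal{T}(\mathcal{H}_i),E_{\mathcal{H}_i})$ with $\mathcal{F}(\mathcal{H}_i)$ equipped with the cyclic vector $1_A\in\mathcal{H}_i^{\otimes 0}=A$: the Fock representation of $\mathcal{T}(\mathcal{H}_i)$ is faithful, $\mathcal{T}(\mathcal{H}_i)\ldotp 1_A$ is dense in $\mathcal{F}(\mathcal{H}_i)$, and $E_{\mathcal{H}_i}$ is the compression to the vacuum summand $A=\mathcal{H}_i^{\otimes 0}$. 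Consequently $\mathcal{B}$ is, by construction, the $C^*$-algebra generated by the common copy of $A$ together with the images $\iota_i(\mathcal{T}(\mathcal{H}_i))$ acting on the iterated interior tensor product module
\[ \mathcal{F} \;=\; A\;\oplus\;\bigoplus_{n\ge 1}\;\bigoplus_{i_1\ne i_2\ne\cdots\ne i_n}\mathcal{F}(\mathcal{H}_{i_1})^{\circ}\otimes_A\cdots\otimes_A\mathcal{F}(\mathcal{H}_{i_n})^{\circ}, \]
where $\mathcal{F}(\mathcal{H}_i)^{\circ}=\bigoplus_{k\ge 1}\mathcal{H}_i^{\otimes k}$, with $\iota_i(c)=c\otimes 1$ under the identification $\mathcal{F}\cong\mathcal{F}(\mathcal{H}_i)\otimes_A\mathcal{F}_{\widehat{\imath}}$ ($\mathcal{F}_{\widehat{\imath}}$ being the part of $\mathcal{F}$ not beginning with the letter $i$), and $E$ the compression to the vacuum summand $A$.

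The second step constructs an $A$-bimodule unitary $U\colon\mathcal{F}(\bigoplus_I\mathcal{H}_i)\to\mathcal{F}$. Distributing interior tensor products over the direct sums defining the $\mathcal{F}(\mathcal{H}_{i_l})^{\circ}$ gives, for an alternating word $(i_1,\dots,i_n)$,
\[ \mathcal{F}(\mathcal{H}_{i_1})^{\circ}\otimes_A\cdots\otimes_A\mathcal{F}(\mathcal{H}_{i_n})^{\circ}\;\cong\;\bigoplus_{k_1,\dots,k_n\ge 1}\mathcal{H}_{i_1}^{\otimes k_1}\otimes\cdots\otimes\mathcal{H}_{i_n}^{\otimes k_n}, \]
and, concatenating elementary tensors, the right-hand side is exactly the direct sum of those summands $\mathcal{H}_{j_1}\otimes\cdots\otimes\mathcal{H}_{j_m}$ of $(\bigoplus_I\mathcal{H}_i)^{\otimes m}$ whose index word $(j_1,\dots,j_m)$ collapses to $(i_1,\dots,i_n)$ after merging consecutive repeats into blocks of lengths $k_1,\dots,k_n$. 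Since every nonempty word over $I$ has a unique such block decomposition, running over all alternating words and all block lengths $\ge 1$, together with the vacuum $A$, reassembles canonically to $\mathcal{F}(\bigoplus_I\mathcal{H}_i)=\bigoplus_m(\bigoplus_I\mathcal{H}_i)^{\otimes m}$. The resulting $U$ is an $A$-bimodule unitary which fixes the vacuum and carries the vacuum summand of one side onto that of the other.

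The crux is then the identity $UaU^*=a$ for $a\in A$ and $UT_xU^*=\iota_i(T_x^{(i)})$ for $x\in\mathcal{H}_i\subseteq\bigoplus_I\mathcal{H}_i$, where $T^{(i)}$ denotes the creation operator in $\mathcal{T}(\mathcal{H}_i)$. This is checked on elementary tensors by a short case analysis according to whether the first letter of the tensor equals $i$ or not: in either case the operation ``prepend $x$'' on the left matches the action of $T_x^{(i)}\otimes 1$ on $\mathcal{F}(\mathcal{H}_i)\otimes_A\mathcal{F}_{\widehat{\imath}}$ on the right. Granting this, $U$ conjugates the $C^*$-algebra generated by $A$ and $\{T_x:x\in\bigoplus_I\mathcal{H}_i\}$ — which is $\mathcal{T}(\bigoplus_I\mathcal{H}_i)$, by \eqref{Tspan} — onto the $C^*$-algebra generated by $A$ and $\bigcup_i\iota_i(\mathcal{T}(\mathcal{H}_i))$, which is $\mathcal{B}$. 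Finally, since $E_{\bigoplus\mathcal{H}_i}$ and $E$ are both the compression to the respective vacuum summands and $U$ identifies these summands $A$-bilinearly, $U(\cdot)U^*$ intertwines $E_{\bigoplus\mathcal{H}_i}$ with $E$, giving the claimed isomorphism of pairs. I expect the main (if essentially routine) obstacle to be exactly the bookkeeping of the word-reduction identification in the second step — verifying that the assembled map is a well-defined, $A$-bilinear, inner-product-preserving bijection on a dense subspace — together with this compatibility computation.

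An alternative, avoiding the explicit unitary, is to define a representation $(\pi,\tau)$ of $\bigoplus_I\mathcal{H}_i$ into $\mathcal{B}$ by letting $\pi$ be the common copy of $A$ and $\tau\big((x_i)_i\big)=\sum_i\iota_i(T_{x_i}^{(i)})$ (defined first on finitely supported tuples, where the relations force $\|\tau(x)\|^2=\|\langle x,x\rangle\|$, then extended by continuity). One checks the Toeplitz--Pimsner relations — the identity $\tau(x)^*\tau(y)=\pi(\langle x,y\rangle)$ reduces, via amalgamated freeness and $(T_x^{(i)})^*\ldotp 1_A=0$, to the vanishing $\iota_i(T_{x_i}^{(i)})^*\iota_j(T_{y_j}^{(j)})=0$ for $i\ne j$ — notes that $(\pi,\tau)$ generates $\mathcal{B}$, that it admits the gauge action obtained by freely combining the (each $E_{\mathcal{H}_i}$-preserving, $A$-fixing) gauge actions of the factors, and that $E$ vanishes on $\overline{\mathrm{span}}\{\tau(x)\tau(y)^*\}$ while restricting to the identity on $\pi(A)$. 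Theorem \ref{Toeplitz-Properties}(2) then yields $\mathcal{B}\cong\mathcal{T}(\bigoplus_I\mathcal{H}_i)$, and a separate check that $E$ is the gauge-compatible vacuum expectation matches the conditional expectations.
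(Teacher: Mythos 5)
Your proposal is correct, but there is nothing in the paper to compare it against: the paper states this proposition as a quotation of Speicher's result \cite{Speicher} (the statement also appears, with essentially the proof you give, in \cite{Dykema-Shlyakhtenko}) and supplies no argument of its own. Your first route is precisely the standard proof underlying that citation: the GNS module of $(\mathcal T(\mathcal H_i),E_{\mathcal H_i})$ is $(\mathcal F(\mathcal H_i),1_A)$ with the (faithful) Fock representation; distributing interior tensor products over $\bigoplus_I\mathcal H_i$ and grouping maximal blocks of equal indices identifies $\mathcal F(\bigoplus_I\mathcal H_i)$ $A$-bilinearly with the amalgamated free product of the pointed modules $(\mathcal F(\mathcal H_i),1_A)$; under this unitary $a\mapsto a$ and $T_x\mapsto T_x^{(i)}\otimes 1$ for $x\in\mathcal H_i$; and since $x\mapsto T_x$ is linear and isometric and finitely supported vectors are dense, $A$ together with $\{T_x : x\in\mathcal H_i,\ i\in I\}$ generates $\mathcal T(\bigoplus_I\mathcal H_i)$, so the conjugation matches the two algebras and visibly intertwines the two vacuum expectations. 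One caveat concerns your alternative route: amalgamated freeness by itself only gives the moment identity $E\bigl(\iota_i(T_{x_i}^{(i)})^*\iota_j(T_{y_j}^{(j)})\bigr)=0$, whereas the Toeplitz relation needs the operator identity $\iota_i(T_{x_i}^{(i)})^*\iota_j(T_{y_j}^{(j)})=0$ for $i\ne j$; passing from vanishing moments to a vanishing operator requires faithfulness of the defining representation applied to all moments $E(b^*z^*zb)$, or, much more simply, the direct observation in the free-product module that $\iota_j(T_{y_j}^{(j)})$ maps everything into words beginning with the letter $j$, which $\iota_i(T_{x_i}^{(i)})^*$ annihilates. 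This is a small repair, and the needed computation is exactly the one your first (and complete) route already makes explicit.
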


\begin{prop}\label{2-Main-Theorem}
Let $\mathcal{H}$ be a correspondence over $A$ and let $I$ be an arbitrary (countable) set. If $\dim_{\mathrm{Rok}}(\mathcal{H}) < \infty$, then $\dim_{\mathrm{Rok}}(\bigoplus_I\mathcal{H}) < \infty$.  
\end{prop}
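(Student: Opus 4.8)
The plan is to prove the stronger quantitative bound $\dim_{\mathrm{Rok}}(\bigoplus_I\mathcal{H}) \le \dim_{\mathrm{Rok}}(\mathcal{H})$ by showing that the \emph{same} Rokhlin contractions work. Write $\mathcal{K} = \bigoplus_I\mathcal{H}$, with its diagonal left action $a\ldotp(z_i)_{i\in I} = (a\ldotp z_i)_{i\in I}$. Since $I$ is countable, $\mathcal{K}$ is still countably generated and $A$ is unchanged, so $\dim_{\mathrm{Rok}}(\mathcal{K})$ is defined. Fix $d \ge \dim_{\mathrm{Rok}}(\mathcal{H})$, fix $p\in\mathbb{N}$, and take positive contractions $\{f_k^l\}_{l=0,\dots,d;\,k\in\mathbb{Z}/p}\subset A_\infty\cap A'$ witnessing $\dim_{\mathrm{Rok}}(\mathcal{H})\le d$ for this $p$. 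The set $A_\infty\cap A'$, together with conditions (1) and (2) of Definition \ref{Rokhlin-Dimension-Def}, makes no reference to the correspondence, so these are inherited verbatim by $\mathcal{K}$. Hence the entire problem reduces to checking condition (3) for $\mathcal{K}$: that $z\ldotp f_k^l = f_{k+1}^l\ldotp z$ in $\mathcal{K}_\infty$ for every $z=(z_i)_{i\in I}\in\mathcal{K}$.

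To handle this, first I would reduce to the finitely supported case. For a finite subset $I_0\subseteq I$, let $z^{I_0}\in\mathcal{K}$ be the truncation which agrees with $z$ on $I_0$ and vanishes off $I_0$, so that $z^{I_0}=\sum_{i\in I_0}\iota_i(z_i)$, where $\iota_i\colon\mathcal{H}\hookrightarrow\mathcal{K}$ is the inclusion of the $i$th summand. Each $\iota_i$ is an isometric morphism of correspondences (it intertwines the left actions and preserves the $A$-valued inner product), hence extends to an inclusion $\mathcal{H}_\infty\hookrightarrow\mathcal{K}_\infty$ of sequence correspondences. Applying $\iota_i$ to the identity $z_i\ldotp f_k^l=f_{k+1}^l\ldotp z_i$ — which holds in $\mathcal{H}_\infty$ because $z_i\in\mathcal{H}$ and the $f_k^l$ satisfy condition (3) for $\mathcal{H}$ — and summing over $i\in I_0$ gives $z^{I_0}\ldotp f_k^l=f_{k+1}^l\ldotp z^{I_0}$ in $\mathcal{K}_\infty$. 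Consequently, writing $w=z-z^{I_0}$,
\[ z\ldotp f_k^l-f_{k+1}^l\ldotp z \;=\; w\ldotp f_k^l-f_{k+1}^l\ldotp w \qquad\text{in }\mathcal{K}_\infty. \]

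The remaining point — and this is the one place where the infinitude of $I$ must be confronted — is that the tail $w$ has small module norm, which is why one works with the Hilbert-module norm rather than summing errors summand by summand. Since $f_k^l$ and $f_{k+1}^l$ are contractions, the right module action by $f_k^l$ and the left action by $f_{k+1}^l$ are both contractive operators on $\mathcal{K}_\infty$, so
\[ \|z\ldotp f_k^l-f_{k+1}^l\ldotp z\| = \|w\ldotp f_k^l-f_{k+1}^l\ldotp w\| \le 2\|w\|, \qquad \|w\|^2 = \Big\|\sum_{i\notin I_0}\langle z_i,z_i\rangle\Big\|. \]
Because $z\in\mathcal{K}$, the series $\sum_{i}\langle z_i,z_i\rangle$ converges in norm in $A$, so the right-hand side tends to $0$ as $I_0$ exhausts $I$; the left-hand side is independent of $I_0$, so it is $0$. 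This establishes condition (3) for $\mathcal{K}$ and finishes the proof. (One could instead run the argument through the quantitative reformulation in Remark \ref{Rokhlin-Def-Rem}(i): given test vectors $z^{(1)},\dots,z^{(m)}\in\mathcal{K}$, choose $I_0$ so that the tails $\|\sum_{i\notin I_0}\langle z_i^{(j)},z_i^{(j)}\rangle\|$ are small for all $j$, invoke $\dim_{\mathrm{Rok}}(\mathcal{H})\le d$ for the finite test set $\{z_i^{(j)}:i\in I_0,\ j\le m\}\subset\mathcal{H}$ with a correspondingly small tolerance, and use the same tail bound on the leftover summands.)
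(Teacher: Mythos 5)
Your proposal is correct and in essence follows the paper's own argument: the key point in both is that the very same Rokhlin contractions for $\mathcal{H}$ witness the Rokhlin property of $\bigoplus_I\mathcal{H}$ (so in fact $\dim_{\mathrm{Rok}}(\bigoplus_I\mathcal{H})\le\dim_{\mathrm{Rok}}(\mathcal{H})$), verified by reducing to finitely supported elements and using the diagonal left action componentwise. The only difference is bookkeeping: the paper runs the argument through the finite-set/$\epsilon$ reformulation of Remark \ref{Rokhlin-Def-Rem}(i), choosing tolerance $\epsilon/N$ for test vectors supported on at most $N$ summands, whereas you verify condition (3) exactly in $\mathcal{K}_\infty$ via the truncation-plus-tail estimate, which is a clean equivalent of the same idea.
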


\begin{proof}
The left action of $A$ on $\bigoplus_I\mathcal{H}$ is given by $a\ldotp (x_i)_{i\in I} = (a\ldotp x_i)_{i\in I}$. Let $\epsilon > 0$, $p \in \mathbb{N}$, $F$ a finite subset of $A$ and $\mathcal{V}$ a finite subset of $\bigoplus_I\mathcal{H}$.
Since the finitely supported elements of $\bigoplus_I \mathcal H$ are dense, we may assume that $\mathcal V$ consists only of finitely supported elements; let $N\in \mathbb N$ be such that each is supported on at most $N$ elements.
Set $\mathcal V'$ equal to the set of all elements in $\mathcal H$ which appear in some component of some element of $\mathcal V$; this is a finite set.
Find Rokhlin contractions $(f_k^l)_{k\in \mathbb Z/p}^{l=0,\ldots,d} \in A$ for the correspondence $\mathcal H$, with respect to $(\epsilon/N,p,F,\mathcal{V}')$.
For $x=(x_i)_{i\in I} \in \mathcal V$, 
\[ f_k^l\ldotp (x_i) = (f_k^l\ldotp x_i) \approx_{N\epsilon/N} (x_i\ldotp f_{k+1}^l) = (x_i) \ldotp f_{k+1}^l. \]
Thus, the $f_k^l$ are Rokhlin contractions for $\bigoplus_I \mathcal H$, with respect to $(\epsilon,p,F,\mathcal V)$.
\end{proof}

\begin{question}
For countably generated Hilbert modules $\mathcal H_1$ and $\mathcal H_2$, is it the case that
\[ \dim_{\mathrm{Rok}}(\mathcal H_1 \oplus \mathcal H_2) = \max\{ \dim_{\mathrm{Rok}}(\mathcal H_1), \dim_{\mathrm{Rok}}(\mathcal H_2)\}? \]
\end{question}

In light of Theorem \ref{Main-Theorem} and Proposition \ref{2-Main-Theorem}, we obtain the following statement. 

\begin{thm}\label{Second-Main-Theorem}
Let $\mathcal{H}$ be a finitely generated projective correspondence over $A$. If $\dim_{\mathrm{nuc}}(A)$ and $\dim_{\mathrm{Rok}}(\mathcal{H})$ are both finite, then the amalgamated free product of any finite number of copies of $\mathcal{T}(\mathcal{H})$ (with respect to the canonical expectation $E_{\mathcal{H}}$) has finite nuclear dimension. 
\end{thm}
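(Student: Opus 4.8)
The plan is to reduce the statement to a single Toeplitz--Pimsner algebra and then invoke Theorem~\ref{main_thm}. Fix $n \geq 1$ and write $\mathcal{H}^{\oplus n} = \bigoplus_{i=1}^{n}\mathcal{H}$, equipped with the diagonal left $A$-action $a \ldotp (x_i)_i = (a\ldotp x_i)_i$. Applying Proposition~\ref{Speicher} with index set $I = \{1,\dots,n\}$ and $\mathcal{H}_i = \mathcal{H}$ for every $i$, the $n$-fold reduced amalgamated free product $\text{\Large{\textasteriskcentered}}_A(\mathcal{T}(\mathcal{H}), E_{\mathcal{H}})$ is isomorphic --- as a pair consisting of a $C^*$-algebra together with its canonical conditional expectation onto $A$ --- to $(\mathcal{T}(\mathcal{H}^{\oplus n}), E_{\mathcal{H}^{\oplus n}})$. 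Thus it suffices to show that $\mathcal{T}(\mathcal{H}^{\oplus n})$ has finite nuclear dimension, and for this I would check that the correspondence $\mathcal{H}^{\oplus n}$ satisfies the hypotheses of Theorem~\ref{main_thm}.

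First, $\mathcal{H}^{\oplus n}$ is again a finitely generated projective correspondence: it is an orthogonal direct summand of $A^{mn}$ whenever $\mathcal{H}$ is a summand of $A^m$, and the diagonal action is patently a unital injective $*$-homomorphism into $\mathbb{B}(\mathcal{H}^{\oplus n})$. Hence, by Example~\ref{QD-examples}(i), for every $p \in \mathbb{N}$ the algebra $\mathbb{K}(\mathcal{F}_p(\mathcal{H}^{\oplus n}))$ is unital and equals $D_p(\mathcal{H}^{\oplus n}) = \mathbb{B}(\mathcal{F}_p(\mathcal{H}^{\oplus n}))$, so the quasidiagonal-like hypothesis of Theorem~\ref{main_thm} holds. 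Second, $\dim_{\mathrm{nuc}}(A) < \infty$ by assumption (and, as in Theorem~\ref{main_thm}, $A$ is separable and unital), while $\dim_{\mathrm{Rok}}(\mathcal{H}^{\oplus n}) < \infty$ by Proposition~\ref{2-Main-Theorem} applied to the finite index set $\{1,\dots,n\}$.

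Finally, I would invoke Theorem~\ref{main_thm} for the correspondence $\mathcal{H}^{\oplus n}$ to obtain
\begin{equation*}
\dim_{\mathrm{nuc}}(\mathcal{T}(\mathcal{H}^{\oplus n})) + 1 \;\leq\; 2(\dim_{\mathrm{nuc}}(A) + 1)(\dim_{\mathrm{Rok}}(\mathcal{H}^{\oplus n}) + 1) \;<\; \infty,
\end{equation*}
which, transported across the isomorphism of the first paragraph, completes the proof (the Cuntz--Pimsner version follows identically via Corollary~\ref{Main-Theorem_Cor}). I do not expect a genuine obstacle here --- the argument is essentially a bookkeeping assembly of facts already established --- and the only points deserving (routine) care are confirming that $\mathcal{H}^{\oplus n}$ inherits a bona fide correspondence structure which is again finitely generated and projective, and that the isomorphism of Proposition~\ref{Speicher} intertwines the conditional expectations, so that "the amalgamated free product with respect to $E_{\mathcal{H}}$" is unambiguously the algebra to which Theorem~\ref{main_thm} is being applied.
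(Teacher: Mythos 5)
Your argument is correct and follows exactly the route the paper intends: identify the $n$-fold amalgamated free product with $\mathcal{T}(\mathcal{H}^{\oplus n})$ via Proposition \ref{Speicher}, note that $\mathcal{H}^{\oplus n}$ is again finitely generated projective (so Example \ref{QD-examples}(i) supplies the quasicentral-projection hypothesis) and has finite Rokhlin dimension by Proposition \ref{2-Main-Theorem}, then apply Theorem \ref{main_thm}. Your write-up even spells out the verification of the technical hypothesis for $\mathcal{H}^{\oplus n}$, which the paper leaves implicit.
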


In comparison with Example \ref{free-group}, the following example demonstrates an interesting consequence of Theorem \ref{Second-Main-Theorem}. 

\begin{example}
If $\varphi$ is a minimal homeomorphism of $\mathbb{T}$, then $\dim_{\mathrm{Rok}}(\varphi^*)$ is finite. By Theorem \ref{Second-Main-Theorem}, the nuclear dimension of 
\begin{equation*}
\mathcal{T}(C(\mathbb{T})^{\varphi^*})\text{\large{\textasteriskcentered}}\mathcal{T}(C(\mathbb{T})^{\varphi^*}) \cong \mathcal{T}(C(\mathbb{T})\oplus C(\mathbb{T}))
\end{equation*}
is also finite. Here the free product is being taken with respect to the usual conditional expectation $E_{C(\mathbb{T})\oplus C(\mathbb{T})}$. 
\end{example}
\section{Classifiability of certain Cuntz--Pimsner algebras}\label{classifiability}

In this section we show that in the presence of finite Rokhlin dimension, the algebras $\mathcal{O}(\mathcal{H})$ are often \emph{classifiable}, by which we mean unital and simple with finite nuclear dimension and satisfying the UCT (cf.\ \cite{Tikuisis-White-Winter}).
``Satisfying the UCT'' is a $KK$-theoretic property for separable $C^*$-algebras, equivalent to being $KK$-equivalent to an abelian $C^*$-algebra, introduced by Rosenberg and Schochet in \cite{Rosenberg-Schochet}.
The class of algebras which satisfy the UCT (sometimes called the \emph{bootstrap class}) is closed under a number of natural operations (see \cite[\S22.3 and \S23]{Blackadar:KBook}), although it is unknown whether every separable nuclear $C^*$-algebra is in this class.

\begin{lem}\label{classif_lem_1}
For simple $C^*$-algebras, if $A$ satisfies the UCT, so does $\mathcal{O}(\mathcal{H})$. 
\end{lem}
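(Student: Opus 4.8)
The plan is to exhibit $\mathcal{O}(\mathcal{H})$ as a $C^*$-algebra built out of $A$ by operations that preserve the UCT class, using the six-term exact sequences computed by Pimsner. The starting point is the structural fact, recalled in Proposition \ref{useful_cuntz} and Remark \ref{Cuntz=Pimsner}, that for a finitely generated projective correspondence one has the extension
\[ 0 \to \mathbb{K}(\mathcal{F}(\mathcal{H})) \to \mathcal{T}(\mathcal{H}) \to \mathcal{O}(\mathcal{H}) \to 0. \]
Since the UCT class is closed under extensions (see \cite[\S22.3]{Blackadar:KBook}) and $\mathbb{K}(\mathcal{F}(\mathcal{H}))$ is stably isomorphic to a hereditary subalgebra of $A \otimes \mathbb{K}$ by Kasparov's stabilization theorem, it is enough to show that $\mathcal{T}(\mathcal{H})$ satisfies the UCT (hereditary subalgebras of UCT algebras again satisfy the UCT, and $A$ does by hypothesis).

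For $\mathcal{T}(\mathcal{H})$ itself, the key input is Pimsner's result that $\mathcal{T}(\mathcal{H})$ is $KK$-equivalent to $A$; equivalently, the inclusion $A \hookrightarrow \mathcal{T}(\mathcal{H})$ is a $KK$-equivalence. (This is where the hypothesis that $\mathcal H$ is a genuine correspondence, with injective unital left action, and separability/countable generation, get used; it is a theorem from \cite{Pimsner}, and should be invoked as such rather than reproved.) Granting this, since $KK$-equivalence preserves membership in the bootstrap class, $\mathcal{T}(\mathcal{H})$ satisfies the UCT because $A$ does; then the extension argument of the previous paragraph finishes the proof. An alternative route, avoiding the full $KK$-equivalence, is to feed Pimsner's six-term exact sequence
\[ \cdots \to K_i(I_{\mathcal{H}}) \to K_i(A) \to K_i(\mathcal{O}(\mathcal{H})) \to K_{i+1}(I_{\mathcal{H}}) \to \cdots \]
into a five-lemma / mapping-cone comparison of $K$-theory with $KK(-,B)$-functors, but invoking the $KK$-equivalence directly is cleanest.

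One subtlety worth flagging: the lemma as stated refers to ``simple $C^*$-algebras,'' and simplicity of $\mathcal{O}(\mathcal{H})$ (for suitable $\mathcal H$, via Schweizer's criterion) is invoked elsewhere in the paper; but simplicity is not actually needed for the UCT conclusion — what is needed is separability of $A$ (hence of $\mathcal{T}(\mathcal{H})$ and $\mathcal{O}(\mathcal{H})$, which holds since $A$ is separable and $\mathcal H$ countably generated) and the Pimsner $KK$-equivalence. So the proof proceeds purely at the level of $KK$-theory and closure properties of the bootstrap class. The main obstacle is not really an obstacle so much as a bookkeeping point: one must make sure the cited form of Pimsner's theorem applies to the (possibly non-finitely-generated, but countably generated) correspondences in play, and that the relevant closure properties — extensions, stable isomorphism, hereditary subalgebras, $KK$-equivalence — are quoted in the precise forms available in \cite{Blackadar:KBook}. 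If one restricts, as the corollary ultimately does, to finitely generated projective $\mathcal H$, the extension by compacts is literally $\mathbb{K}(\mathcal{F}(\mathcal{H}))$ and everything is transparent.
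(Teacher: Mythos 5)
Your skeleton matches the paper's: Pimsner's $KK$-equivalence of $A$ with $\mathcal{T}(\mathcal{H})$ handles the Toeplitz algebra, and the two-out-of-three property for the extension realizing $\mathcal{O}(\mathcal{H})$ as a quotient of $\mathcal{T}(\mathcal{H})$ reduces everything to showing that the ideal satisfies the UCT. The gap is in how you dispose of that ideal. First, for the lemma as stated the ideal is $\mathbb{K}(\mathcal{F}(\mathcal{H})I_{\mathcal{H}})$ (Proposition \ref{useful_cuntz}), not $\mathbb{K}(\mathcal{F}(\mathcal{H}))$: your argument only treats the finitely generated projective case, whereas the lemma (and its use in Corollary \ref{application2}) is for arbitrary countably generated correspondences. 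Second, and more seriously, the closure property you quote --- ``hereditary subalgebras of UCT algebras again satisfy the UCT'' --- is not an available fact. For any separable nuclear $B$, the suspension $SB$ is an ideal (hence a hereditary subalgebra) of the cone $CB$, which is contractible, hence $KK$-equivalent to $0$ and in the bootstrap class; if your closure property were known, every separable nuclear $C^*$-algebra would satisfy the UCT, i.e., it is equivalent to the open UCT problem. What is true, and what the paper actually uses, is that a \emph{full} hereditary subalgebra of a separable $C^*$-algebra is stably isomorphic to it by Brown's theorem \cite{Brown}, and the UCT is invariant under stable isomorphism/Morita equivalence \cite{Blackadar:KBook}.

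This is precisely where simplicity enters, so your closing claim that simplicity is not needed is unjustified at the lemma's level of generality. In general, $\mathbb{K}(\mathcal{F}(\mathcal{H})I_{\mathcal{H}})$ is a hereditary subalgebra of $\mathbb{K}(\mathcal{F}(\mathcal{H})I_{\mathcal{H}}\oplus\ell^2(A))\cong A\otimes\mathbb{K}$ which is Morita equivalent to the ideal $I_{\mathcal{H}}\unlhd A$, and whether an ideal of a UCT algebra satisfies the UCT is not known (same cone/suspension obstruction). The paper assumes $A$ simple exactly so that $A\otimes\mathbb{K}$ is simple, every nonzero hereditary subalgebra is full, and Brown's theorem applies. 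In your restricted setting the conclusion does hold without simplicity, but for a different reason than the one you give: when $\mathcal{H}$ is finitely generated projective one has $I_{\mathcal{H}}=A$, and $\mathcal{F}(\mathcal{H})$ is a full Hilbert $A$-module (it contains $\mathcal{H}^{\otimes 0}=A$), so $\mathbb{K}(\mathcal{F}(\mathcal{H}))$ is a \emph{full} hereditary subalgebra of $A\otimes\mathbb{K}$, hence Morita equivalent to $A$. To repair the proof: argue fullness (via simplicity, as the paper does, or via fullness of the Fock module in the finitely generated projective case) instead of invoking a hereditary-subalgebra closure property, and work with the general ideal $\mathbb{K}(\mathcal{F}(\mathcal{H})I_{\mathcal{H}})$ rather than only $\mathbb{K}(\mathcal{F}(\mathcal{H}))$.
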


\begin{proof}
By \cite[Theorem 4.4]{Pimsner}, the Toeplitz Pimsner algebra $\mathcal{T}(\mathcal{H})$ is $KK$-equivalent to $A$. Consequently, $\mathcal{T}(\mathcal{H})$ satisfies the UCT whenever $A$ does. Since $\mathcal{O}(\mathcal{H})$ is a quotient of $\mathcal{T}(\mathcal{H})$ by the ideal $\mathbb{K}(\mathcal{\mathcal{F}(\mathcal{H})I_{\mathcal{H}}})$, it suffices to prove (by the two-out-of-three principle) that $\mathbb{K}(\mathcal{\mathcal{F}(\mathcal{H})I_{\mathcal{H}}})$ satisfies the UCT. To this end observe that $\mathcal{\mathcal{F}(\mathcal{H})I_{\mathcal{H}}}$ is countably generated, so Kasparov's stabilization result implies 
\begin{equation*}
\mathbb{K}(\mathcal{\mathcal{F}(\mathcal{H})I_{\mathcal{H}}}\oplus\ell^2(A))\cong \mathbb{K}(\ell^2(A)) \cong A\otimes\mathbb{K}\sim_{\mathrm{Morita}} A,
\end{equation*}
where $\sim_{\mathrm{Morita}}$ denotes Morita equivalence of $C^*$-algebras. Since $A$ is simple, so is $A\otimes \mathbb{K}\cong \mathbb{K}(\mathcal{\mathcal{F}(\mathcal{H})I_{\mathcal{H}}}\oplus\ell^2(A))$. This means every hereditary $C^*$ subalgebra $B\subseteq \mathbb{K}(\mathcal{\mathcal{F}(\mathcal{H})I_{\mathcal{H}}}\oplus\ell^2(A))$ is full, and hence satisfies
\begin{equation*}
B\sim_{\mathrm{Morita}} \mathbb{K}(\mathcal{\mathcal{F}(\mathcal{H})I_{\mathcal{H}}}\oplus\ell^2(A))\sim_{\mathrm{Morita}} A. 
\end{equation*}
The result follows because $\mathbb{K}(\mathcal{\mathcal{F}(\mathcal{H})I_{\mathcal{H}}})$ is hereditary in $\mathbb{K}(\mathcal{\mathcal{F}(\mathcal{H})I_{\mathcal{H}}}\oplus\ell^2(A))$, and because the UCT is preserved under Morita equivalence. 
\end{proof}

Next, following Schweizer, we deal with simplicity of $\mathcal{O}(\mathcal{H})$. 

\begin{definition}
$\mathcal{H}$ is called \emph{minimal} if there are no nontrivial ideals $J \subset A$ such that $\{\langle x,j\ldotp y\rangle \ : \ x,y\in \mathcal{H},\ j\in J\}\subseteq J$.  $\mathcal{H}$ is \emph{nonperiodic} if $\mathcal{H}^{\otimes k}\approx A \Rightarrow k = 0$, where we regard $A$ as the identity correspondence over itself. 
\end{definition}

Generalizing the well-known fact that $C(X)\rtimes_{\sigma}\mathbb{Z}$ is simple if and only if the homeomorphism $\sigma$ is minimal, Schweizer proved the following beautiful result. 

\begin{thm}[\cite{Schweizer}]\label{thm:schweizer}
$\mathcal{O}(\mathcal{H})$ is simple if and only if $\mathcal{H}$ is minimal and nonperiodic. 
\end{thm}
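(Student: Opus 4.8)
The plan is to prove both implications; the forward direction is comparatively soft, while the converse (minimal and nonperiodic $\Rightarrow$ simple) is the substantial one. The unifying tool is the gauge action $\beta\colon\mathbb{T}\to\mathrm{Aut}(\mathcal{O}(\mathcal{H}))$ and the associated conditional expectation $E(x)=\int_{\mathbb{T}}\beta_z(x)\,dz$ onto the fixed-point algebra (the ``core'') $\mathcal{F}=\mathcal{O}(\mathcal{H})^{\beta}=\overline{\bigcup_n\overline{\mathrm{span}}\{S_\mu S_\nu^*:|\mu|=|\nu|\le n\}}$; averaging over a compact group makes $E$ faithful on positive elements. A preliminary structural fact, which I would isolate as a lemma, is that $\mathcal{F}$ is an inductive limit of $C^*$-algebras each Morita equivalent to $A$, with connecting maps built from the correspondence structure, and that accordingly the ideals of $\mathcal{F}$ invariant under the maps $x\mapsto S_\xi xS_\xi^*$ ($\xi\in\mathcal{H}$) are in inclusion-preserving bijection with the invariant ideals of $A$ (those $J\unlhd A$ with $\langle\mathcal{H},J\ldotp\mathcal{H}\rangle\subseteq J$). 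In particular, $\mathcal{H}$ is minimal precisely when $\mathcal{F}$ has no nontrivial such ``$\mathcal{H}$-invariant'' ideal.

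For the forward direction, assume $\mathcal{O}(\mathcal{H})$ is simple. If $\mathcal{H}$ were not minimal, a proper nonzero invariant ideal $J\unlhd A$ would generate a gauge-invariant ideal of $\mathcal{O}(\mathcal{H})$ that is nonzero (it contains $J$) and proper (its intersection with $\mathcal{F}$ is the $\mathcal{H}$-invariant ideal of $\mathcal{F}$ matching $J$, which is proper, hence omits $1$) --- contradicting simplicity; so $\mathcal{H}$ is minimal. If $\mathcal{H}$ were periodic, say $\mathcal{H}^{\otimes k}\approx A$ with $k>0$, then $\mathcal{H}^{\otimes k}$ is an imprimitivity bimodule isomorphic to the identity correspondence; when $k=1$ this already forces $\mathcal{O}(\mathcal{H})\cong A\rtimes_{\mathrm{id}}\mathbb{Z}\cong A\otimes C(\mathbb{T})$, which is visibly non-simple, and for general $k$ the analogous $\mathbb{Z}/k$-twisted argument, built from the unitary $v=S_\xi$ (with $\xi\in\mathcal{H}^{\otimes k}$ corresponding to $1\in A$; it satisfies $va=av$ and $\beta_z(v)=z^kv$) and the resulting identification of a fixed-point subalgebra of $\mathcal{O}(\mathcal{H})$ with a crossed product carrying a $C(\mathbb{T})$-direction, again yields a nontrivial ideal. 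So simplicity forces $\mathcal{H}$ to be minimal and nonperiodic.

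For the converse, assume $\mathcal{H}$ is minimal and nonperiodic and let $0\ne I\unlhd\mathcal{O}(\mathcal{H})$; the goal is $I=\mathcal{O}(\mathcal{H})$. Everything hinges on the claim that $I\cap\mathcal{F}\ne 0$. Granting it: $I\cap\mathcal{F}$ is a nonzero ideal of $\mathcal{F}$, and it is $\mathcal{H}$-invariant because $S_\xi IS_\xi^*\subseteq I$ and $S_\xi\mathcal{F}S_\xi^*\subseteq\mathcal{F}$ for $\xi\in\mathcal{H}$; by the lemma and minimality, $I\cap\mathcal{F}=\mathcal{F}$, so $1\in I$ (as $1\in A\subseteq\mathcal{F}$), i.e.\ $I=\mathcal{O}(\mathcal{H})$. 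To prove the claim, take $0\ne a\in I_+$; faithfulness of $E$ gives $E(a)\ne 0$, and after rescaling assume $\|E(a)\|=1$. Fix $\epsilon>0$ and approximate $a$ within $\epsilon$ by a finite sum $\sum_{|n|\le N}a_n$ of its $\beta$-spectral components, so that $a_0=E(a)$ and each $a_n$ is homogeneous of degree $n$. The crux is to produce a positive contraction $b\in\mathcal{F}$ with $\|bE(a)b\|>1-\epsilon$ while $\|ba_nb\|<\epsilon$ for all $n$ with $1\le|n|\le N$; then $bab\approx_{2\epsilon}bE(a)b$ lies within $2\epsilon$ of an element of norm $>1-\epsilon$, and since $bab\in I\cap\mathcal{F}$, letting $\epsilon\to 0$ exhibits a nonzero element of $I\cap\mathcal{F}$.

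I expect the construction of this $b$ to be the main obstacle: it must act almost like a unit on the diagonal part $E(a)$ yet simultaneously kill the finitely many off-diagonal parts $a_{\pm1},\dots,a_{\pm N}$. Nonperiodicity is exactly what makes this possible. Working at a sufficiently high stage $\mathbb{K}(\mathcal{H}^{\otimes m})$ of the core (with $m$ large enough that $E(a)$ is well approximated there), the hypotheses $\mathcal{H}^{\otimes n}\not\approx A$ for $1\le n\le N$ prevent any homogeneous degree-$n$ element from acting like an invertible on a large subspace, leaving enough ``room'' to choose $b$ concentrated where $E(a)$ is near its norm and essentially orthogonal to the ranges of the $a_n$; the quantitative version of this, and the bookkeeping needed to make a single $b$ work for all $n$ at once, is the technical heart of Schweizer's argument. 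When $\mathcal{H}=A^{\alpha}$ for an automorphism $\alpha$, this step specializes to the familiar fact that essential freeness of $\alpha$ lets one compress a crossed-product element onto the image of the canonical conditional expectation --- recovering the classical proof that $C(X)\rtimes_\sigma\mathbb{Z}$ is simple for minimal $\sigma$.
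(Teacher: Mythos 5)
The paper does not actually prove this theorem --- it is imported wholesale from \cite{Schweizer} --- so the question is whether your sketch would establish it on its own, and as written it would not. The decisive gap is in the converse direction, at precisely the point you flag and then defer: given $0\neq a\in I_+$ with $\|E(a)\|=1$ and finitely many nonzero spectral components $a_n$, you must produce a positive contraction $b$ in the core with $\|bE(a)b\|>1-\epsilon$ while $\|b\,a_n\,b\|<\epsilon$ for $1\le|n|\le N$. Saying that nonperiodicity ``leaves enough room'' is not an argument; this estimate \emph{is} the theorem. Indeed, in the special case $\mathcal H=A^{\alpha}$ with $A$ simple (where $\mathcal O(A^\alpha)\cong A\rtimes_\alpha\mathbb Z$ and $(A^{\alpha})^{\otimes k}\approx A$ exactly when $\alpha^{k}$ is inner), the statement becomes Kishimoto's simplicity theorem for crossed products by outer actions, whose proof requires genuinely noncommutative outerness estimates; the compression trick you invoke at the end is the topological-freeness argument for $C(X)\rtimes_{\sigma}\mathbb Z$ and does not transfer to the general correspondence setting without substantial new input. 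Until that step is supplied, ``minimal and nonperiodic $\Rightarrow$ simple'' is unproved.

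Two further points. Your preliminary ``lemma'' --- that ideals of the core invariant under $x\mapsto S_{\xi}xS_{\xi}^{*}$ correspond to invariant ideals of $A$ --- also carries real weight in both directions of your argument (it is how $I\cap\mathcal F\neq 0$ becomes $1\in I$, and how a noninvariant-free $A$ gives properness of the gauge-invariant ideal generated by $J$), and it is only asserted; a proof needs the inductive-limit description of the core together with fullness of $\mathcal H$ (fullness does follow from minimality, since $\overline{\mathrm{span}}\langle\mathcal H,\mathcal H\rangle$ is an invariant ideal, but this should be said). Similarly, in the forward direction the case $\mathcal H^{\otimes k}\approx A$ with $k>1$ is only waved at: with this paper's definition of $\mathcal O(\mathcal H)$ one does get a unitary $v=S_{\xi}$ with $va=av$ and $\beta_{z}(v)=z^{k}v$, but passing from this eigenunitary to an actual proper ideal requires a Connes-spectrum-type argument, not just the $k=1$ observation that $\mathcal O(A^{\mathrm{id}})\cong A\otimes C(\mathbb T)$. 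Finally, for comparison: Schweizer's own proof runs along a different track, dilating $(A,\mathcal H)$ to an essential Hilbert bimodule over a direct-limit algebra and realizing $\mathcal O(\mathcal H)$ inside a crossed product by that bimodule, whose simplicity theory is then applied; so your direct spectral-subspace approach is not a reconstruction of the cited argument, and it stands or falls with the missing estimate above.
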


Note that minimality is automatic whenever the scalar algebra $A$ is simple, so in this case, we are left to worry about periodicity. 

\begin{lem}\label{classif_lem_2}
If $\mathcal{H}$ has finite Rokhlin dimension, it is nonperiodic. 
\end{lem}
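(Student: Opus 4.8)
The plan is to argue by contradiction. Suppose $\mathcal{H}^{\otimes m}\approx A$ for some $m\ge 1$, where $A$ denotes the identity correspondence; I will choose a Rokhlin tower whose height is coprime to $m$ and show that the Rokhlin contractions are forced to vanish, contradicting that they sum to $1$.

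First I would manufacture a distinguished vector. Fix an adjointable unitary $U\colon\mathcal{H}^{\otimes m}\to A$ witnessing the equivalence and put $\xi=U^*(1_A)\in\mathcal{H}^{\otimes m}$. Since $U$ is a right $A$-module map intertwining the left actions, and both actions on the identity correspondence are multiplication, a short computation gives $\langle\xi,\xi\rangle=1_A$ and $a\ldotp\xi=\xi\ldotp a$ for all $a\in A$; both identities persist when $\xi$ is regarded inside the correspondence $(\mathcal{H}^{\otimes m})_\infty$ over $A_\infty$ (the second one by applying it entrywise to a bounded sequence representing a given $b\in A_\infty$). Next I would invoke finite Rokhlin dimension $\dim_{\mathrm{Rok}}(\mathcal{H})=d<\infty$ with tower height $p=m+1$, obtaining positive contractions $\{f_k^l\}_{l\le d,\,k\in\mathbb{Z}/p}\subset A_\infty\cap A'$ with the three defining properties. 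By Remark \ref{Rokhlin-Def-Rem}(iii) the shift relation $z\ldotp f_k^l=f_{k+\lvert z\rvert}^l\ldotp z$ holds for every elementary tensor $z\in\mathcal{F}(\mathcal{H})$; specializing to elementary tensors of length $m$ and extending by density and boundedness (note that $\xi$ need not itself be an elementary tensor) yields $z\ldotp f_k^l=f_{k+m}^l\ldotp z$ for all $z\in\mathcal{H}^{\otimes m}$, in particular for $z=\xi$. Combining $\xi\ldotp f_k^l=f_{k+m}^l\ldotp\xi$ with $f_{k+m}^l\ldotp\xi=\xi\ldotp f_{k+m}^l$ gives $\xi\ldotp(f_k^l-f_{k+m}^l)=0$, and taking the $A_\infty$-valued inner product while using $\langle\xi,\xi\rangle=1$ and self-adjointness of $f_k^l$ forces $(f_k^l-f_{k+m}^l)^2=0$, hence $f_k^l=f_{k+m}^l$ for all $k,l$.

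To finish, since $\gcd(m,m+1)=1$ the shift $k\mapsto k+m$ acts transitively on $\mathbb{Z}/p$, so each $f_k^l$ is independent of $k$; writing $f^l$ for the common value, orthogonality of distinct tower elements (available because $p=m+1\ge 2$) gives $(f^l)^2=0$, whence $f^l=0$ for every $l$, and then $1=\sum_{k,l}f_k^l=0$, a contradiction. Thus $m=0$ and $\mathcal{H}$ is nonperiodic. The module-theoretic computations (the properties of $\xi$, and the induction propagating the shift relation from $\mathcal{H}$ to $\mathcal{H}^{\otimes m}$ via \eqref{TensorBalance}) are routine; the one genuine idea — rather than an obstacle — is the choice of a tower height coprime to the period, which collapses the entire Rokhlin tower to a single element and makes the contradiction immediate.
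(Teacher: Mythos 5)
Your argument is correct, and it rests on the same key device as the paper's proof --- the vector $\xi = U^{-1}(1_A)$ (the paper's $v$), which satisfies $\langle\xi,\xi\rangle=1_A$ and commutes with $A$, is used to transport one Rokhlin tower level onto another, which orthogonality of the tower then forbids --- but your execution is genuinely different. The paper passes to the correspondence $\mathcal{H}^{\otimes k}$, asserts (as ``straightforward to check'') that finite Rokhlin dimension is inherited by tensor powers, and runs the argument with height-two towers for $\mathcal{H}^{\otimes k}$ in the approximate formulation of Remark \ref{Rokhlin-Def-Rem}(i): from $v\ldotp f_1^l \approx f_2^l\ldotp v$ one gets $f_1^l\approx f_2^l$ by applying $U$, so approximate orthogonality forces $\lVert f_i^l\rVert < \sqrt{2\epsilon}$, contradicting $\sum_l(f_1^l+f_2^l)\approx 1$ for small $\epsilon$. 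You instead keep the exact towers for $\mathcal{H}$ itself in $A_\infty\cap A'$, choose the coprime height $p=m+1$, propagate the shift relation to length-$m$ tensors via Remark \ref{Rokhlin-Def-Rem}(iii) together with a density/boundedness argument (this step substitutes for the paper's tensor-power inheritance claim, and is valid: both $z\mapsto z\ldotp f_k^l$ and $z\mapsto f_{k+m}^l\ldotp z$ are bounded linear into $(\mathcal{H}^{\otimes m})_\infty$ and agree on the dense span of elementary tensors), and then transitivity of $k\mapsto k+m$ on $\mathbb{Z}/(m+1)$ collapses each colour to a single positive element, which exact orthogonality kills outright, contradicting $\sum_{k,l} f_k^l=1$. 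Your route buys an exact, estimate-free contradiction and avoids any appeal to Rokhlin dimension of tensor powers; the paper's route buys brevity (height-two towers, no coprimality trick) at the cost of the unproved-but-routine inheritance claim. Both arguments are sound.
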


\begin{proof}
Assume $\dim_{\text{Rok}}(\mathcal{H}) \le d$ and that there is some $k > 0$ such that $\mathcal{H}^{\otimes k}\approx A$. This means there is an adjointable unitary bimodule map $U:\mathcal{H}^{\otimes k} \rightarrow A$. Let $v = U^{-1}(1_A)$ and fix $\epsilon > 0$. It is straightforward to check that $\dim_{\text{Rok}}(\mathcal{H}) \le d$ implies $\dim_{\text{Rok}}(\mathcal{H}^{\otimes k}) \le d$, so we can find a set of positive contractions $\{f_1^l,f_2^l\}_{l = 0,\ldots,d}\subset A$ satisfying the following three estimates:
\begin{enumerate}
\item $\lVert f_1^lf_2^l\rVert < \epsilon$ for every $l$,
\item $\lVert\sum_{l=0}^d(f_1^l + f_2^l) - 1_A\rVert < \epsilon$, and 
\item $\lVert v\ldotp f_1^l - f_2^l\ldotp v\rVert < \epsilon$ for every $l$. 
\end{enumerate}
Applying $U$ to the third estimate gives $\lVert f_1^l - f_2^l\rVert < \epsilon$ for every $l$. Combining this with the first estimate gives $\lVert f_i^l\rVert < \sqrt{2\epsilon}$ for $i = 1,2$. But this implies $\lVert \sum_{l=0}^d f_1^l + f_2^l\rVert < 2(d+1)\sqrt{2\epsilon}$, which contradicts the second estimate when $\epsilon$ is sufficiently small. Thus, $\mathcal{H}$ must be nonperiodic.
\end{proof}

Combining Theorem \ref{thm:schweizer} and the previous lemma we see that for $C^*$-correspondences with finite Rokhlin dimension, $\mathcal{O}(\mathcal{H})$ is simple and unital whenever $A$ is. For simple $A$, by Lemma \ref{classif_lem_1}, $\mathcal{O}(\mathcal{H})$ satisfies the UCT whenever $A$ does. Thus applying Theorem \ref{main_thm}, we obtain the following corollary.

\begin{cor}
\label{application2}
If $A$ is classifiable, $\dim_{\text{Rok}}(\mathcal{H}) <\infty$ and for every $p\in \mathbb{N}$ there is an approximate unit consisting of projections in $\mathbb{K}(\mathcal{F}_p(\mathcal{H}))$ that are quasicentral in $D_p(\mathcal{H})$ (e.g., if $\mathcal{H}$ is finitely generated projective), then $\mathcal{O}(\mathcal{H})$ is classifiable. 
\end{cor}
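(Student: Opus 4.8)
The plan is to unwind the word \emph{classifiable} into its four constituent assertions about $\mathcal{O}(\mathcal{H})$ --- that it is unital, simple, satisfies the UCT, and has finite nuclear dimension --- and verify each one by quoting a result already established above. Since $A$ is classifiable it is in particular unital, simple, satisfies the UCT, and has finite nuclear dimension, and I would use each of these in turn. Unitality is the easiest: because $A$ is unital, its left action sends $1_A$ to the identity operator on the Fock space $\mathcal{F}(\mathcal{H})$, so $1_A$ is a unit for $\mathcal{T}(\mathcal{H})$, and its image under the quotient map $Q$ is a unit for $\mathcal{O}(\mathcal{H})$.

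For simplicity, I would invoke Theorem \ref{thm:schweizer}, which reduces the task to showing that $\mathcal{H}$ is minimal and nonperiodic. Minimality is automatic: since $A$ is simple there are no nontrivial ideals $J\subset A$ whatsoever, as remarked immediately after Theorem \ref{thm:schweizer}. Nonperiodicity is exactly the content of Lemma \ref{classif_lem_2}, whose hypothesis $\dim_{\mathrm{Rok}}(\mathcal{H})<\infty$ is assumed. For the UCT, I would appeal to Lemma \ref{classif_lem_1}: $A$ is simple and satisfies the UCT, so $\mathcal{O}(\mathcal{H})$ does as well. Finally, for finite nuclear dimension I would observe that the standing hypothesis --- for each $p$ an approximate unit of projections in $\mathbb{K}(\mathcal{F}_p(\mathcal{H}))$ quasicentral in $D_p(\mathcal{H})$ --- is precisely the technical condition required by Theorem \ref{main_thm}, so Corollary \ref{Main-Theorem_Cor} applies and bounds $\dim_{\mathrm{nuc}}(\mathcal{O}(\mathcal{H}))$ in terms of $\dim_{\mathrm{nuc}}(A)$ and $\dim_{\mathrm{Rok}}(\mathcal{H})$, both finite. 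When $\mathcal{H}$ is finitely generated projective this technical condition holds automatically by Example \ref{QD-examples}(i), which is why that case is singled out in the statement. Assembling the four points yields that $\mathcal{O}(\mathcal{H})$ is classifiable.

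Honestly, there is no real obstacle in this corollary: all the substantive work has already been carried out, in Theorem \ref{main_thm} (finite nuclear dimension) and in the Schweizer-type analysis leading to Theorem \ref{thm:schweizer} and Lemma \ref{classif_lem_2} (simplicity). The only thing to be careful about is bookkeeping --- checking that the hypotheses of each cited result are genuinely in force, in particular that ``$A$ classifiable'' is strong enough to provide both simplicity of $A$ (which is what makes $\mathcal{H}$ minimal and lets Lemma \ref{classif_lem_1} apply) and the UCT for $A$. Once these are lined up the proof is essentially a one-line citation of the preceding discussion together with Theorem \ref{main_thm}.
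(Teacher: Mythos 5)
Your proof is correct and follows essentially the same route as the paper: the paper's own argument (given in the paragraph preceding the corollary) likewise combines Theorem \ref{thm:schweizer} with Lemma \ref{classif_lem_2} for simplicity, Lemma \ref{classif_lem_1} for the UCT, and Theorem \ref{main_thm} (via Corollary \ref{Main-Theorem_Cor}) for finite nuclear dimension.
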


\begin{example}
Suppose $A$ is a unital Kirchberg algebra and $\alpha:A\rightarrow A$ is an automorphism with finite Rokhlin dimension (such automorphisms are generic by \cite[Theorem 3.4]{Hirshberg-Winter-Zacharias}). If $\mathcal{H}$ is a countably generated free $A$-module with basis $(\xi_i)_{i=1}^{\infty}$, we can define a correspondence with the left action $a\ldotp \xi_i = \xi_i\ldotp \alpha(a)$. Note that $A\cap \mathbb{K}(\mathcal{H}) = \{0\}$, so Remark \ref{Cuntz=Pimsner} and the $KK$-equivalence of $A$ and $\mathcal{T}(\mathcal{H})$ imply $A$ is $KK$-equivalent to $\mathcal{O}(\mathcal{H})$. Moreover, $\dim_{\text{Rok}}(\mathcal{H}) < \infty$, so $\mathcal{O}(\mathcal{H})$ is a simple, separable $C^*$-algebra with finite nuclear dimension. Since the creation operator associated to $\xi_1$ is a proper isometry, $\mathcal{O}(\mathcal{H})$ must be a Kirchberg algebra. Invoking Kirchberg--Phillips classification, we conclude that $\mathcal{O}(\mathcal{H})\cong A$. 
\end{example}

We end this paper by pointing out that our work does not recapture the interesting examples of classifiable Cuntz--Pimsner algebras constructed by Kumjian in \cite{Kumjian}. Indeed, it can be shown that they don't have finite Rokhlin dimension. 

\bibliographystyle{amsplain}
\providecommand{\bysame}{\leavevmode\hbox to3em{\hrulefill}\thinspace}

\end{document}